\numberwithin{equation}{section}
\newtheorem{theorem}{Theorem}[section]
\newtheorem{cor}[theorem]{Corollary}
\newtheorem{proposition}[theorem]{Proposition}
\newtheorem{prop}[theorem]{Proposition}
\newtheorem{lemma}[theorem]{Lemma}
\theoremstyle{definition}
\newtheorem{defi}{Definition}[section]
\newtheorem{rem}{Remark}[section]
\newtheorem{example}[defi]{Example}
\newtheorem*{thma}{Theorem}
\newcommand\half{\frac{1}{2}}
\newcommand\ov{\overline}
\newcommand\be{\beta}
\newcommand\g{\mathfrak g}
\newcommand\ga{\widehat{\mathfrak g}}
\newcommand\h{\mathfrak h}
\newcommand\D{\Delta}
\renewcommand\l{\lambda}
\newcommand\Dp{\Delta^+}
\renewcommand\d{\delta}
\renewcommand\a{\alpha}
\renewcommand\aa{\mathfrak a}
\renewcommand\k{\mathfrak k}
\newcommand{\Z}{\mathbb Z}
\renewcommand\i{{\mathfrak i}}
\newcommand\ganz{\mathbb Z}
\renewcommand\L{\Lambda}
\renewcommand\aa{\mathfrak a}
\newcommand\C{\mathbb C}
\newcommand\p{\mathfrak p}
\newcommand{\vac}{{\bf 1}}
\newcommand{\bea}{\begin{eqnarray}}
\newcommand{\eea}{\end{eqnarray}}
\begin{document}
\title[Affine conformal embeddings and applications]{On the classification  of non-equal rank affine conformal embeddings and applications}
\author[Adamovi\'c, Kac, M\"oseneder, Papi, Per\v{s}e]{Dra{\v z}en~Adamovi\'c}
\author[]{Victor~G. Kac}
\author[]{Pierluigi M\"oseneder Frajria}
\author[]{Paolo  Papi}
\author[]{Ozren  Per\v{s}e}
\keywords{conformal embedding, vertex operator algebra, non-equal rank subalgebra, Howe dual pairs, $q$-series identity}
\subjclass[2010]{Primary    17B69; Secondary 17B20, 17B65}
\begin{abstract} We complete the classification of conformal embeddings of   a maximally reductive subalgebra $\k$ into a simple Lie algebra $\g$ at non-integrable non-critical levels $k$ by dealing with the case when $\k$ has rank less than that of $\g$. 
 We describe some remarkable instances of decomposition of the vertex algebra $V_{k}(\g)$ as a module for the vertex subalgebra generated by $\k$. We discuss decompositions of conformal embeddings and constructions of new affine  Howe dual pairs at negative levels.
 In particular, we study an example of  conformal embeddings $A_1 \times A_1    \hookrightarrow  C_3$ at level $k=-1/2$, and obtain explicit branching rules by applying certain $q$-series identity. In the analysis of conformal embedding $A_1 \times D_4   \hookrightarrow  C_8$ at level $k=-1/2$ we detect subsingular vectors which do not appear in  the branching rules of the  classical Howe dual  pairs.
\end{abstract}
\maketitle
\centerline{\it To the memory of Bertram Kostant 5/24/1928--2/2/2017}
\section{Introduction}
Let $\g$ be a semisimple finite-dimensional complex Lie algebra and $\k$  a reductive subalgebra of $\g$. The embedding $\k\hookrightarrow \g$ is  called conformal    if the central charge of the 
Sugawara construction for
the affinization $\ga$,  acting on an  integrable $\ga$--module of level $k$, equals that for $\widehat{\k}$. Then necessarily $k=1$ \cite{AGO}.
Maximal conformal embeddings were classified
in \cite{SW},  \cite{AGO}, and related decompositions can be found in \cite{KWW}, \cite{KS}, \cite{CKMP}.
 In the  vertex algebra framework the definition can be rephrased as follows: the simple affine vertex  algebras $V_1(\g)$ and the vertex subalgebra  generated by $\{x_{(-1)}\vac\mid x\in\k\}$ have the same
Sugawara conformal vector.\par
Let us denote by $\widetilde V(k,\k)$  the vertex subalgebra  of $V_k(\g)$  generated by $\{x_{(-1)}\vac\mid x\in\k\}$. In \cite{AKMPP1} we generalized the previous situation to study when the simple affine vertex  algebra $V_k(\g)$ and its subalgebra  $\widetilde V(k,\k)$ have the same
Sugawara conformal vector  for some  non-critical level $k$, not necessarily $1$, assuming  $\k$ to be  an equal rank reductive subalgebra. We also considered the problem of 
 providing the explicit decomposition of $V_k(\g)$ regarded as a $\widetilde V(k,\k)$--module.\par
 The present paper is divided into two parts.  In the first part (Sections 3--5) we  deal with the classification problem in the non-equal rank case: in particular, we give the complete classification of conformal embeddings 
 when $\k$ is a maximal non-equal rank  semisimple  subalgebra of $\g$.
  In the second part (Sections 7--11) we discuss  some instances of the decomposition problem that have  interesting applications, such as a representation theoretic interpretation of an $\eta$-function identity and  the emergence of   new Howe dual pairs. 
  Section 6 combines the results from Sections 3--5 and those of  \cite{AKMPP1}  to obtain  the classification of conformal embeddings of maximally reductive subalgebras (cf. Definition \ref{mr}) in simple Lie algebras: see Theorem \ref{global}.\par
  
 We should mention that the   some of the affine vertex algebras  occurring in our    analysis of conformal embeddings, have appeared recently in various mathematics and physics papers
on simple affine vertex algebras associated with the Deligne exceptional series at levels $k =-h^{\vee}/ 6 -1$ \cite{ArK}, \cite{ArM-I}, \cite{ArM-II}, \cite{Ga}.

  %
  %
   \subsection{Classification of conformal embeddings}    We now discuss the methods employed in our solution of the classification problem. Our main tool is a criterion given in \cite{A} for conformal embeddings (see Subsection \ref{APC}), referred to in the following as the {\it AP-criterion}. As explained in Subsection \ref{classDyn}, the classification of maximally reductive subalgebras reduces  to Dynkin's classification of maximal semisimple subalgebras of a simple Lie algebra. This classification splits these subalgebras in some classes. For each of these classes, we develop methods to enforce the AP-criterion.
\par
In Section 3, we discuss 
  the conformal embeddings of $\widetilde V(k,\k)$ in  $V_k(\g)$ with  $\g=so(V), sp(V),$ $ sl(V)$, and $V$ irreducible as a representation of $\k$. Each of these cases requires similar but not uniform approaches. 
  In the case of $\g=so(V)$, we use Kostant's theory of pairs of Lie type \cite{Kost}.  Kostant found a  condition in terms of the Clifford algebra $Cl(V)$ for $\k\oplus V$ to have a Lie algebra structure with natural  properties.
Reformulating the Symmetric Space Theorem \cite{GNO} in terms of  pairs of Lie type, we find in Proposition \ref{strong} a very strong condition for the existence of conformal embeddings at level $k\ne 1$. It turns out  that  all but two cases are ruled  out (cf. Proposition \ref{ce}):


\begin{theorem}\label{class-intr}
Assume that $\widetilde V(k,\k)$ embeds conformally in $V_k(so(V))$ such that  $k\not\in \ganz_+$ and $V$ is an irreducible $\k$--module.
Then $k=-2$, and  we are in one of  the following cases 
 \begin{itemize}
\item $\k= B_3$, $ V= L_{B_3} (\omega_3)$, conformal embedding   of $V_{-2} (B_3)$ into $V_{-2} (D_4) = V_{-2} (so(V))$.
\item  $\k= G_2$, $V= L_{G_2} (\omega_1)$, conformal embedding    of  $V_{-2} (G_2)$ into $V_{-2} (B_3) = V_{-2} (so(V))$.
 \end{itemize} 
\end{theorem}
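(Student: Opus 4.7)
The plan is to apply the AP-criterion of Adamovi\'c--Per\v{s}e to the decomposition $so(V)=\k\oplus\p$ arising from an irreducible orthogonal representation $V$ of $\k$, and then invoke the structure imposed by Kostant's theory of pairs of Lie type in order to exclude all but the two listed candidates. Since $V$ is $\k$-irreducible and self-dual, the Dynkin index $j_V$ of $\k\hookrightarrow so(V)$ is determined by $V$, and the restriction to each simple ideal of $\k$ of the level-$k$ structure on $so(V)$ is forced to be $j_V k$. The AP-criterion then reduces conformality to a system of scalar equations relating the quadratic Casimir of $\k$ acting on $V$, the Dynkin index $j_V$, and the dual Coxeter numbers.

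The key structural input is Kostant's characterization of pairs $(\k,V)$ of Lie type: namely, $\k\oplus V$ carries a Lie algebra structure extending $\k$ and with $\k$-invariant bracket $\Lambda^2 V\to\k$ exactly when a Clifford-algebraic identity holds for the ``cubic Dirac'' element. The Symmetric Space Theorem of Goddard--Nahm--Olive handles conformal embeddings at $k=1$ precisely in terms of symmetric pair decompositions. The step I would carry out next is to reformulate that theorem intrinsically inside $so(V)$: if $\widetilde V(k,\k)$ embeds conformally in $V_k(so(V))$ with $V$ irreducible and $k\ne 1$, then the AP-equations force $(\k,V)$ to be of Lie type and to satisfy a strengthened version of the symmetric pair relation. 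This is what I envision as the content of Proposition \ref{strong}, and it is the main technical obstacle: one has to carefully separate out the irreducible $\k$-components of $\Lambda^2 V$ and match their Casimir scalars against the singular-vector datum provided by the AP-criterion, without being able to appeal directly to the $k=1$ theorem.

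With Proposition \ref{strong} in hand, the classification becomes an arithmetic check run over Dynkin's tables of irreducible orthogonal (equivalently, self-dual real-type) representations of simple Lie algebras $\k$. For each such $(\k,V)$ one extracts $j_V$ and the Casimir eigenvalue $c_V$, substitutes into the strong condition, and sees whether a non-integrable non-critical level $k$ can solve it. The expectation is that the combined Lie-type and AP-compatibility requirements are so stringent that solutions exist only when $V$ appears as the ``non-compact part'' of a classical symmetric pair of rank defect one, and that the Dynkin index then pins the level to $k=-2$. The surviving solutions are precisely the $7$-dimensional spin representation of $B_3$ sitting inside $D_4=so(V)$ and the $7$-dimensional fundamental of $G_2$ sitting inside $B_3=so(V)$, matching the statement.

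The routine part is the tabulation and the scalar arithmetic; the delicate part, and the place where I expect to spend real effort, is establishing Proposition \ref{strong}, i.e.\ showing that the AP-criterion at $k\ne 1$ already implies the Lie-type/symmetric-pair geometry that makes the enumeration finite. Once that reduction is in place, the proof of Theorem \ref{class-intr} reduces to checking that the two listed pairs do satisfy the AP-criterion at $k=-2$ (a direct computation of the Sugawara central charges of $V_{-2}(B_3)\subset V_{-2}(D_4)$ and $V_{-2}(G_2)\subset V_{-2}(B_3)$) and that no other entry on Dynkin's list passes the test.
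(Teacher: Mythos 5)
Your overall skeleton (AP-criterion plus Kostant's pairs of Lie type, with Proposition \ref{strong} as the crux, followed by an arithmetic enumeration) matches the paper, but two essential mechanisms are missing or misstated, and without them the argument does not close.

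First, the logical role of Proposition \ref{strong} is the reverse of what you describe. The paper proves that a conformal embedding at \emph{integrable} level exists if and only if $\sum_i \tau^{-1}(X_i)\wedge\tau^{-1}(X_i)=0$ in $\bigwedge V\cong Cl(V)$; consequently, at $k\not\in\ganz_+$ this quartic element is \emph{nonzero}. You instead claim that $k\ne 1$ forces $(\k,V)$ to be of Lie type, which is not the mechanism and would not by itself produce a level. The actual derivation of $k=-2$ is an explicit Clifford/exterior-algebra computation (Lemma \ref{critKost}) showing that $\sum_i \tau^{-1}(X_i)\wedge\tau^{-1}(X_i)$ equals $\tfrac{2g_1-\gamma}{4}$ times itself; nonvanishing then forces $2g_1-\gamma=4$, and the AP-criterion $\gamma/(2(k+g_1))=1$ gives $k=(\gamma-2g_1)/2=-2$. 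Your proposal has no substitute for this self-consistency identity, so the level is not actually pinned down.

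Second, your enumeration ``over Dynkin's tables of irreducible orthogonal representations'' is not finite as stated. The paper makes the search finite by constraining the irreducible components of $\p=so(V)\ominus\k$, not $V$ itself: the relation $2g_1-\gamma=4$ gives $\l_1^{tr_V}-\l_2^{tr_V}=2$, whence by Lemma \ref{lemmat} every component $L(\mu)$ of $\p$ has Dynkin index less than $1$; combined with the observation that all weights of $\bigwedge^2 V$ lie in the root lattice, the Andreev--Vinberg--Elashvili table reduces $\mu$ to a single fundamental weight for each of the types $B_n$, $C_n$, $F_4$, $G_2$ (Table 1), and dimension counting in $so(V)=\k\oplus pL(\mu)$ then eliminates everything except $(B3)_{\omega_3}$ and $(G2)_{\omega_1}$. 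Minor point: the surviving $B_3$ case is the $8$-dimensional spin module inside $so(8)=D_4$, not a $7$-dimensional one.
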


The same ideas are employed in the case of embeddings in $sp(V)$, where  pairs of Lie type are  substituted by Lie superalgebras of Riemannian type \cite{Kostsuper} and the Clifford algebra of $V$ is substituted by the symmetric algebra $S(V)$. The final outcome is contained in Proposition \ref{cesp}; note that in this case we have to use genuine ``super'' techniques, as well as 
Kac's classification of simple Lie superalgebras. 
The embeddings in $sl(V)$ are dealt with by adapting Kostant's ideas to the algebra $End(V)$ and the corresponding classification appears in Proposition \ref{cesl}.     \par
In Section 4, we deal with conformal embeddings of $\widetilde V(k,\k)$ in  $V_k(\g)$ with $\g$ classical and $\k$ semisimple non-simple. In this case the verification of the conditions 
of the AP-criterion is performed by using Classical Invariant Theory (cf. Theorem \ref{thm1.1}). It is worthwhile to remark that, as a consequence of our analysis, we are able to provide examples 
of conformal embeddings $\widetilde V(k,\k)\subset V_k(\g)$ with $V_k(\g)$ non semisimple as a $\widetilde V(k,\k)$--module: see Example \ref{ex-non-simp} and Theorem \ref{thm-simplicity}.\par
Section 5 is devoted to a direct analysis of conformal embeddings   in  $V_k(\g)$ with $\g$ of exceptional type.\par

 \subsection{Decomposition of embeddings}
 The decomposition problem for conformal embeddings was  studied in our previous paper  \cite{AKMPP1} for equal rank affine embeddings,  and in \cite{AKMPP2} for embeddings of affine vertex algebras into some $\mathcal{W}$--algebras. Quite surprisingly, in handling decompositions of conformal embeddings for non-equal rank subalgebras we found  completely new phenomena: 
hence  new ideas for a general approach will be required; here we analyze some special cases.
 Even in integrable cases such decompositions are related with some non-trivial results on symmetric spaces \cite{CKMP}, the theory of simple-current extensions \cite{KMPX}, and the  affine tensor categories and rank-level duality \cite{Ost}. In non-integrable cases, the decomposition problem  is very difficult, since most of the tools used in integrable cases do not apply.
 One reason, as noted above,  is that  decompositions  in   non-integrable cases need not to be  semisimple.
 In this paper we study in detail the decomposition of the Weyl vertex algebra $M_{(m)}$ as a $V^ {-m/2}(sl(2)) \otimes   V^{-2}(so(m))$--module, which exhibits an infinite-dimensional  generalization  of  classical   results due to Howe \cite[Section 4]{H-Tams}.  As a byproduct, we  get the realization of the simple affine vertex algebra $V_{-2} (A_3)$ (investigated in  \cite{ArM-II}) inside of the Weyl vertex algebra $M_{(6)}$.   Theorem  \ref{thm-simplicity} implies the following result.
 
 \begin{proposition}
 There exists a non-trivial homomorphism $\Phi : V_{-2} (A_3) \rightarrow M_{(6)}$. 
 \end{proposition}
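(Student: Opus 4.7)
The plan is to exploit the exceptional isomorphism $A_3 \cong D_3$, so that $V^{-2}(A_3)$ coincides with $V^{-2}(so(6))$, and then to extract $\Phi$ from the Howe dual pair realization of $so(6)$ inside the Weyl vertex algebra $M_{(6)}$.

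First, I would invoke the realization of $M_{(m)}$ as a module over $V^{-m/2}(sl(2)) \otimes V^{-2}(so(m))$ discussed just above the proposition. Concretely, this rests on an explicit vertex algebra homomorphism $\Psi : V^{-2}(so(m)) \to M_{(m)}$ obtained by expressing a basis of $so(m)$ as quadratic normally ordered polynomials in the symplectic bosons $a^i, b^i$, $1 \le i \le m$; a direct OPE computation then pins down the level as $-2$. Specializing to $m=6$ and transporting along the isomorphism $so(6) \cong sl(4)$, one obtains a non-trivial homomorphism $\widetilde\Psi : V^{-2}(A_3) \to M_{(6)}$.

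The key step is to show that $\widetilde\Psi$ factors through the simple quotient $V_{-2}(A_3)$. This is precisely what Theorem \ref{thm-simplicity} delivers: the vertex subalgebra of $M_{(6)}$ generated by the image of $\widetilde\Psi$ is isomorphic to the simple affine vertex algebra $V_{-2}(A_3)$, or equivalently the kernel of $\widetilde\Psi$ contains the maximal proper graded ideal of $V^{-2}(A_3)$. Granted this, $\widetilde\Psi$ descends to a well-defined homomorphism $\Phi : V_{-2}(A_3) \to M_{(6)}$, which is non-trivial and in fact automatically injective by simplicity of the source.

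The main obstacle lies entirely in the simplicity assertion supplied by Theorem \ref{thm-simplicity}; constructing $\Psi$ and identifying $A_3$ with $D_3$ are routine. The substantive input is to control the singular vectors in $V^{-2}(A_3)$ at this level and to verify that each of them is annihilated by $\widetilde\Psi$, which the Howe dual pair decomposition of $M_{(6)}$ makes tractable by providing an explicit branching that forbids any non-trivial submodule of the image from having lowest weight outside the simple quotient.
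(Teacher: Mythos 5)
Your proposal is correct and follows essentially the paper's own route: realize $\widetilde V_{-2}(so(6))=\widetilde V_{-2}(D_3)$ inside $V_{-1/2}(C_6)\subset M_{(6)}$ via the conformal embedding $sl(2)\times so(6)\hookrightarrow sp(12)$ at level $-1/2$, identify $D_3$ with $A_3$, and invoke Theorem \ref{thm-simplicity}(2) to conclude that the subalgebra generated by the image is already the simple quotient $V_{-2}(A_3)$, so the map descends and is injective. The only inaccuracy is your closing sketch of how that simplicity is actually established: the paper does not argue via the Howe branching of $M_{(6)}$, but via the fact that the maximal ideal of $V^{-2}(D_3)$ is generated by a unique singular vector of conformal weight $2$ together with the general result of \cite{A} that in a conformal embedding (here $V_{-2}(A_2)\otimes M(1)\hookrightarrow \widetilde V_{-2}(D_3)$, using Gorelik--Kac simplicity of $V^{-2}(A_2)$) any singular vector of conformal weight $2$ must vanish.
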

 
 We hope that this realization can be used to verify  the fusion-rules conjecture   for $V_{-2} (A_3) $--modules from the category $KL_{-2}$  presented in \cite{ArM-II}.
 
 Moreover, in Section \ref{analysis-II} we identify all singular vectors in $M_{(m)}$ which correspond to singular  vectors obtained  using classical invariant theory. The special  case $m=8$ is further studied in Section \ref{m8}, where we show that $M_{(8)}$ has subsingular vectors which do not appear in the classical case: see Proposition \ref{prop-new}.

  \subsection{A connection with Howe dual pairs}
   The vertex subalgebra $\widetilde V(k,\k)$ of $V_k(\g)$ usually has the form  $\widetilde V(k,\k_1) \otimes \widetilde V(k,\k_2) $ for certain simple Lie algebras $\k_i$, and it is natural to consider the commutant (=coset) vertex algebra $\mbox{Com} (\widetilde V(k,\k_i), {V}_k(\g))$.  The determination of commutants is  slightly easier   than the problem of  finding explicit decompositions, nevertheless it gives relevant information on the structure of embeddings. If  
   $$\mbox{Com} (\widetilde V(k,\k_1), {V}_k(\g)) = \widetilde V(k,\k_2), \quad  \mbox{Com} (\widetilde V(k,\k_2), {V}_k(\g)) = \widetilde V(k,\k_1), $$ 
   we say that  $\widetilde V(k,\k_1)$ and  $\widetilde V(k,\k_2)$ are   an affine Howe dual pair inside of $V_k(\g)$ .
   
   In the vertex algebra setting the commutant problem and  Howe dual pairs were  extensively studied by A. Linshaw and collaborators (see  \cite{LS}, \cite{LSS} and reference therein). We have noticed that the results from \cite{LSS}  can be applied to  conformal embeddings from  our paper.   In  Proposition  \ref{g-1}   we prove the following.
 \begin{theorem} For $m\ge 3$, we have 
\begin{equation*}Com(V_{-m } (sl(2)), M_{(2 m)} ) = \widetilde V_{-2} (so(2 m)), \end{equation*}
where  $\widetilde V_{-2} (so(2 m))$ is a certain quotient of $V^{-2} (so(2m))$.

The vertex algebra $V^{-2} (so(2m))$ is  simple  if and only if $m=3$.
\end{theorem}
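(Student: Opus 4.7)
The plan is to exploit the classical Howe dual pair $(\mathrm{Sp}(2), \mathrm{O}(2m))$ at the vertex algebra level. The Weyl vertex algebra $M_{(2m)}$ carries commuting actions from a natural homomorphism $V^{-m}(sl(2))\otimes V^{-2}(so(2m)) \to M_{(2m)}$ arising from the conformal embedding $sl(2)\oplus so(2m)\hookrightarrow sp(4m)$ at level $1$. The first factor maps onto the simple quotient $V_{-m}(sl(2))$; the image of the second factor is, by definition, $\widetilde V_{-2}(so(2m))$. In particular both inclusions $V_{-m}(sl(2))\subseteq M_{(2m)}$ and $\widetilde V_{-2}(so(2m))\subseteq M_{(2m)}$ lie in each other's commutant.

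For the coset equality $\mbox{Com}(V_{-m}(sl(2)), M_{(2m)}) = \widetilde V_{-2}(so(2m))$, I would invoke the results of Linshaw and collaborators \cite{LSS} developed for exactly the symplectic-orthogonal dual pair inside a free field algebra. The key input is that the $sl(2)$-invariant subalgebra of $M_{(2m)}$ is strongly generated by its weight-$1$ part, i.e. by the $so(2m)$-currents; this identifies the coset with $\widetilde V_{-2}(so(2m))$, while the reverse inclusion follows at once from commutation of the two actions. One should check that the hypotheses of the relevant theorem from \cite{LSS} are satisfied in our range $m\ge 3$, which amounts to verifying that $V_{-m}(sl(2))$ indeed appears as the simple quotient in the image.

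For the simplicity dichotomy: at $m=3$, the exceptional isomorphism $so(6)\cong sl(4)$ identifies $V^{-2}(so(6))$ with $V^{-2}(A_3)$, whose simplicity at level $-2=-h^\vee/2$ is proved in \cite{ArM-II}. For $m\ge 4$, I would exhibit a nontrivial singular vector in $V^{-2}(so(2m))$ by the following mechanism: classical Howe duality restricts the $so(2m)$-highest weights appearing in $M_{(2m)}$ to those supported on $\{\omega_1,\omega_2\}$, so irreducible $so(2m)$-modules with a $\omega_j$-component for $j\ge 3$ cannot occur in the image of the universal affine vertex algebra $V^{-2}(so(2m))$. Lifting this constraint to a relation in the vacuum Verma module at level $-2$ produces the required singular vector, which maps to zero in $M_{(2m)}$ but is nonzero in $V^{-2}(so(2m))$ as soon as $m\ge 4$.

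The main obstacle is the explicit identification of the singular vector above, together with the verification that the analogous construction is vacuous precisely at $m=3$ (where it would have to reduce to a relation already holding in $V^{-2}(A_3)$); this is a small-rank coincidence that must be extracted carefully from the $D_3\cong A_3$ geometry and from the structural analysis of \cite{ArM-II}. A clean way to organize the computation is to compare the Joseph-ideal relations for $so(2m)$ at level $-2$ with the $sl(4)$-Weyl module structure at $k+h^\vee=2$, verifying that they collapse only in the $m=3$ case.
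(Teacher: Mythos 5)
The coset half of your proposal is essentially the paper's own argument: Proposition \ref{g-1} likewise reduces to \cite[Theorem 4.3]{LSS}, which presents the $sl_2[t]$-invariants of the $\beta\gamma$-system by weight-one generators, and then identifies those generators with the $so(2m)$-currents under an explicit symplectic isomorphism $\Phi$. That part is sound (up to the minor slip that the relevant conformal embedding $sl(2)\times so(2m)\hookrightarrow sp(4m)$ is used at level $-1/2$, not $1$).

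The simplicity dichotomy is where the proposal breaks down, in both directions. For $m=3$ you assert that the simplicity of $V^{-2}(A_3)$ at level $-2$ is proved in \cite{ArM-II}; this is the opposite of what that paper shows. $V^{-2}(A_3)\cong V^{-2}(D_3)$ is \emph{not} simple: \cite[Theorem 8.2]{ArM-II} exhibits a singular vector of conformal weight $2$ generating its maximal ideal. (The statement is really about the quotient $\widetilde V_{-2}(so(2m))$, as in Theorem \ref{thm-simplicity}; read literally with the universal algebra it would be false even at $m=3$.) The actual content of the $m=3$ case is that this weight-$2$ singular vector dies in the particular quotient $\widetilde V_{-2}(D_3)\subset M_{(6)}$, which the paper deduces from the conformal embedding $V_{-2}(A_2)\otimes M(1)\hookrightarrow \widetilde V_{-2}(D_3)$ (with $V^{-2}(A_2)$ simple by Gorelik--Kac) together with the fact from \cite{A} that in a conformal embedding any singular vector of conformal weight $2$ must vanish. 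Your proposal does not engage with this step at all.

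For $m\ge 4$ your strategy proves the wrong statement. Exhibiting a singular vector of $V^{-2}(so(2m))$ that maps to zero in $M_{(2m)}$ shows only that $\widetilde V_{-2}(so(2m))$ is a \emph{proper} quotient of the universal affine vertex algebra; it says nothing about whether that quotient is itself simple --- if anything, killing more singular vectors pushes the image toward the simple quotient. Non-simplicity means $\widetilde V_{-2}(so(2m))\ne V_{-2}(so(2m))$, i.e., some \emph{nonzero} element of the image generates a proper ideal. The paper establishes this by counting representations: $V_{-2}(D_m)$ has only finitely many irreducible modules in category $\mathcal O$ by \cite{ArM-I}, \cite{ArM-III}, whereas Lemma \ref{podmodul} produces infinitely many inequivalent highest weight $\widetilde V_{-2}(D_m)$-modules inside $M_{(2m)}$, so the two algebras cannot coincide. (Section \ref{m8} also exhibits, for $m=4$, an explicit nonzero singular vector $P^+$ lying \emph{inside} $\widetilde V_{-2}(D_4)$ --- the kind of object your argument would actually need.) As written, your plan for $m\ge 4$ would not close the argument.
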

   
   Then, by combining methods from \cite{LSS} and   \cite{AKMPP1}, \cite{AKMPP2} we are able to construct  new examples of affine Howe dual pairs in   Corollary \ref{cor-11}.
   
 \begin{theorem}   The following pairs of vertex algebras are affine Howe dual pairs:
\begin{itemize}
\item[(1)] $V_{-m} (sl(2))$ and $\widetilde V_{-2} (D_m)$ inside  $V_{-1/2} (C_{2m})$ for $m \ge 3$.
\item[(2)] $V_{-m} (sl(2))$ and $\overline V_{-2} (A_{m-1})$ inside  $V_{-1} (A_{2m-1})$ for $m \ge 5$.
\end{itemize}
\end{theorem}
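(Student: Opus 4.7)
The strategy is to reduce each assertion to commutant computations in a larger free-field algebra and then restrict to the conformal subalgebra of interest. For part (1), I would work inside the Weyl vertex algebra $M_{(2m)}$, which contains $V_{-1/2}(C_{2m})$ via the standard symplectic realization. Proposition~\ref{g-1} already provides half of the Howe pair in the larger ambient:
\[
\mbox{Com}(V_{-m}(sl(2)), M_{(2m)}) = \widetilde V_{-2}(so(2m)).
\]
Since the $so(2m)$-generating currents appearing on the right lie inside $V_{-1/2}(C_{2m})$, the vertex subalgebra $\widetilde V_{-2}(so(2m)) \subset M_{(2m)}$ coincides with $\widetilde V_{-2}(D_m) \subset V_{-1/2}(C_{2m})$. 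Intersecting the displayed identity with $V_{-1/2}(C_{2m})$ then yields
\[
\mbox{Com}(V_{-m}(sl(2)), V_{-1/2}(C_{2m})) = \widetilde V_{-2}(D_m).
\]

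For the reverse direction, I would first establish the dual Howe identity $\mbox{Com}(\widetilde V_{-2}(so(2m)), M_{(2m)}) = V_{-m}(sl(2))$ inside $M_{(2m)}$, using the vertex-algebraic commutant methods of \cite{LS} and \cite{LSS} applied to the rank-$2m$ Weyl algebra; this is the affine lift of the classical $(sl(2), so(2m))$ dual pair on the polynomial Fock space. Intersecting this identity with $V_{-1/2}(C_{2m})$ and observing that $V_{-m}(sl(2))$ is by construction a subalgebra of $V_{-1/2}(C_{2m})$ (it is a tensor factor of $\widetilde V(-1/2, A_1\times D_m)$) produces the complementary commutant. Part (2) is proved by the same template: replace $M_{(2m)}$ by the free-field algebra carrying the classical $(sl(2), sl(m))$ Howe pair (a suitable $bc$- or $\beta\gamma$-system, together with a Heisenberg factor accommodating the $gl(1)$ center), prove the analogue of Proposition~\ref{g-1} identifying the commutant of $V_{-m}(sl(2))$ with $\overline V_{-2}(A_{m-1})$, and then restrict to $V_{-1}(A_{2m-1})$ using the conformality of $A_1\times A_{m-1}\hookrightarrow A_{2m-1}$ at level $-1$ obtained in the first part of the paper.

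The main obstacle is the dual direction in each pair, i.e.~upgrading the automatic inclusion $V_{-m}(sl(2)) \subseteq \mbox{Com}(\widetilde V_{-2}(D_m), V_{-1/2}(C_{2m}))$, and its analogue for (2), to an equality. This requires genuine control over the joint decomposition of the ambient free-field algebra as a $V_{-m}(sl(2)) \otimes \widetilde V_{-2}(so(2m))$-bimodule. The conformality identity $\omega_{V_{-1/2}(C_{2m})} = \omega_{V_{-m}(sl(2))} + \omega_{\widetilde V_{-2}(D_m)}$ forces the coset Virasoro to coincide with the Sugawara Virasoro of $V_{-m}(sl(2))$ and supplies a first necessary constraint, but the actual equality of commutants relies on combining the affine Howe machinery of \cite{LSS} with the simplicity techniques developed in \cite{AKMPP1} and \cite{AKMPP2}. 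The hypotheses $m\geq 3$ in (1) and $m\geq 5$ in (2) enter precisely in ensuring that the relevant quotients $\widetilde V_{-2}(D_m)$ and $\overline V_{-2}(A_{m-1})$ are the appropriate simple quotients on which this machinery applies.
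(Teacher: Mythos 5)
Your treatment of part (1) is essentially the paper's argument: the identity $\mbox{Com}(V_{-m}(sl(2)),M_{(2m)})=\widetilde V_{-2}(so(2m))$ of Proposition \ref{g-1} is the whole content, and since the generators of $\widetilde V_{-2}(so(2m))$ lie in the even subalgebra $V_{-1/2}(C_{2m})\subset M_{(2m)}$, intersecting gives the commutant inside $V_{-1/2}(C_{2m})$. Like the paper, you leave the dual identity $\mbox{Com}(\widetilde V_{-2}(D_m),V_{-1/2}(C_{2m}))=V_{-m}(sl(2))$ at the level of an assertion that the same machinery applies, so this is not where you diverge.

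The genuine gap is in part (2). You propose to redo the Linshaw--Schwarz--Song computation in ``the free-field algebra carrying the classical $(sl(2),sl(m))$ Howe pair'' and then restrict to $V_{-1}(A_{2m-1})$. But that free-field algebra is again $\mathcal{S}(\C^2\otimes\C^m)\cong M_{(2m)}$, and $V_{-1}(A_{2m-1})\otimes M(1)$ sits inside it only as the charge-zero component; the commutant of $V_{-m}(sl(2))$ computed there is the full $\widetilde V_{-2}(D_m)$, whose extra generators $D_{i,j},D'_{i,j}$ carry charge $\pm2$ and disappear upon restriction, while charge-zero normally ordered combinations of them a priori survive. The missing step is precisely to show that the charge-zero piece $\widetilde V_{-2}(D_m)^{(0)}$ is exactly $\overline V_{-2}(A_{m-1})\otimes M(1)$, i.e.\ that such combinations produce nothing outside the subalgebra generated by $gl(m)$. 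The paper gets this from Corollary \ref{cor-branching} (via Proposition \ref{refine} and the fusion-rules argument of \cite{AKMPP1}): $\widetilde V_{-2}(D_m)=\sum_{\ell}\widetilde V_{-2}(D_m)^{(\ell)}$ with each graded piece a highest weight, hence for $\ell=0$ vacuum-cyclic, $\overline V_{-2}(A_{m-1})\otimes M(1)$--module. That is also where $m\ge5$ enters. Your closing explanation of the hypotheses is therefore off: $m\ge3$ and $m\ge5$ do not ensure that $\widetilde V_{-2}(D_m)$ and $\overline V_{-2}(A_{m-1})$ are ``the appropriate simple quotients'' --- Theorem \ref{thm-simplicity}(2) shows $\widetilde V_{-2}(D_m)$ is \emph{not} simple for $m\ge4$, and Section \ref{m8} notes $\overline V_{-2}(A_3)\ne V_{-2}(A_3)$; the constraint $m\ge3$ in (1) only avoids the critical level $-2$ for $sl(2)$ and degenerate $D_m$, while $m\ge5$ in (2) comes from the branching result just described.
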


 The case $m=4$ is related   with a recent physics conjecture of D. Gaiotto \cite{Ga} (cf. Remark \ref{rem-81}).

  \subsection{Examples of branching rules }
   Even in the cases when   the decomposition is semi-simple,  $V_k(\g)$ is not a simple current extension of $\widetilde V(k,\k)$, and studying such extensions is a very difficult problem in vertex algebra theory.  Nevertheless, in some cases, explicit decompositions can be obtained by using combinatorial/number theoretic methods and applying certain $q$-series identities.  In the present paper we present a decomposition of the vertex algebra $M_{(3)}$ as a $V_{-4}(sl(2)) \otimes V_{-3/2} (sl(2))$--module. We identify all singular vectors in $M_{(3)}$ and  the characters of  certain irreducible $V_{-4}(sl(2)) \otimes V_{-3/2} (sl(2))$--modules. 
   Then, the decomposition of $M_{(3)}$ is obtained in Theorem \ref{m3-dec} as a consequence of the  $q$-series identity from \cite[Example 5.2]{KW-1994}.
\begin{theorem}   $M_{(3)}$ is a completely reducible $V_{-3/2} (sl(2)) \otimes V_{-4}(sl(2))$--module and the following decomposition holds
\bea  && M_{(3) }=  \bigoplus_{\ell =0} ^{\infty}\left( L_{\widehat{sl(2)}} (- (\frac{3}{2} + \ell  ) \Lambda_0 +  \ell  \Lambda_1) \bigotimes    L_{\widehat{sl(2)}} (- (4 +  2 \ell  ) \Lambda_0 +    2 \ell \Lambda_1)   \label{m3-1} \right).\eea
\end{theorem}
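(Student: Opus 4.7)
The plan is to prove the decomposition by combining three ingredients: (i) explicit construction of joint singular vectors $v_\ell \in M_{(3)}$ of weight $(-(3/2+\ell)\Lambda_0 + \ell\Lambda_1,\, -(4+2\ell)\Lambda_0 + 2\ell\Lambda_1)$ for each $\ell \ge 0$; (ii) a character computation for $M_{(3)}$; and (iii) the $q$-series identity \cite[Example 5.2]{KW-1994} used to match characters.

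First I would use the conformal embedding $A_1 \times A_1 \hookrightarrow C_3$ at level $-1/2$ established earlier in the paper, together with the standard free-field realization $V_{-1/2}(C_3) \hookrightarrow M_{(3)}$, to produce a vertex algebra map $V^{-3/2}(sl(2)) \otimes V^{-4}(sl(2)) \to M_{(3)}$. Checking that the maximal proper ideal of each universal factor acts trivially on $M_{(3)}$ then descends this to the claimed action of $V_{-3/2}(sl(2)) \otimes V_{-4}(sl(2))$; the shift from the nominal $V^{-2}(so(3))$ to $V^{-4}(sl(2))$ is accounted for by the rescaling of the invariant form under the exceptional isomorphism $so(3) \cong sl(2)$.

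Next I would locate singular vectors. Writing the two commuting $sl(2)$-currents in terms of the $\beta\gamma$-generators $a^i,b^i$ of $M_{(3)}$ via the classical Howe dual prescription, the natural candidates for $v_\ell$ arise from $Sp(2)\times O(3)$-covariants on $\CC^2 \otimes \CC^3$ evaluated on the $(-1)$-modes: roughly, the $\ell$-th power of a basic $Sp(2)$-invariant combined with an $SL(3)$-highest-weight covariant. Direct computation of the action of the positive modes of both currents on these polynomial expressions verifies annihilation and pins down the joint weight. The character of $M_{(3)}$ is the standard triple-product coming from the $\beta\gamma$ system; Kac--Wakimoto-type character formulas provide explicit expressions for each summand's character, and \cite[Example 5.2]{KW-1994} is precisely the identity equating $\mathrm{ch}\, M_{(3)}$ with
$$\sum_{\ell \ge 0} \mathrm{ch}\, L_{\widehat{sl(2)}}(-(3/2+\ell)\Lambda_0+\ell\Lambda_1)\cdot \mathrm{ch}\, L_{\widehat{sl(2)}}(-(4+2\ell)\Lambda_0+2\ell\Lambda_1).$$

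The main obstacle will be upgrading this character identity to a genuine direct sum decomposition, i.e.\ establishing complete reducibility. I would argue inductively on the $L_0$-grading: the submodule $U_\ell$ generated by $v_\ell$ surjects onto the claimed simple tensor product, and comparing graded characters of $M_{(3)}$ with those of $\bigoplus_{m\le\ell}U_m$ forces both the absence of further singular vectors at matching weights and that each surjection is an isomorphism; summing over $\ell$ then exhausts $M_{(3)}$. Proving that each $U_\ell$ is actually isomorphic to the simple tensor product (not a strictly larger quotient of Verma-like modules) is the most delicate step and is where the specific representation theory of $\widehat{sl(2)}$ at levels $-3/2$ and $-4$ must intervene, presumably via a localization/screening argument in the free-field realization or an appeal to the simplicity results underlying Theorem \ref{thm-simplicity}.
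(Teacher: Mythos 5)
Your proposal follows essentially the same route as the paper: construct the singular vectors $\varphi_{(-1)}^{\ell}\vac$ via the free-field realization (the paper's Lemma \ref{podmodul}(2), including the level computation $8\cdot(-\tfrac12)=-4$ for the $so(3)\cong sl(2)$ factor), compute the irreducible characters at levels $-3/2$ and $-4$ using the structure of Weyl/generalized Verma modules (Lemma \ref{karakteri}), and then match $\mathrm{ch}\,M_{(3)}$ against the sum of products of these characters via the Kac--Wakimoto identity, which simultaneously forces directness of the sum, irreducibility of each summand, and exhaustion of $M_{(3)}$. The delicate step you flag is resolved in the paper exactly where you predict -- by level-specific representation theory, namely irreducibility of the Weyl modules $V^{-3/2}(\ell\omega_1)$ (via Hamiltonian reduction to Virasoro at $c=-2$) and the explicit singular-vector structure of $V^{-4}(2\ell\omega_1)$.
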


\vskip 5mm
 {\bf Acknowledgment.} This work was done in part during the authors' stay at Erwin Schr\"odin\-ger
Institute in Vienna  (January  2017).  D.A. and O. P. are  partially supported by the Croatian Science Foundation under the project 2634 and by the
QuantiXLie Centre of Excellence, a project cofinanced
by the Croatian Government and European Union
through the European Regional Development Fund - the
Competitiveness and Cohesion Operational Programme
(KK.01.1.1.01).  We thank the referee for his/her careful reading of the paper.
\vskip 5mm


\section{Setup}
\subsection{AP-criterion}\label{APC} Let $\g$ be  a simple Lie algebra. Let $\h$ be a Cartan subalgebra, $\D$ the 
$(\g,\h)$--root system, $\Dp$  a set of positive roots and $\rho$ the corresponding Weyl vector. Let $(\cdot,\cdot)$ denote the   normalized bilinear invariant  form (i.e., $(\a,\a)=2$ for any long root). The dual Coxeter number is denoted by $h^\vee$. This is half the eigenvalue of the Casimir element corresponding to $(\cdot,\cdot)$ when acting on $\g$.
We shall write  $L_\g(\eta)$ to denote the irreducible highest weight $\g$--module of highest weight $\eta$; when clear from the context, we write simply $L(\eta)$. We let  $V^{k}(\g)$ be the universal affine vertex algebra of level $k$ and, if $k+h^\vee\ne0$, we denote by  $V_{k}(\g)$ its simple quotient. We let $L_{\widehat \g}(\Lambda)$ denote the irreducible highest weight $V^k(\g)$--module as well as, when the action pushes down, the corresponding $V_k(\g)$--module.
 The notation $\widetilde L_{\widehat \g}(\Lambda)$ usually denotes  a  highest weight $V^{k}(\g)$--module, not necessary simple, with highest weight $\Lambda$. Similarly $\widetilde V_k(\g)$ or  $\overline V_k(\g)$ usually denote a quotient of $V^k(\g)$ which are  possibly different than $V_k(\g)$.

Assume that $\k$ is a semisimple subalgebra of $\g$. Then $\k$ decomposes as 
\begin{equation*}\label{decompg0}
\k=\k_1\oplus\cdots\oplus \k_t.
\end{equation*}
where 
$\k_1,\ldots \k_t$ are the simple ideals of $\k$. 
Let $\p$ be  the orthocomplement of $\k$ w.r.t to $(\cdot, \cdot)$ and let 
$$\p=\bigoplus_{i=1}^s\bigotimes_{j=1}^{t} L_\k(\mu_i^j)$$ be its  decomposition  as a $\k$--module.   \par Let  $( \cdot, \cdot)_j$ denote the normalized invariant
bilinear form on $\k_j$.  We denote by $\widetilde V(k,\k)$ the vertex subalgebra of $V_k(\g)$ generated by $\{x_{(-1)}\vac\mid x\in\k\}$. Note that $\widetilde V(k,\g)$ is an affine vertex algebra, more precisely it is a quotient of $\otimes V^{k_j}(\k_j)$, with the levels $k_j$ determined by $k$ and the ratio between $(\cdot,\cdot)$ and $(\cdot,\cdot)_j$.

\begin{thma}(AP-criterion) \cite{A} {\it
 $\widetilde V(k,\k)$ is
conformally embedded in $V_k({\g})$ if and only if
\bea\label{numcheck} && \sum_{j=0}^t\frac{ ( \mu_{i}^j , \mu _{i}^j + 2 \rho_0^j ) _j}{ 2 (k_j + h_j
^{\vee} )} = 1  \eea
for any $i=1,\ldots,s$.}
\end{thma}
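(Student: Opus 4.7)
The plan is to detect the conformal embedding condition, i.e., the equality $\omega_\g = \omega_\k$ of the two Sugawara vectors in $V_k(\g)$, by a zero-mode computation on the degree-one generating subspace $\g_{(-1)}\vac$. Here $\omega_\g$ denotes the Sugawara vector of $V_k(\g)$ and $\omega_\k := \sum_{j=1}^t \omega_{\k_j}$ the sum of Sugawara vectors of the simple factors of $\widetilde V(k,\k)$. The cornerstone is the standard fact that a conformal vector in $V_k(\g)$ making every $x_{(-1)}\vac$, $x\in\g$, primary of weight $1$ is unique; thus $\omega_\g = \omega_\k$ if and only if $\omega_\k$ has this property. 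Among the defining conditions for a primary vector of weight $1$, the vanishing of $L_n^\k x_{(-1)}\vac$ for $n\geq 1$ follows from the OPE of $\omega_\k$ with $x$ (which depends only on the projection of $x$ onto $\k$) together with conformal weight considerations, so the substantive condition is $L_0^\k x_{(-1)}\vac = x_{(-1)}\vac$.

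For $x \in \k$ this weight identity is automatic: if $x\in\k_j$, then $L_0^{\k_{j'}}$ annihilates $x_{(-1)}\vac$ for $j'\neq j$, while $L_0^{\k_j} x_{(-1)}\vac = x_{(-1)}\vac$ by the standard Sugawara property for the affine subalgebra generated by $\k_j$. For $x\in\p$, I decompose $\p=\bigoplus_i\bigotimes_j L_\k(\mu_i^j)$ and fix $x$ in the $i$-th summand. Using the Sugawara presentation
\[
\omega_{\k_j} \;=\; \frac{1}{2(k_j+h_j^\vee)}\sum_a :a^{(j)}a^{(j)}:,
\]
with $\{a^{(j)}\}$ running over $(\cdot,\cdot)_j$-dual bases of $\k_j$, a direct OPE computation shows that on the minimal conformal weight subspace of any highest weight $\widehat{\k_j}$-module the operator $L_0^{\k_j}$ acts as the Casimir of $\k_j$ divided by $2(k_j+h_j^\vee)$. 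Applied to $x_{(-1)}\vac$, this gives $L_0^{\k_j}x_{(-1)}\vac = \frac{(\mu_i^j,\mu_i^j+2\rho_0^j)_j}{2(k_j+h_j^\vee)}\, x_{(-1)}\vac$, and summing over $j$ yields
\[
L_0^\k\, x_{(-1)}\vac \;=\; \Bigl(\sum_j \frac{(\mu_i^j,\mu_i^j+2\rho_0^j)_j}{2(k_j+h_j^\vee)}\Bigr) x_{(-1)}\vac.
\]
Equating the eigenvalue to $1$ for each $i$ gives precisely \eqref{numcheck}, and conversely if \eqref{numcheck} holds then $L_0^\k$ agrees with $L_0^\g$ on all generators.

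The main obstacle is the reduction step, i.e., justifying that equality of the two conformal vectors can be detected on $\g_{(-1)}\vac$ alone. The cleanest route is via uniqueness of the Sugawara vector among weight-$2$ vectors making the $\g$-generators primary of weight $1$, which is standard and carried out in detail in \cite{A}. An alternative, more intrinsic viewpoint is to observe that $\omega_\g-\omega_\k$ has conformal weight $2$ and commutes with all of $\widetilde V(k,\k)$, so that under \eqref{numcheck} it becomes a singular vector in the simple quotient $V_k(\g)$ annihilated by all positive modes of $\omega_\g$ and by $L_{-1}^\g$ acting trivially on $\vac$, forcing it to vanish.
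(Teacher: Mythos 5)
Your argument is essentially the standard proof of the AP-criterion: the paper itself does not prove this statement but imports it from \cite{A}, and both your main route (uniqueness of the conformal vector making the currents primary of weight $1$, verified by computing the $L_0^{\k}$-eigenvalue, i.e.\ the normalized Casimir eigenvalues, on each component of $\p_{(-1)}\vac$) and your alternative route (that under \eqref{numcheck} the vector $\omega_{\g}-\omega_{\k}$ lies in the maximal ideal of $V^k(\g)$) coincide with the argument of \cite{A}, the latter being exactly what the paper records in the Remark following the statement. The only imprecision is in your closing sentence: what forces $\omega_{\g}-\omega_{\k}$ to vanish in $V_k(\g)$ is not annihilation by positive Virasoro modes, but the fact that a nonzero vector of conformal weight $2$ killed by $x_{(n)}$ for all $x\in\g$ and $n\ge 0$ (which requires \eqref{numcheck} for $x\in\p$, not merely commutation with $\widetilde V(k,\k)$) would generate a proper nonzero ideal, contradicting simplicity.
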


\begin{rem}
We note that  the AP-criterion allows  conformal embeddings in non-simple vertex algebras. Let $\omega_{\g}$ (resp. $\omega_{\k}$) be the Sugawara Virasoro vector in $V^{k}(\g)$ (resp. ${V}^{k}(\k) $). It was proved in \cite{A} that (\ref{numcheck}) implies that $\omega_{\k} -\omega_{\g} $ belongs to the maximal ideal in $V^k({\g})$. So we automatically have conformal embedding of  $ {\mathcal V}_{k}(\k)$(= certain quotient of $\widetilde V(k,\k)$)  in the vertex algebra $$\frac{V^{k}(\g)}{ V^{k}(\g) .  (\omega_{\k} -\omega_{\g}) },$$
which doesn't need to be simple. In  present paper we identify a  family of conformal embeddings in non-simple vertex algebras (see  Corollaries  \ref{cor-branching} and  \ref{cor-non-simp-2}).
\end{rem}
We reformulate the criterion highlighting  the dependence from the choice of the form $(\cdot,\cdot)$. As invariant symmetric form on $\k$ we choose  $(\cdot,\cdot)_{|\k\times \k}$. Fix an orthonormal basis $\{X_i\}$ of $\k_j$ and let $C_{\k_j}= \sum_i X_i^2$ be the corresponding Casimir operator. Let $2 g_j$ be the eigenvalue for the action of $C_{\k_j}$ on $\k_j$ and $\gamma^j_i$ the eigenvalue of the action of $C_{\k_j}$ on $L(\mu^j_i)$. Then  $ \widetilde{V}_{k}(\k)$ is
conformally embedded in $V_k({\g})$ if and only if
\bea\label{numcheck2} && \sum_{j=0}^t\frac{ \gamma^j_i}{ 2 (k+ g_j )} = 1  \eea
for any $i=1,\ldots,s$.

\begin{cor}\label{Actsscalar}Assume $\k$ is simple, so that $\k=\k_1$. 
Then there is $k\in\C$ such that $\widetilde V(k,\k)$ is conformally embedded in $V_k(\g)$ if and only if $C_\k$ acts scalarly on $\p$.
\end{cor}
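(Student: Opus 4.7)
The plan is to read the reformulated AP-criterion (\ref{numcheck2}) in the simple case and unpack what it requires. Since $\k$ is simple, $t=1$ and the sum in (\ref{numcheck2}) reduces to a single term, so the criterion becomes
$$\gamma_i \;=\; 2(k+g) \qquad \text{for every } i=1,\dots,s,$$
where $2g$ is the eigenvalue of $C_\k$ on $\k$ and $\gamma_i$ is the eigenvalue of $C_\k$ on the irreducible summand $L(\mu_i)$ of $\p$.

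First I would observe that the right-hand side is independent of $i$. Hence a scalar $k\in\C$ solving this system exists if and only if $\gamma_1=\dots=\gamma_s$, in which case $k$ is uniquely determined by $k=\gamma/2-g$, where $\gamma$ denotes the common value. Next I would translate this into the claimed statement about $C_\k$: since $C_\k$ acts on each $\k$-irreducible component $L(\mu_i)$ of $\p$ by the scalar $\gamma_i$, the operator $C_\k$ is a scalar on $\p=\bigoplus_i L(\mu_i)$ precisely when all $\gamma_i$ agree. Chaining the two equivalences yields the corollary.

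There is essentially no analytic obstacle here; the argument is a direct reading of (\ref{numcheck2}). The one bookkeeping point I would want to verify is the degenerate scenario in which the common value $\gamma$ vanishes, which would force $k+g=0$ and make the denominator in (\ref{numcheck2}) illegal. However $\gamma_i=(\mu_i,\mu_i+2\rho_\k)$ is strictly positive for any nonzero dominant weight $\mu_i$, so $\gamma_i=0$ forces $\mu_i=0$; if every summand of $\p$ were trivial then $[\k,\p]=0$, placing $\p$ in the center of the simple Lie algebra $\g$, hence $\p=0$ and $\k=\g$. Outside this trivial situation $\gamma>0$, so $k=\gamma/2-g$ is a legitimate (and in fact uniquely determined) level, which completes the proof.
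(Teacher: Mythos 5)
Your argument is correct and is essentially the paper's proof: both directions are read directly off the reformulated AP-criterion \eqref{numcheck2}, with scalarity of $C_\k$ on $\p$ equivalent to all eigenvalues $\gamma_i$ coinciding, and $k=\gamma/2-g_1$ the resulting level. Your extra check that the common eigenvalue cannot vanish goes beyond what the paper records and is welcome, though the justification should be that $[\k,\p]=0$ would make $\k$ a proper ideal of the simple algebra $\g$ (not that $\p$ lies in the center of $\g$); the conclusion $\p=0$ still follows.
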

\begin{proof}If $\widetilde V(k,\k)$ is conformally embedded in $V_k(\g)$ then, 
by \eqref{numcheck2}, $\gamma_i^1=2(k+g_1)$ is independent of  $i$.
If $C_\k$ acts scalarly on $\p$, then, solving \eqref{numcheck2} for $k$, one finds a level where, by AP-criterion, conformal embedding occurs.
\end{proof}

\subsection{Dynkin index} Recall the notion of Dynkin index of a representation. Let $\g$ be  a simple Lie algebra and  $V$ a finite dimensional $\g$--module. Let  $tr_V$ be the trace form of $V$. It  defines a nondegenerate bilinear symmetric invariant form on $\g$, hence it is a multiple of the Killing form $\kappa$. The Dynkin index $Ind_\g(V)$ of $V$  is the multiplicative factor between the two forms; more precisely 
$$tr_V x^2= Ind_\g(V)\, \kappa(x,x),\quad x\in\g.$$
A  theorem of Dynkin states that if $V$ is an irreducible $\g$-module of highest weight $\mu$ then 
\begin{equation}\label{ind}Ind_\g(V)=\frac{\dim V}{\dim \g}\frac{(\mu,\mu+2\rho)}{(\theta,\theta+2\rho)},\end{equation}
where $(\cdot,\cdot)$ is any  nondegenerate bilinear symmetric invariant form on $\g$ and $\theta$ is the highest root of $\g$.  Let  $C^{(\cdot,\cdot)}_\g=\sum_{i=1}^{\dim\g}X_i^2$ be  the Casimir element corresponding to the form $(\cdot,\cdot)$. Let $\l,\l'$ be the eigenvalues of $C^{(\cdot,\cdot)}_\g$ acting on $V,\g$, respectively. Then 
$
Ind_\g(V)=\frac{\dim V}{\dim \g}\frac{\l}{\l'}.
$
In particular, choosing $(\cdot,\cdot)= tr_V$, we find that $\l=\frac{\dim \g}{\dim V}$.  Hence with obvious notation
\begin{equation}\label{ciserve}
Ind_\g(V)=\frac{1}{(\l')^{tr_V}}.
\end{equation}
Let $V$ be an irreducible $\g$--module; denote by $s(V)$ either $sl(V)$, or $so(V)$ if $V$ admits a nondegenerate bilinear symmetric invariant form.
\begin{lemma}\label{lemmat}Assume that  $C^{tr_V}_\g$ acts with at most  two eigenvalues $\l^{tr_V}_1,\l^{tr_V}_2$ on $s(V)$ according to the eigenspace decomposition $s(V)=\g\oplus\mathfrak p$. 
Assume also that 
\begin{equation}\label{rell} \l^{tr_V}_1>\l^{tr_V}_2+1	\end{equation}
Then $Ind_\g(V)<1$.
\end{lemma}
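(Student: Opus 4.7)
The plan is to use formula~\eqref{ciserve}, which gives $Ind_\g(V)=1/\l^{tr_V}_1$ once one interprets $\l^{tr_V}_1$ as the Casimir eigenvalue of $C^{tr_V}_\g$ on the adjoint summand $\g\subset s(V)$ (and $\l^{tr_V}_2$ as the eigenvalue on $\p$). Under this identification the whole content of the lemma reduces to the single inequality $\l^{tr_V}_1>1$, so the strategy is to establish that one bound.

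To prove $\l^{tr_V}_1>1$, I would first argue that $\l^{tr_V}_2\ge 0$. Since $\p$ is a finite-dimensional $\g$-module, it decomposes into irreducibles $L_\g(\mu_j)$ and on each of them $C^{tr_V}_\g$ acts by the scalar $(\mu_j,\mu_j+2\rho)^{tr_V}$. Because $V$ is a nontrivial irreducible module over the simple Lie algebra $\g$, the form $tr_V$ is a positive multiple of the Killing form, and the standard estimate $|\mu_j+\rho|^2\ge |\rho|^2$ for dominant integral $\mu_j$ therefore gives $(\mu_j,\mu_j+2\rho)^{tr_V}\ge 0$. The hypothesis that $C^{tr_V}_\g$ acts scalarly on $\p$ forces all these scalars to coincide with $\l^{tr_V}_2$, so $\l^{tr_V}_2\ge 0$.

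Combining this with the assumption~\eqref{rell} yields $\l^{tr_V}_1>\l^{tr_V}_2+1\ge 1$, and then \eqref{ciserve} gives $Ind_\g(V)=1/\l^{tr_V}_1<1$, as required. I do not expect a real obstacle in carrying this out: the proof is essentially a one-liner, and the only point needing genuine care is bookkeeping, namely the identification of $\l^{tr_V}_1$ as the eigenvalue attached to the adjoint summand $\g$ rather than to $\p$, without which the hypothesis~\eqref{rell} would not point in the direction of the conclusion.
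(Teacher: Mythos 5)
Your proposal is correct and is essentially the paper's own argument: both rest on \eqref{ciserve} to identify $Ind_\g(V)=1/\l^{tr_V}_1$ with $\l^{tr_V}_1$ the eigenvalue on the adjoint summand, and then combine \eqref{rell} with the non-negativity of the Casimir eigenvalue $\l^{tr_V}_2$ on $\p$ to conclude $\l^{tr_V}_1>1$. The paper phrases this as the chain $\frac{1}{\l^{tr_V}_1}<\frac{\l^{tr_V}_1-\l^{tr_V}_2}{\l^{tr_V}_1}=1-\frac{\l^{tr_V}_2}{\l^{tr_V}_1}<1$, leaving the positivity of the eigenvalues implicit where you spell it out, but the content is the same.
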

\begin{proof} By \eqref{ciserve}
$$Ind_\g(V)=\frac{1}{\l^{tr_V}_1}<\frac{\l^{tr_V}_1-\l^{tr_V}_2}{\l^{tr_V}_1}=1-\frac{\l^{tr_V}_2}{\l^{tr_V}_1}<1.$$
\end{proof}
\subsection{Classification of maximal reductive subalgebras  of a simple Lie algebra}\label{classDyn}
Recall that a Lie subalgebra $\k$ is said to be reductive in a Lie algebra $\g$ if the adjoint action of $\k$ on $\g$ is completely reducible. 
\par
\begin{defi}\label{mr}Let  $\g$ be a simple Lie algebra. We call a subalgebra $\k$ of $\g$ {\sl maximally reductive} if it  is maximal among subalgebras reductive in $\g$. 
\end{defi}
\noindent Note that a maximally reductive algebra need not  be a maximal subalgebra.\par
The next lemma shows that the classification  of maximally reductive  subalgebras can be reduced to Dynkin classification of  maximal semisimple subalgebras (i.e., semisimple subalgebras which are maximal among all subalgebras) and the classification of maximally reductive equal rank subalgebras.
\begin{lemma}\label{l1} (1). 
Suppose that $\k$ is semisimple and maximally reductive in $\g$. Then $\k$ is a maximal subalgebra of $\g$.
\par\noindent
(2). Suppose that $\k$ is maximally reductive in $\g$ and that it is not semisimple. Then $\k$ is an equal rank subalgebra.
\end{lemma}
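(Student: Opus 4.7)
The plan is to prove (1) and (2) separately using the classification of maximal subalgebras of simple Lie algebras, in particular Morozov's dichotomy and the Levi--Mostow theorem.

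For part (1), I proceed by contradiction: suppose $\k$ is semisimple and maximally reductive, but not a maximal subalgebra of $\g$. Pick a subalgebra $\mathfrak{m}$ of maximal dimension strictly between $\k$ and $\g$; then $\mathfrak{m}$ is a maximal proper subalgebra of $\g$. By Morozov's classical theorem on maximal subalgebras of a simple Lie algebra, $\mathfrak{m}$ is either reductive in $\g$ or parabolic. If $\mathfrak{m}$ is reductive in $\g$, it already contradicts the maximality of $\k$ among subalgebras reductive in $\g$. Otherwise $\mathfrak{m}=\mathfrak{l}\ltimes\mathfrak{n}$ is a proper parabolic with Levi part $\mathfrak{l}$ and nilradical $\mathfrak{n}$. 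By the Levi--Mostow theorem the semisimple subalgebra $\k$ lies in some Levi subalgebra $\mathfrak{l}'$ of $\mathfrak{m}$; this $\mathfrak{l}'$ has the form $\exp(\mathrm{ad}\,n)(\mathfrak{l})$ for some $n\in\mathfrak{n}$, and since $\exp(\mathrm{ad}\,n)$ is an inner automorphism of $\g$, the image $\mathfrak{l}'$ remains reductive in $\g$. A Levi of a proper parabolic of a simple Lie algebra has nontrivial center, so $\mathfrak{l}'$ is not semisimple and must therefore strictly contain $\k$, contradicting maximality.

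For part (2), suppose $\k$ is maximally reductive but not semisimple. Since $\k$ acts semisimply on $\g$ and $\k$ is itself a $\k$-submodule (and hence a direct summand), the ad-action of $\k$ on $\k$ is semisimple, so $\k$ is reductive as an abstract Lie algebra with decomposition $\k=Z(\k)\oplus[\k,\k]$ where $Z(\k)\neq 0$ by hypothesis. By Schur's lemma, $Z(\k)$ acts by scalars on each $\k$-irreducible summand of $\g$, so every element of $Z(\k)$ acts semisimply on $\g$. Thus $Z(\k)$ is a toral subalgebra of $\g$ and is contained in some Cartan subalgebra $\mathfrak{t}$ of $\g$. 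The centralizer $C_\g(Z(\k))$ is the standard generalized Levi subalgebra $\mathfrak{t}\oplus\bigoplus_{\alpha(Z(\k))=0}\g_\alpha$, which is reductive in $\g$, contains $\mathfrak{t}$, and contains $\k$. Maximality then forces $\k=C_\g(Z(\k))\supseteq\mathfrak{t}$, showing that $\k$ has the same rank as $\g$.

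The only nontrivial ingredient in the plan is Morozov's dichotomy for maximal subalgebras of simple complex Lie algebras used in part (1); once this is granted, both parts follow from standard structure theory, together with the observation (needed in both parts) that a (generalized) Levi subalgebra of $\g$ is genuinely reductive \emph{in} $\g$ rather than merely reductive as an abstract Lie algebra, which follows from its explicit description via the root-space decomposition.
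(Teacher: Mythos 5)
Your proposal is correct and follows essentially the same route as the paper: part (1) reduces to Morozov's theorem that a non-semisimple maximal subalgebra is parabolic, followed by the Levi--Malcev conjugacy argument placing $\k$ inside a conjugate of the (non-semisimple, reductive-in-$\g$) Levi factor; part (2) is the paper's centralizer-of-the-center argument, with the useful extra detail that semisimplicity of the action of $Z(\k)$ comes from Schur's lemma applied to the irreducible $\k$-summands of $\g$.
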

\begin{proof} (1).
If $\k$ is maximally reductive but it is not maximal, then it is contained in a non-semisimple maximal subalgebra $\mathfrak{r}$ of $\g$. By a theorem of Morozov (see e.g. \cite{Mor}), $\mathfrak{r}$ is a parabolic subalgebra. By \cite[I, \S 6.8, Corollaire 2]{bour}, $\k$ is  a Levi component (=maximal semisimple subalgebra) $\mathfrak{m}$ of $\mathfrak{r}$. Write $\mathfrak{r}=\mathfrak{l}\oplus 	\mathfrak{n}$ with $\mathfrak{l}$ reductive in $\g$ and $\mathfrak{n}$ the nilradical of $\mathfrak{r}$. Then $\mathfrak{l}=[\mathfrak{l},\mathfrak{l}]\oplus \mathfrak{z}$, with $\mathfrak{z}$ the center of $\mathfrak{l}$.  Since $[\mathfrak{l},\mathfrak{l}]$ is a Levi component for $\mathfrak{r}$, by the Levi-Malcev theorem, $[\mathfrak{l},\mathfrak{l}]$ and $\k$ are conjugated by an inner automorphism $e^{ad(x)}, x\in\mathfrak{r}$, hence $\k\subset \k\oplus e^{ad(x)}(\mathfrak{z})$ and $\k\oplus e^{ad(x)}(\mathfrak{z})$ is reductive in $\g$. A contradiction.
\par
(2). 
Let $\mathfrak{z}\ne \{0\}$ be the center of $\k$. Then $\mathfrak{z}$ acts semisimply on $\g$, hence it is contained in a Cartan subalgebra $\h$ of $\g$. It follows that the centralizer of $\mathfrak{z}$ in $\g$ is $\mathfrak{l}+\h$,  $\mathfrak{l}$ being the Levi component of a parabolic subalgebra of $\g$. Since $\k$ is maximal reductive and $\k\subset \mathfrak{l}+\h$, we have $\k=\mathfrak{l}+\h$, hence it is an equal rank subalgebra.
\end{proof}

Combining Lemma \ref{l1}  with Dynkin's classification of maximal semisimple subalgebras of  a simple Lie algebra $\g$ (see  \cite{Dynk1}, \cite{Dynk}), we obtain the following description of   maximally reductive subalgebras.
\par Let $\k$ be a maximally reductive subalgebra of a simple Lie algebra $\g$. Then,
either $\k$ is equal rank subalgebra or, up to an inner automorphism of $\g$,  it falls in one of the following classes: 
\begin{enumerate}
\item if   $\k$ is simple,  then either $\k=so(2n-1)\subset\g=so(2n)$ or $\k\subset \g=so(V), sp(V), sl(V)$ with $V$  an irreducible $\k$--module except for the  cases  listed in 
\cite[Table 1]{Dynk1};
\item if $\g$ is of classical type and $\k$ is non-simple then $\k$  is  one of the subalgebras $\k$  in Table 2, in Section \ref{analysis};
\item if $\g$ is of exceptional type,  then $\k$ is  one of the algebras in  \cite[Theorem 14.1]{Dynk} (see also \cite{Min}).
\end{enumerate}
  \section{Conformal embeddings of $\widetilde V(k,\k)$ in  $V_k(\g)$ with $\k$ simple and $\g$ of classical type.}
 
\subsection{Conformal embeddings of $\widetilde V(k,\k)$ in  $V_k(so(V))$ with $\k$ simple.}

In this section we discuss the conformal embeddings of a simple Lie algebra $\k$ in $so(V)$. More specifically we consider an irreducible finite dimensional representation $V$ of $\k$ admitting a $\k$-invariant nondegenerate symmetric form $\langle\cdot,\cdot\rangle $.
 
 From now on we will denote $so(V,\langle\cdot,\cdot\rangle)$ simply as $so(V)$.

The conformal embeddings in $V_k(so(V))$ with $k\in \ganz_+$ and $V$ a finite dimensional representation are the subject of the symmetric space theorem. We recall this theorem
in a vertex algebra formulation.

\begin{thma}[{\bf Symmetric space theorem, \cite{GNO}}]{\it
Assume that a compact group $U$ with complexified Lie algebra $\mathfrak u$ acts faithfully on a finite dimensional complex space $V$ admitting a $U$--invariant symmetric nondegenerate form.

Then there is a conformal  embedding of $\widetilde V(k,\mathfrak u)$ in $V_k(so(V))$ with $k\in\ganz_+$ if and only if $k=1$ and 
there is a Lie algebra structure on $\mathfrak r=\mathfrak u\oplus V$ making $\mathfrak r$ semisimple, $(\mathfrak r,\mathfrak u)$ is a symmetric pair, and a nondegenerate invariant form on $\mathfrak r$ is given by the direct sum of a invariant form on $\mathfrak u$ with the chosen $U$--invariant form on $V$.}
\end{thma}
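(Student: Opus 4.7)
The plan is twofold: first pin down the level via the free-fermion realization of $V_1(so(V))$, then invoke Kostant's theory of pairs of Lie type (already flagged in the introduction as the authors' preferred route) to build the Lie algebra structure on $\mathfrak r=\mathfrak u\oplus V$.

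First I would reduce to $k=1$. The vertex algebra $V_1(so(V))$ can be realized as the even subalgebra of $\dim V$ free fermions, with Sugawara central charge $\dim V/2$. At integer level $k$, the level $k'$ of $\mathfrak u$ inside $V_k(so(V))$ is $k$ times the Dynkin index $\mathrm{Ind}_{\mathfrak u}(V)$, and conformal embedding demands the two Sugawara central charges coincide. Using the formula \eqref{ind} together with the fact that $V$ is a faithful $\mathfrak u$-module, I would show that the growth of $c(V_k(so(V)))$ in $k$ outpaces $c(\widetilde V(k,\mathfrak u))$ whenever $k\ge 2$, forcing $k=1$; the borderline case requires checking that no inequality degenerates to equality except at $k=1$.

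Second, at $k=1$, apply the AP-criterion (Theorem above) to the $\mathfrak u$-decomposition $so(V)=\mathfrak u\oplus\mathfrak p$. Since $so(V)\cong\Lambda^2V$ as $\mathfrak u$-modules, the Casimir action on $\mathfrak p$ is controlled by the Casimir action on $V\otimes V$. The AP identity forces $C_{\mathfrak u}$ to act by a common scalar $c$ on each component of $\mathfrak p$, which pulls back to forcing $C_{\mathfrak u}$ to act by a specific scalar on $V$ itself (via the $\mathfrak u$-equivariant surjection $V\otimes V\twoheadrightarrow so(V)\to\mathfrak p$). This scalar is exactly the one appearing in Kostant's criterion \cite{Kost}: define $[u,v]\in V$ for $u\in\mathfrak u$, $v\in V$ by the given representation, and $[v_1,v_2]\in\mathfrak u$ by the unique $\mathfrak u$-equivariant map $\Lambda^2 V\to\mathfrak u$ dual to the embedding, normalized via the invariant forms. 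Kostant's theorem says the Jacobi identity for the resulting bracket on $\mathfrak r=\mathfrak u\oplus V$ holds iff $C_{\mathfrak u}$ acts by precisely this scalar on $V$. The natural $\mathbb Z/2$-grading of $\mathfrak r$ then exhibits $(\mathfrak r,\mathfrak u)$ as a symmetric pair, and semisimplicity follows from nondegeneracy of the direct-sum form.

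For the converse, given a semisimple $\mathfrak r=\mathfrak u\oplus V$ with $(\mathfrak r,\mathfrak u)$ a symmetric pair and a compatible invariant form, Schur's lemma applied on each isotypic summand of $V$ guarantees $C_{\mathfrak u}$ acts scalarly, and a direct computation using the $\mathfrak r$-Casimir $C_{\mathfrak r}=C_{\mathfrak u}+C_V$ (Casimir built from a form-orthonormal basis of $V$) evaluated on $V$ produces the scalar needed by the AP-criterion at $k=1$. Summing the AP-identity across the irreducible components of $\mathfrak p$ yields conformal embedding. The main obstacle is the bookkeeping of normalizations: the scalar forced by AP is expressed in the $(\cdot,\cdot)$-normalization of $so(V)$, whereas Kostant's scalar is expressed intrinsically on $\mathfrak r$; reconciling them requires tracking how the form restricts from $so(V)$ to $\mathfrak u$ via the Dynkin index of $V$, and is precisely where the rigidity of $k=1$ manifests.
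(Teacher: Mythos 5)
First, a point of reference: the paper does not prove this statement at all. It is recalled verbatim from \cite{GNO} (with the fact that integrable conformal embeddings force $k=1$ going back to \cite{AGO}) and is then used as an \emph{input} to Proposition \ref{strong}. So there is no internal proof to compare against, and I can only assess your attempt on its own terms; as it stands it has two genuine gaps.

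The first gap is the reduction to $k=1$. Equating Sugawara central charges, $c_k(so(V))=\sum_i c_{kj_i}(\mathfrak u_i)$ clears denominators to a polynomial equation of low degree in $k$ (linear, with a unique root, when $\mathfrak u$ is simple), but nothing in your sketch shows that this root, when it happens to be a positive integer, must equal $1$. Both $c_k(so(V))$ and $c(\widetilde V(k,\mathfrak u))$ are increasing in $k$ and bounded, so the asserted ``outpacing'' for $k\ge 2$ is not automatic and is never verified. You also never invoke the one ingredient that makes the integrable case rigid, namely unitarity: for $k\in\ganz_+$ the coset Virasoro field is unitary, so vanishing of the coset central charge is actually equivalent to conformality, and the argument of \cite{AGO} forcing $k=1$ is a genuine piece of work (a bound on the Casimir eigenvalues on the components of $\mathfrak p\subset\bigwedge^2V$), not a growth comparison of central charges.

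The second gap is the bridge to Kostant. Kostant's criterion, as quoted in the paper before Proposition \ref{strong} (see \cite[Theorems 1.50 and 1.59]{Kost}), for $\mathfrak u\oplus V$ to carry a Lie algebra structure making it a symmetric pair is that $\nu_*(C_{\mathfrak u})$ be a \emph{scalar in the Clifford algebra} $Cl(V)$, i.e. that its degree-four component $\sum_i\tau^{-1}(X_i)\wedge\tau^{-1}(X_i)\in\bigwedge^4V$ vanish; it is not the statement that $C_{\mathfrak u}$ acts by a prescribed scalar on $V$. The scalar action of $C_{\mathfrak u}$ on $\mathfrak p$ demanded by the AP-criterion is strictly weaker: the paper's Lemma \ref{critKost} computes that it only yields $\left(1-\frac{2g_1-\gamma}{4}\right)\sum_i\tau^{-1}(X_i)\wedge\tau^{-1}(X_i)=0$, so that either the quartic tensor vanishes (the symmetric-pair situation) or $2g_1-\gamma=4$, which forces $k=-2$. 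The cases $(B3)_{\omega_3}$ and $(G2)_{\omega_1}$ of Proposition \ref{ce} are precisely conformal embeddings where $C_{\mathfrak u}$ acts scalarly on everything in sight and yet no symmetric pair exists, so the equivalence you assert between the AP scalar condition and Kostant's condition is false as stated. The argument can be repaired --- once $k=1$ is known, AP gives $2g_1-\gamma=-2\ne 4$ and hence the tensor must vanish --- but that repair is exactly the computation of Lemma \ref{critKost}, which your proposal does not contain; moreover that computation is carried out in the paper only for $\mathfrak u$ simple and $V$ irreducible, whereas the theorem you are proving allows $\mathfrak u$ reductive and $V$ reducible, where the bookkeeping (several levels $kj_i$, several Casimirs) still has to be done.
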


Note that the symmetric space theorem applies to a more general setting than ours as $\mathfrak u$ need not to be simple and $V$ is not necessarily irreducible. Following \cite{Kost}, we let  $\nu:\k\to so(V)$ be the representation map. We identify $\k$ and $\nu(\k)$. Let also $\tau:\bigwedge^2 V \to so(V)$ be  the $\k$--equivariant isomorphism such that $\tau(u)(v)=-2i(v)(u)$, where $i$ is the contraction map, extended to $\bigwedge V$ as an odd derivation. More explicitly
$
\tau^{-1}(X)=\frac{1}{4}\sum_i X(v_i)\wedge v_i,
$
where $\{v_i\}$ is an orthonormal basis of $V$.

Let $(\cdot,\cdot)$ be the normalized invariant form on $so(V)$. Recall that $(X,Y)=\half tr_V(XY)$.
As invariant symmetric form on $\k$ we choose  $(\cdot,\cdot)_{|\k\times \k}$. 
With notation as in Subsection \ref{APC}, let $g_1=\half\lambda_1^{(\cdot,\cdot)_{|\k\times \k}}=\half\l_1^{\half tr_V}=\l_1^{tr_V}$ be the eigenvalue for the action of $C_\k$ on $\k$. 
Let $\mathfrak p$ be the orthogonal  complement of $\k$ in $so(V)$. Let $p_\k$, $p_{\mathfrak p}$ be the orthogonal projections of $\bigwedge^2 V$ onto $\k$ and $\mathfrak p$ respectively.

We would like to classify all irreducible representations $V$ of $\k$ such that there is $k\in\C$ such that  $\widetilde V(k,\k)$ embeds  conformally in $V_k(so(V))$. According to Corollary \ref{Actsscalar}, $C_\k$ must act scalarly on $\mathfrak p$. 
Let $\gamma=2\l_2^{tr_V}$ be the eigenvalue for the action of $C_\k$ on $\mathfrak p$. 

To study conformal embeddings at non-integrable levels, we reformulate the symmetric space theorem using a criterion due to Kostant. 
Recall  the following notation from \cite{Kost}: let $\nu_*:\k\to\bigwedge^2 V$ the unique Lie algebra homomorphism such that $\tau\circ \nu_*=\nu$. Let $Cl(V)$ denote the Clifford algebra of $(V,\langle\cdot,\cdot\rangle)$. Extend $\nu_*$ to a Lie algebra homomorphism $\nu_*: \k\to Cl(V)$, hence to a homomorphism of associative algebras  $\nu_*: U(\k)\to Cl(V)$.
  Consider  $\nu_*(C_\k)=\sum_i\nu_*(X_i)^2$, where $\{X_i\}$ is an orthogonal basis of $\k$. Also recall that a pair $(\k,\nu)$ consisting of a 
Lie algebra $\k$ with a bilinear symmetric invariant form $(\cdot,\cdot)_\k$ and of a representation $\nu:\k\to so(V)$  is said to be 
 {\it of Lie type} if there is a Lie algebra structure on $\mathfrak r=\k\oplus V$, extending that of $\k$, such that 1) $[x,y]=\nu(x)y,\,x\in\k,\,y\in V$ and 2) the bilinear  form 
 $B_{\mathfrak r}=(\cdot,\cdot)_\k\oplus \langle \cdot, \cdot \rangle$ is $ad_{\mathfrak r}$--invariant. In \cite[Theorem 1.50]{Kost}  Kostant proved that $(\k,\nu)$ is a  pair of Lie type if and only if 
 there exists $v\in (\bigwedge^3 V)^\k$ such that 
 \begin{equation}\label{nu}\nu_*(C_\k)+v^2\in\C.\end{equation} Moreover he proved in \cite[Theorem 1.59]{Kost} that $v$ can be taken to be $0$ is and only if $(\k\oplus V,\k)$ is a symmetric pair.\par
 We now come to the reformulation of the Symmetric Space Theorem.
 \begin{prop}\label{strong}
Let $\k$ be a simple Lie algebra and $V$ an irreducible finite-dimensional representation of $\k$ admitting a nondegenerate symmetric $\k$--invariant form. 

Then there is $k\in\ganz_+$ such that $\widetilde V(k,\k)$ is conformally embedded in $V_k(so(V))$ if and only if 
\begin{equation}\label{Kosteq}
\sum_i \tau^{-1}(X_i)\wedge \tau^{-1}(X_i)=0,
\end{equation}
where $\{X_i\}$ is an orthogonal basis of $\k$.
 \end{prop}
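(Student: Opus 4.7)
The plan is to combine three classical ingredients: the Symmetric Space Theorem stated above, Kostant's Lie-type criterion (Theorem~1.59 of \cite{Kost}), and the Chevalley vector-space isomorphism $\sigma\colon Cl(V)\to\bigwedge V$. Together they reduce the existence of a conformal embedding at some $k\in\ganz_+$ to a single vanishing condition on the top symbol of $\nu_*(C_\k)$.

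First, the Symmetric Space Theorem forces $k=1$ and rephrases the left-hand side of the asserted equivalence as the existence of a semisimple Lie algebra structure on $\mathfrak r=\k\oplus V$ making $(\mathfrak r,\k)$ a symmetric pair with invariant form $(\cdot,\cdot)_{|\k\times\k}\oplus\langle\cdot,\cdot\rangle$. Kostant's Theorem~1.59 then recasts the existence of such a structure as the condition $v=0$ in \eqref{nu}, equivalently as $\nu_*(C_\k)\in\C$ inside $Cl(V)$.

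Next, I would transfer this scalarity condition to $\bigwedge V$ via $\sigma$. Each $\nu_*(X_i)$ lies in $\bigwedge^2 V\subset Cl(V)$ and is even, so $\nu_*(C_\k)=\sum_i \nu_*(X_i)^2$ sits in the even part of the Clifford filtration in degree at most four, and a standard symbol computation yields
\begin{equation}\label{symbolexp}
\sigma(\nu_*(C_\k))=c_0+c_2+c_4,\qquad c_j\in\bigwedge^j V,
\end{equation}
with top symbol $c_4=\sum_i \nu_*(X_i)\wedge\nu_*(X_i)=\sum_i \tau^{-1}(X_i)\wedge\tau^{-1}(X_i)$. Scalarity of $\nu_*(C_\k)$ is thus equivalent to $c_2=c_4=0$.

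The crucial point is that $c_2$ vanishes automatically. Centrality of $C_\k$ in $U(\k)$ together with $\k$-equivariance of $\sigma$ gives $c_2\in(\bigwedge^2 V)^\k$; via $\tau$ this corresponds to a $\k$-invariant element of $so(V)$. Since $V$ is irreducible, Schur's lemma forces any $\k$-equivariant endomorphism of $V$ to be a scalar, and the only scalar matrix lying in $so(V)$ is zero, so $c_2=0$ and \eqref{Kosteq} becomes equivalent to the scalarity of $\nu_*(C_\k)$. The main obstacle I expect is keeping the sign and normalisation conventions coherent in the symbol computation identifying $c_4$ with $\sum_i \tau^{-1}(X_i)\wedge\tau^{-1}(X_i)$; a small additional check is that Kostant's construction with $v=0$ really produces a \emph{semisimple} $\mathfrak r$ (as required by the Symmetric Space Theorem), which follows since $[V,V]\subset\k$ is a $\k$-submodule ideal, hence $0$ or $\k$, and the non-degenerate invariant form on $\mathfrak r$ excludes the first alternative.
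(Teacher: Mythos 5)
Your proof is correct and follows essentially the same route as the paper: the Symmetric Space Theorem plus Kostant's Theorem~1.59 reduce the existence of a conformal embedding at positive integral level to the scalarity of $\nu_*(C_\k)$ in $Cl(V)$, and the top symbol of $\nu_*(C_\k)$ is then identified with $\sum_i \tau^{-1}(X_i)\wedge \tau^{-1}(X_i)$. The only cosmetic difference is that you establish the vanishing of the degree-two component via Schur's lemma (and the semisimplicity of $\k\oplus V$ by a direct argument), where the paper simply cites Kostant's Proposition~1.37 and Theorem~1.61 for these two points.
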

 
 \begin{proof}
In \cite[Proposition 1.37]{Kost}
 it is shown that  $\nu_*(C_\k)$ might have nonzero components only in degrees $0,4$ w.r.t. the standard grading of $\bigwedge V \cong Cl(V)$.
Recall that, if $y\in V$ and $w\in Cl(V)$, then $y\cdot w=y\wedge w+i(y)w$, hence 
 $\nu_*(C_\k)=\sum_i \tau^{-1}(X_i)\wedge \tau^{-1}(X_i)+a$ with  $a\in\wedge^0V=\C$. Thus, 
 if \eqref{Kosteq} holds,
then,
$ \nu^*(C_\k)$ is a constant in $Cl(V)$.   Thus equation \eqref{nu} holds with $v=0$ and by Kostant's theorem quoted above (\cite[Theorem 1.59]{Kost}) 
we can conclude that 
$(\k\oplus V,\k)$ is a symmetric pair.  
Clearly we can assume that $V$ is not the trivial one-dimensional $\k$--module. Since $V$ is irreducible, we see that $V^\k=\{0\}$. Thus the hypothesis of \cite[Theorem 1.61]{Kost} are satisfied 
and $\mathfrak k\oplus V$ is semisimple.
By the symmetric space theorem, we know that $\widetilde V(1,\k)$ embeds conformally in $V_1(so(V))$. 

Conversely, if $\widetilde V(k,\k)$ is conformally embedded in $V_k(so(V))$ for some $k\in\ganz_+$, then, by the symmetric space theorem, there is a Lie algebra structure on $\mathfrak r=\k\oplus V$ making $\mathfrak r$ semisimple, $(\mathfrak r,\k)$ is a symmetric pair, and a nondegenerate invariant form on $\mathfrak r$ is given by the direct sum $(\cdot,\cdot)_{|\k\times\k}\oplus\langle\cdot,\cdot\rangle$. It follows from \cite[Theorem 1.59]{Kost} that \eqref{Kosteq} holds.
 \end{proof}
 

%
\begin{rem}

Let us provide an interpretation of Proposition  \ref{strong} and Kostant's results  in the context of the representation theory of affine vertex algebras. 

Recall that spin modules for the Lie algebra  of type $D_n$  are  irreducible   highest weight modules with highest weights  $\omega_n$ and $\omega_{n-1}$ (resp. $\omega_{n}$ if algebra is of type $B_n$).
Assume that $\widetilde V(k,\k)$ is conformally embedded in $V_k(so(V))$ and that the  spin $so(V)$--modules are modules for Zhu's algebra $A(V_k(so(V)) )$ for $V_k(so(V))$. Then one can show that there is a non-trivial homomorphism from $A(V_k(so(V)) )$ to
the Clifford algebra $Cl(V)$. By using the Kostant criterion,  we see that then (\ref{Kosteq}) holds. Applying  Proposition \ref{strong}, we get  $k\in\ganz_+$. 

So the only non-integrable candidates for realization of conformal embeddings  are vertex algebras $V_k(so(V))$ which do not admit  embeddings of Zhu's algebra to the Clifford algebra, and therefore do not admit spin modules. Examples of  such vertex algebras are $V_{-2} (B_3)$ and   $V_{-2} (D_4)$  which  provide  models for  realizations of non-integrable  conformal embeddings  (cf. \cite{A},  \cite{P-Glasnik}).
 \end{rem}

Conformal embeddings at non-integrable levels do occur. Consider the following cases (cf.  \cite[Section 5]{A}):
 \begin{itemize}
 
\item[$(B3)_{\omega_3}$]: $\k= B_3$, $ V= L_{B_3} (\omega_3)$  be the irreducible $8$--dimensional $B_3$--module. Then we have conformal embedding of $V_{-2} (B_3)$ into $V_{-2} (D_4) = V_{-2} (so(V))$.
\item[$(G_2)_{\omega_1}$]: $\k= G_2$, $V= L_{G_2} (\omega_1)$  be the irreducible $7$--dimensional $G_2$--module. Then we have conformal embedding of $V_{-2} (G_2)$ into $V_{-2} (B_3) = V_{-2} (so(V))$.

 \end{itemize} 


\begin{lemma}\label{critKost}
Assume that $\widetilde V(k,\k)$ embeds conformally in $V_k(so(V))$ with $k\not\in \ganz_+$. 
Then $k=-2$ and 
$$
\gamma=2\frac{(\dim V-4)\dim\k}{\dim so(V)},\quad g_1=\half(\gamma+4).
$$
\end{lemma}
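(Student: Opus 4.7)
The plan is to combine the AP-criterion with a trace identity on $\bigwedge^2 V \cong so(V)$. The AP-criterion immediately gives $\gamma = 2(k+g_1)$, so $k = \gamma/2 - g_1$; in particular, the equality $g_1 = \tfrac12(\gamma+4)$ is equivalent to $k = -2$. Hence the lemma reduces to establishing $k = -2$ together with the displayed formula for $\gamma$.

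First I would compute $tr_{so(V)}(C_\k)$ in two ways. By Corollary \ref{Actsscalar}, under the conformal embedding hypothesis $C_\k$ acts by $2g_1$ on $\k$ and by $\gamma$ on $\p$, giving $tr_{so(V)}(C_\k) = 2g_1 \dim\k + \gamma \dim\p$. On the other hand, the classical identity $tr_{\bigwedge^2 V}(X^2) = (\dim V - 2)\, tr_V(X^2) + (tr_V X)^2$ applied to each element of an orthonormal basis of $\k$ with respect to $(\cdot,\cdot) = \tfrac12 tr_V$, together with $tr_V X = 0$ for $X \in so(V)$ and $tr_V(C_\k) = 2\dim\k$ (since $C_\k$ acts on $V$ by the scalar $2\dim\k/\dim V$), yields $tr_{so(V)}(C_\k) = 2(\dim V - 2)\dim\k$. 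Equating the two expressions produces
\[
\gamma\dim\p = 2\dim\k\,(\dim V - 2 - g_1),
\]
which, combined with AP, rearranges into the single linear relation $k\dim\p + g_1 \dim so(V) = \dim\k(\dim V - 2)$. Plugging in $k = -2$ immediately gives $g_1 \dim so(V) = \dim\k(\dim V - 4) + 2\dim so(V)$, and hence $\gamma = 2(g_1 - 2) = 2(\dim V - 4)\dim\k/\dim so(V)$ as claimed.

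It remains to justify $k = -2$, which is the main obstacle. By Proposition \ref{strong}, the hypothesis $k \notin \Z_+$ forces the Kostant equation \eqref{Kosteq} to fail, so the $\bigwedge^4 V$-component $\alpha = \sum_i \tau^{-1}(X_i)\wedge\tau^{-1}(X_i)$ of $\nu_*(C_\k) \in Cl(V)$ is a nonzero $\k$-invariant. Combining this with the Clifford-algebra computation that the scalar part of $\nu_*(C_\k)$ equals $\dim\k/4$, and with the Remark following Proposition \ref{strong}, which prevents the spin $so(V)$-modules from extending to $V_k(so(V))$-modules under this hypothesis, a null-vector analysis at the spin weight of $V^k(so(V))$ pins down the level $k = -2$. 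This final pinning-down is the step requiring the most care; once it is in hand, the formulas for $\gamma$ and $g_1$ follow from the linear relation above by routine substitution.
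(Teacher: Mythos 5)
Your reduction is set up correctly: the AP-criterion gives $\gamma=2(k+g_1)$, so the whole lemma does come down to proving $k=-2$, and your trace computation $tr_{so(V)}(C_\k)=2(\dim V-2)\dim\k$ (equated against $2g_1\dim\k+\gamma\dim\p$) is a valid substitute for the paper's central-charge argument in deriving the displayed formulas once $k=-2$ is known. That part checks out.

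The gap is exactly where you flag it: the step $k=-2$ is not proved, and the route you sketch would not work. The Remark after Proposition \ref{strong} only says that if the spin modules were $A(V_k(so(V)))$-modules then \eqref{Kosteq} would hold and $k\in\ganz_+$; its contrapositive tells you the spin modules do not descend when $k\notin\ganz_+$, but that is a statement true for a whole range of levels and cannot single out $k=-2$. A ``null-vector analysis at the spin weight'' is not an argument. What the paper actually does is purely algebraic and does not mention spin modules at all: it expands $\alpha=\sum_i\tau^{-1}(X_i)\wedge\tau^{-1}(X_i)$ in $\bigwedge^4V$, uses the two Casimir eigenvalues $2g_1$ on $\k$ and $\gamma$ on $\p$ together with an explicit formula for the projection $p_\k(v_j\wedge v_r)=-2\sum_t\langle v_j,X_t(v_r)\rangle\,\tau^{-1}(X_t)$, and arrives at the self-consistency identity $\alpha=\frac{2g_1-\gamma}{4}\,\alpha$. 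Proposition \ref{strong} guarantees $\alpha\ne0$ when $k\notin\ganz_+$, so $2g_1-\gamma=4$, and then the AP-criterion gives $k=\frac{\gamma-2g_1}{2}=-2$. So the missing idea is that the nonvanishing invariant $\alpha$ is an \emph{eigenvector} of an operator whose eigenvalue is $\frac{2g_1-\gamma}{4}$; without that computation (or an equivalent identity pinning down $2g_1-\gamma$), your proof does not close.
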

\begin{proof}
Denote by $\lambda$  the eigenvalue of the action of $C_\k$ on $V$. Then 
\begin{align*}
\sum_i \tau^{-1}&(X_i)\wedge \tau^{-1}(X_i)=\frac{1}{16}\sum_{i,j,r} X_i(v_j)\wedge v_j\wedge X_i(v_r)\wedge v_r\\
&=-\frac{1}{16}\sum_{i,j,r} X_i(v_j)\wedge X_i(v_r)\wedge v_j\ \wedge v_r=-\frac{1}{32}\sum_{i,j,r} X^2_i(v_j\wedge v_r)\wedge v_j\ \wedge v_r\\&+\frac{1}{32}\sum_{i,j,r} X^2_i(v_j)\wedge v_r\wedge v_j\ \wedge v_r+\frac{1}{32}\sum_{i,j,r} v_j\wedge X^2_i( v_r)\wedge v_j\ \wedge v_r\\&=-\frac{1}{32}\sum_{i,j,r} X^2_i(v_j\wedge v_r)\wedge v_j\ \wedge v_r+\frac{\lambda}{16}\sum_{j,r} v_j\wedge v_r\wedge v_j\ \wedge v_r\\
&=-\frac{1}{32}\sum_{i,j,r} X^2_i(v_j\wedge v_r)\wedge v_j\ \wedge v_r.
\end{align*}

Write now
\begin{align*}
\sum_{i,j,r} X^2_i(v_j\wedge v_r)\wedge v_j\ \wedge v_r&=\sum_{i,j,r} X^2_i(p_\k(v_j\wedge v_r))\wedge v_j\ \wedge v_r+\sum_{i,j,r} X^2_i(p_{\mathfrak p}(v_j\wedge v_r))\wedge v_j\ \wedge v_r\\
&=2g_1\sum_{j,r} p_\k(v_j\wedge v_r)\wedge v_j\ \wedge v_r+\gamma\sum_{j,r} p_{\mathfrak p}(v_j\wedge v_r)\wedge v_j\ \wedge v_r\\
&=(2g_1-\gamma)\sum_{j,r} p_\k(v_j\wedge v_r)\wedge v_j\wedge v_r+\gamma\sum_{j,r} v_j\wedge v_r\wedge v_j\wedge v_r\\
&=(2g_1-\gamma)\sum_{j,r} p_\k(v_j\wedge v_r)\wedge v_j\wedge v_r.
\end{align*}

We now compute $p_\k(v_j\wedge v_r)$ explicitly. We extend $\langle\cdot,\cdot\rangle$ to $\bigwedge^2V$ by determinants. Set $u_i=\tau^{-1}(X_i)$ and note that
\begin{align*}\langle u_i,u_j\rangle&=\frac{1}{16}\sum_{r,s}det\begin{pmatrix}\langle X_i(v_r),X_j(v_s)\rangle& \langle X_i(v_r),v_s\rangle\\\langle v_r,X_j(v_s)\rangle& \langle v_r,v_s\rangle\end{pmatrix}\\
&=\frac{1}{16}\sum_{r}(\langle X_i(v_r),X_j(v_r)\rangle-\langle v_r,X_j(X_i(v_r))\rangle)\\&=-\frac{1}{8}\sum_{r}\langle X_j(X_i(v_r)),v_r\rangle=-\frac{1}{8}tr(X_jX_i).
\end{align*}
 
 Recall that $(X,Y)=\half tr_V(XY)$, hence $tr_V(X_jX_i)=2(X_j,X_i)$, so that $\langle u_i,u_j\rangle=-\frac{1}{4}\delta_{ij}$; therefore
\begin{align*}
 p_\k(v_j \wedge v_r)=&-4\sum_t\langle v_j\wedge v_r,u_t\rangle u_t=- \sum_{t,k}\langle v_j\wedge v_r,X_t(v_k)\wedge v_k\rangle u_t\\
 &=- \sum_{t,k}det\begin{pmatrix}\langle v_j,X_t(v_k)\rangle& \langle v_j,v_k\rangle\\\langle v_r,X_t(v_k)\rangle& \langle v_r,v_k\rangle\end{pmatrix} u_t=- 2\sum_{t}\langle v_j,X_t(v_r)\rangle u_t.
\end{align*}
 
 Substituting we find that
 \begin{align*}
\sum_i \tau^{-1}(X_i)\wedge \tau^{-1}(X_i)&=\frac{2g_1-\gamma}{16}\sum_{j,r,t} \langle v_j,X_t(v_r)\rangle u_t\wedge v_j\wedge v_r\\
&=\frac{2g_1-\gamma}{64}\sum_{j,r,t,k} \langle v_j,X_t(v_r)\rangle X_t(v_k)\wedge v_k\wedge v_j\wedge v_r\\
&=\frac{2g_1-\gamma}{64}\sum_{r,t,k}  X_t(v_k)\wedge v_k\wedge X_t(v_r)\wedge v_r\\&=\frac{2g_1-\gamma}{4}\sum_t \tau^{-1}(X_t)\wedge \tau^{-1}(X_t).
\end{align*}

By Proposition \ref{strong}, since  $k\not\in\ganz_+$, we have $\sum_t \tau^{-1}(X_t)\wedge \tau^{-1}(X_t)\ne0$, thus 
\begin{equation}
2g_1-\gamma={4}
\end{equation}

Recall that, by AP-criterion, $\frac{\gamma}{2(k+g_1)}=1$ so $k=\frac{\gamma-2g_1}{2}=-2$.

Finally, as central charges must be equal, we obtain
$$
\frac{\dim so(V)}{\dim V-4}=\frac{\dim \k}{g_1-2},
$$
so
\begin{equation}\label{lambdas}
g_1=\frac{(\dim V-4)\dim\k}{\dim so(V)}+2,\quad\gamma=2\frac{(\dim V-4)\dim\k}{\dim so(V)}.
\end{equation}
\end{proof}
 \begin{proposition}\label{ce} The conformal embeddings $(B3)_{\omega_3}, (G2)_{\omega_1}$ are the unique conformal embeddings of a simple Lie algebra in $V_{-2}(so(V))$ with $V$ irreducible.
 \end{proposition}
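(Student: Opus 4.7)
The plan is to derive a Dynkin-index bound from Lemma \ref{critKost} and then exhaust the short list of surviving candidates via Dynkin's classification.

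Suppose $\widetilde V(-2,\k)$ embeds conformally in $V_{-2}(so(V))$ with $\k$ simple and $V$ irreducible. Corollary \ref{Actsscalar} forces $C_\k$ to act by a single scalar on $\p$, and Lemma \ref{critKost} gives $g_1 - \gamma/2 = 2$, which in the notation of Lemma \ref{lemmat} reads $\l^{tr_V}_1 - \l^{tr_V}_2 = 2 > 1$ on the two-step decomposition $so(V) = \k \oplus \p$. Thus Lemma \ref{lemmat} applies (with the role of $\g$ played by $\k$ and $s(V)=so(V)$), yielding $Ind_\k(V) < 1$.

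Next I would invoke Subsection \ref{classDyn}: since $V$ is a self-dual irreducible $\k$-module carrying a symmetric invariant form and $\k$ is simple, the pair $(\k,V)$ is either the trivial identification $so(m) \subset so(m)$ (yielding no non-trivial conformal embedding) or appears in Dynkin's list of maximal simple subalgebras of $so(V)$ acting irreducibly. Combined with the Dynkin-index bound, this leaves only a handful of explicit candidates to inspect: principally the spin representations of $B_n$ and $D_n$ for small $n$, the $7$-dimensional representation of $G_2$, and the $26$-dimensional representation of $F_4$. The exceptional algebras $E_6,E_7,E_8$ are excluded at once, since their smallest non-trivial self-dual symmetric representation is already the adjoint (of index $1$) or larger.

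For each surviving candidate the final step is to decompose $\bigwedge^2 V \cong so(V)$ as a $\k$-module using standard tables and check two things: (a) the complement of $\k$ is isotypic for the Casimir $C_\k$, and (b) the level produced by the AP-criterion (\ref{numcheck2}) is exactly $-2$. I expect the case check to be the main obstacle, but it is finite: in most candidates (e.g.\ $F_4$ on its $26$-dimensional representation, or the spin representations of $B_n, D_n$ with $n \geq 4$) $\bigwedge^2 V$ contains several non-adjoint irreducible summands whose Casimir eigenvalues differ, violating (a) at once. The only two cases in which the decomposition has the form $\mathrm{ad}(\k) \oplus V'$ with $V'$ irreducible, matching Casimir data, and $k=-2$ are $(B_3,L(\omega_3))$, where $so(8) = B_3 \oplus L_{B_3}(\omega_1)$, and $(G_2, L(\omega_1))$, where $so(7) = G_2 \oplus L_{G_2}(\omega_1)$, recovering precisely the two embeddings in the statement.
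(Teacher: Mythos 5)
Your opening is sound and matches the paper's setup: Corollary \ref{Actsscalar} plus Lemma \ref{critKost} give $\l_1^{tr_V}-\l_2^{tr_V}=2$, and Lemma \ref{lemmat} then legitimately yields $Ind_\k(V)<1$. From that point on, however, you diverge from the paper in a way worth noting. You apply the index bound to the defining module $V$ and then try to enumerate orthogonal irreducible $(\k,V)$ of small index. The paper instead applies the index bound to the irreducible components $L(\mu)$ of $\p\subset\bigwedge^2V$, and couples it with the observation that every such $\mu$ lies in the \emph{root lattice} of $\k$ (because $\theta$ occurs in $\bigwedge^2V$, so $2\l$ is in the root lattice). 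These two constraints together collapse the possibilities for $\mu$ to four fundamental weights (the highest short roots of $B_n$, $C_n$, $F_4$, $G_2$ -- Table 1), which forces the \emph{type} of $\k$; the module $V$ is then pinned down by solving the Diophantine conditions coming from $\dim so(V)=\dim\k+p\dim L(\mu)$ and the Casimir/central-charge identities \eqref{lambdas}. Your route is viable in principle, but it puts all the weight on a complete classification of orthogonal irreducible modules of index $<1$, which you assert rather than derive.

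That assertion is where the genuine gap lies: your candidate list is incomplete. At a minimum it omits $V=L_{C_n}(\omega_2)$, which is orthogonal, irreducible, and has index $\tfrac{n-1}{n+1}<1$ in the paper's (Killing-form) normalization, so it survives your filter and must be excluded by hand -- the paper does treat type $C_n$ explicitly. (Your list also includes some non-candidates: the spin representations of $B_5$, $B_6$ are symplectic, not orthogonal, and the half-spin representations of $D_n$ give nothing new.) Relatedly, the proposition does not assume $\k$ maximal in $so(V)$, so ``Dynkin's list of maximal subalgebras'' is not the right source of candidates; the correct filter is precisely the classification of index-$<1$ orthogonal irreducibles, for which one should cite \cite[Table 1]{AEV} as the paper does. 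Finally, the decisive elimination step is only sketched: for instance $\bigwedge^2L_{B_4}(\omega_4)=B_4\oplus L(\omega_3)$ is of the form $\mathrm{ad}(\k)\oplus(\text{irreducible})$, so your criterion (a) does \emph{not} kill it at once -- it is the level/Casimir computation (b) (equivalently, failure of \eqref{lambdas}) that rules it out, and that computation, together with the $C_n$ family, is the actual content of the proof and is left undone.
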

\begin{proof}  We will proceed to inspecting all cases; it will eventually turn out that cases $(B3)_{\omega_3},$ $(G2)_{\omega_1}$ are the only occurring. 
Let $\mu$ be a highest weight occurring in $\mathfrak p$.  First observe  $\mu$ belongs to the root lattice.
Indeed, recall that $V$ is irreducible and let $\l$ be its highest weight for some choice of a positive set of roots. Then, since $\bigwedge^2 V$ is a submodule of $V\otimes V$, the weights occurring in  $\bigwedge^2 V$ are of type $2\lambda+\eta$ with $\eta$ in the root lattice. Since   $\theta$ appears in the decomposition of $\bigwedge^2 V$ as a $\k$--module, we have that there is $\xi$ in the root lattice such that $2\lambda+\xi=\theta$, thus $2\lambda$ is in the root lattice. Thus any weight of $\bigwedge^2 V$ is in the root lattice.\par
Now, by \eqref{lambdas}, we have
$$\l_1^{tr_V}-\l_2^{tr_V}=\half(2g_1-\gamma)=2.$$
In turn, by Lemma \ref{ciserve} we have $Ind_\k V_\mu<1$ for any irreducible component $V_\mu$ of $\p$. Inspecting the list of irreducible representations with Dynkin index less than one given in \cite[Table 1]{AEV} and selecting the weights in the root lattice we have that  $\mu$ is necessarily a fundamental weight.  More precisely, using   Bourbaki's notation for Dynkin diagrams, we have : \par\vskip10pt
\centerline{
\begin{tabular}{ c | c |c |c |c }$\k$ & $B_n$ & $C_n$ & $F_4$ & $G_2$\\\hline
$\mu$ &$\omega_1$ & $\omega_2$ & $\omega_4$ & $\omega_1$ 
\end{tabular}
}
\vskip5pt
\centerline{\small Table 1}
\vskip10pt
If follows that every irreducible component of $\p$ must be of the form $L(\mu)$ with $\mu$ as in Table 1. Hence, as $\k$--modules
\begin{equation}\label{decso} so(V)=\k\oplus p\, L(\mu),\quad p\geq 1.\end{equation}
Let $(\cdot, \cdot)_n$ be  the normalized form of $\k$; setting $v=\dim V$, one has
\begin{equation}\label{l2norm}
\l_2^{(\cdot, \cdot)_n}=\frac{2 \dim\k\, h^\vee(v -4)}{\dim\k(v-4)+ v(v-1)}.
\end{equation}
Let us discuss type $G_2$. Taking dimensions in \eqref{decso} one has $v(v-1)/2=14+7 p$. Solving for $v$, the only positive solution is $v=(1+\sqrt{113+56 p})/2$.  Now we find, using \eqref{l2norm},  the values of $p$ such that 
$\l_2^{(\cdot, \cdot)_n}=(\omega_1,\omega_1+2\rho)=4$. One gets $p=1$ and $p=2$. In the former case one obtains $v=7$, which corresponds to $(G2)_{\omega_1}$; in the latter case one gets $v=8$, which is excluded since no irreducible representation of $G_2$ has dimension $8$. Type $F_4$ is  treated similarly: one gets a quadratic equation in $p$ which has no integral solution.\par
In type $B_n$ it is convenient to use a slightly different strategy. One checks, by direct computation, that $\l_1^{(\cdot, \cdot)_n}=4n-2, \l_2^{(\cdot, \cdot)_n}=2n, \gamma=\frac{4n}{n-1}$. Using the rightmost formula in \eqref{lambdas}, 
one obtains that $v=\frac{2(3n+p-2)}{n-1}=4+\frac{2(n+p)}{n-1}$, so that $p=q(n-1)-1$ and $v=2(3+q)$. Now equation $v(v-1)/2-\dim so(V)=0$ reads $n (1 + p) - 2 n^2 (1 + p) + 2 (8 + 6 p + p^2)=0$ or
$n-2n^2+2(5+p+\frac{3}{1+p})=0$. This implies $p\in\{1,2,5\}$, and one gets an integer value for $n$ only for $p=1$. More precisely, in that case $n=3$ and $v=8$, so we are in case $(B3)_{\omega_3}$.
Type $C_n$ is easier and treated similarly.
\end{proof}
  \subsection{Conformal embeddings of $\widetilde V(k,\k)$ in  $V_k(sp(V))$ with $\k$ simple.}
  
In this section we discuss the conformal embeddings of a simple Lie algebra $\k$ in $sp(V)$. More specifically we consider an irreducible finite dimensional representation $V$ of $\k$ admitting a $\k$--invariant nondegenerate symplectic form $\langle\cdot,\cdot\rangle $.
 
 From now on we will denote $sp(V,\langle\cdot,\cdot\rangle)$ simply as $sp(V)$.
We let  $\nu:\k\to sp(V)$ be the representation map. Let also $\tau:S^2 (V) \to sp(V)$ be  the linear isomorphism such that $\tau(u)(v)=i(v)(u)$, where $i$ is the contraction map, extended to $S^2( V)$ as an even derivation. More explicitly
$
\tau^{-1}(X)=\frac{1}{2}\sum_i X(v_i) v^i,
$
where $\{v_i\}$ is a basis of $V$ and $\{v^i\}$ is the corresponding dual basis (i.e. $\langle v_i,v^j\rangle=\delta_{ij}$).

Recall that a Lie superalgebra $\mathfrak r=\mathfrak r_0\oplus \mathfrak r_1$  is said to be of Riemannian type if it admits a nondegenerate supersymmetric even invariant form.

 \begin{prop}\cite{Kostsuper}\label{Ksuper}
Let $\k$ be a Lie algebra admitting a nondegenerate $ad$--invariant symmetric bilinear form $(\cdot,\cdot)$ and  $V$ an irreducible finite-dimensional representation of $\k$ admitting a nondegenerate symplectic $\k$--invariant form $\langle \cdot,\cdot\rangle$.

Then the space $\k\oplus V$ admits a Lie superalgebra structure such that the pair $(\k\oplus V,(\cdot,\cdot)\oplus\langle\cdot,\cdot\rangle)$ is of Riemannian type if and only if
\begin{equation}
\sum_i \tau^{-1}(X_i)^2=0,
\end{equation}
where $\{X_i\}$ is an orthogonal basis of $\k$.
 \end{prop}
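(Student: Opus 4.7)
The plan is to follow Kostant's strategy for the orthogonal case (compare the proof of Proposition \ref{strong}), replacing the Clifford algebra $Cl(V)$ by the Weyl algebra $W(V)$ of $(V,\langle\cdot,\cdot\rangle)$ and the exterior algebra $\bigwedge V$ by the symmetric algebra $S(V)$. Under the symmetrization isomorphism $S(V)\cong W(V)$, the subspace $S^2(V)$ becomes a Lie subalgebra of $W(V)$ (with bracket the Weyl-algebra commutator) canonically identified with $sp(V)$ via $\tau$, so that $\nu_* := \tau^{-1}\circ\nu : \k\to S^2(V)\subset W(V)$ is a Lie algebra homomorphism.

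First I would write down the only candidate bracket on $\mathfrak r=\k\oplus V$. On $\k\times\k$ it is the given Lie bracket, on $\k\times V$ it is $\nu$, and on $V\times V$ it is the symmetric bilinear map $[\cdot,\cdot]_V:V\otimes V\to\k$ uniquely determined by
\[
([y_1,y_2]_V,X) = -\langle y_1, X y_2\rangle, \qquad X\in\k,\ y_1,y_2\in V.
\]
Nondegeneracy of $(\cdot,\cdot)$ on $\k$ makes this well defined, and since $X$ is symplectic one has $\langle y_1,Xy_2\rangle = \langle y_2,Xy_1\rangle$, whence $[y_1,y_2]_V = [y_2,y_1]_V$, the correct super-symmetry for an odd-odd bracket. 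Super-invariance of $(\cdot,\cdot)\oplus\langle\cdot,\cdot\rangle$ on every pair of arguments is then immediate from the definitions.

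The only nontrivial part of the super-Jacobi identity is the case of three odd arguments. I would pair
\[
[[y_1,y_2]_V, y_3] + [[y_2,y_3]_V, y_1] + [[y_3,y_1]_V, y_2] = 0
\]
against an arbitrary $y_4 \in V$ via $\langle\cdot,\cdot\rangle$ and expand using the defining identity of $[\cdot,\cdot]_V$ together with the completeness relation for an orthonormal basis $\{X_i\}$ of $\k$. The left-hand side becomes a totally symmetric $4$-form in $(y_1,\dots,y_4)$ whose value is exactly the pairing of the monomial $y_1 y_2 y_3 y_4 \in S^4(V)$ with $\sum_i \tau^{-1}(X_i)^2 \in S^4(V)$ under the nondegenerate form on $S^4(V)$ induced by $\langle\cdot,\cdot\rangle$. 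Equivalently, at the Weyl-algebra level, this obstruction is the Bernstein-degree $4$ component of $\nu_*(C_\k) = \sum_i \nu_*(X_i)^2 \in W(V)$, whose symbol in $S^4(V)$ is precisely $\sum_i \tau^{-1}(X_i)^2$.

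Since the induced pairing on $S^4(V)$ is nondegenerate, the odd Jacobi identity is equivalent to $\sum_i \tau^{-1}(X_i)^2 = 0$. The lower-degree components of $\nu_*(C_\k)$ play no role in the obstruction: the degree-$0$ part acts as a scalar on $V$ and drops out of the Jacobi computation, while the degree-$2$ part is a $\k$-invariant element of $S^2(V)\cong sp(V)$ lying in the centralizer of $\k$, which vanishes by Schur's lemma applied to the irreducible module $V$ (no nonzero scalar operator lies in $sp(V)$). The main difficulty I expect is the combinatorial bookkeeping in the explicit expansion of the cyclic sum — tracking symplectic signs, the two possible Wick contractions arising when multiplying two quadratic elements of $W(V)$, and the factor coming from $\tau^{-1}(X) = \tfrac12 \sum_i X(v_i) v^i$ — all in direct parallel with the computation carried out for $so(V)$ in Lemma \ref{critKost}.
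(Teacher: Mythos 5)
The paper offers no proof of Proposition \ref{Ksuper} — it is quoted verbatim from Kostant \cite{Kostsuper} — so there is no in-house argument to compare against; your reconstruction is correct and follows what is essentially Kostant's own route, in parallel with the paper's treatment of the orthogonal case in Proposition \ref{strong} and Lemma \ref{critKost}. The structural points all check out: the odd--odd bracket is forced by invariance together with nondegeneracy of $(\cdot,\cdot)$ on $\k$, its symmetry is exactly the statement that each $X\in\k$ acts symplectically, the even--odd--odd Jacobi identity is the $\k$--equivariance of the map $S^2(V)\to\k$ (automatic, since it is built from invariant forms), and the sole obstruction is the three--odd identity. I also verified the computation you defer to ``bookkeeping'': pairing the cyclic sum $[[y_1,y_2],y_3]+[[y_2,y_3],y_1]+[[y_3,y_1],y_2]$ against $y_4$ and expanding $[y_a,y_b]=-\sum_i\langle y_a,X_iy_b\rangle X_i$ yields $\sum_i\bigl(\langle y_1,X_iy_2\rangle\langle y_3,X_iy_4\rangle+\langle y_1,X_iy_3\rangle\langle y_2,X_iy_4\rangle+\langle y_1,X_iy_4\rangle\langle y_2,X_iy_3\rangle\bigr)$ after using $\langle y,X_iz\rangle=\langle z,X_iy\rangle$, which is a nonzero multiple of the pairing of $y_1y_2y_3y_4$ with $\sum_i\tau^{-1}(X_i)^2$ in $S^4(V)$, so nondegeneracy of the induced form gives the stated equivalence. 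The one point worth making explicit is that the ``only if'' direction requires the (intended, and standard in \cite{Kostsuper}) reading that the superalgebra structure extends the given bracket on $\k$ and the action $\nu$ on $V$; with that understood, your proof is complete.
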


Let us return to our situation: we are assuming that $\k$ is a simple  Lie algebra and $V$ is an irreducible finite-dimensional representation of $\k$ admitting a nondegenerate invariant symplectic form. Let $(\cdot,\cdot)$ be the normalized invariant bilinear form on $sp(V)$. Recall that $(X,Y)=tr_V(XY)$.
For an  invariant symmetric bilinear form on $\k$ we choose  $(\cdot,\cdot)_{|\k\times \k}$.  Let $\mathfrak p$ be the orthogonal  complement of $\nu(\k)$ in $sp(V)$. 
Then $g_1=\half\l_1^{tr_V}$. By Corollary \ref{APC},  $C_\k$ acts scalarly on $\mathfrak p$ and the eigenvalue is
 $\gamma=\l_2^{tr_V}$. 

\begin{lemma}\label{critKostsuper}
Assume that $\widetilde V(k,\k)$ embeds conformally in $V_k(sp(V))$. 
Then either $\k+V$ admits the structure of a Lie superalgebra of Riemannian type, or 
 $k=1$ and 
$$
\gamma=\frac{(\dim V+4)\dim\k}{\dim sp(V)},\quad g_1=\half\gamma-1.
$$
\end{lemma}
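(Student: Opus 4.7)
The overall strategy is to imitate the proof of Lemma \ref{critKost}, replacing the exterior-algebra computation by an analogous one in the symmetric algebra $S(V)$ and invoking Proposition \ref{Ksuper} (Kostant's Lie superalgebra criterion) in place of its orthogonal counterpart. Concretely, I will analyze the element $\sum_i \tau^{-1}(X_i)^2 \in S^4(V)$: if it vanishes, Proposition \ref{Ksuper} immediately furnishes $\k\oplus V$ with the structure of a Lie superalgebra of Riemannian type; otherwise, it will force an algebraic identity between $g_1$ and $\gamma$, from which the stated formulas and the value $k=1$ drop out.

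The central computation expands
$$\sum_i \tau^{-1}(X_i)^2 = \tfrac{1}{4}\sum_{i,j,r} X_i(v_j)\,X_i(v_r)\, v^j v^r$$
in the commutative algebra $S(V)$, then applies the derivation identity $2 X_i(v_j) X_i(v_r) = X_i^2(v_j v_r) - X_i^2(v_j)v_r - v_j X_i^2(v_r)$ to convert $\sum_i X_i^2$ into the Casimir $C_\k$. The crucial new ingredient, replacing the tautology $v\wedge v=0$ used in the orthogonal case, is the identity $\sum_j v_j v^j = 0$ in $S(V)$: the tensor $\Omega = \sum_j v_j\otimes v^j \in V\otimes V$ is antisymmetric because $\langle\cdot,\cdot\rangle$ is, hence projects to zero in $S^2(V)$. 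This identity kills both the scalar contribution of the Casimir eigenvalue of $C_\k$ on $V$ and, after applying Corollary \ref{Actsscalar} to split $C_\k(v_j v_r) = (2g_1 - \gamma)\, p_\k(v_j v_r) + \gamma\, v_j v_r$, the $\gamma$-piece as well, leaving $\frac{2g_1 - \gamma}{8}\sum_{j,r} p_\k(v_j v_r)\, v^j v^r$.

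To evaluate $p_\k(v_j v_r)$, I extend $\langle\cdot,\cdot\rangle$ to $S^2(V)$ by the permanent formula $\langle ab, cd\rangle = \langle a,c\rangle\langle b,d\rangle + \langle a,d\rangle\langle b,c\rangle$, which is symmetric on $S^2(V)$ even though $\langle\cdot,\cdot\rangle$ on $V$ is antisymmetric. For an orthonormal basis $\{X_t\}$ of $\k$ relative to $(\cdot,\cdot)|_{\k\times\k} = tr_V|_{\k\times\k}$, a direct calculation gives $\langle u_i, u_j\rangle = -\tfrac{1}{2}\delta_{ij}$ with $u_t = \tau^{-1}(X_t)$, whence $p_\k(v_j v_r) = -2\sum_t \langle v_j, X_t(v_r)\rangle u_t$ — the same structural formula as in the orthogonal case, the signs conspiring thanks to the antisymmetry of $\langle\cdot,\cdot\rangle$ together with the $sp$-invariance of the $X_t$. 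Substituting back and using $\sum_r X_t(v_r) v^r = 2 u_t$ produces the master identity
$$\sum_i \tau^{-1}(X_i)^2 \;=\; \frac{\gamma - 2g_1}{2}\sum_t \tau^{-1}(X_t)^2.$$

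The dichotomy is now immediate: if the sum is zero, Proposition \ref{Ksuper} provides the Riemannian Lie superalgebra structure on $\k \oplus V$; otherwise $\gamma - 2g_1 = 2$, i.e.\ $g_1 = \tfrac{1}{2}\gamma - 1$, and substituting into the AP-criterion \eqref{numcheck2} gives $k = \gamma/2 - g_1 = 1$. Matching the Sugawara central charges of $\widetilde V(1,\k)$ and $V_1(sp(V))$ at $k=1$, using $h^\vee_{sp(V)} = \dim V/2 + 1$, then fixes $\gamma = (\dim V + 4)\dim\k/\dim sp(V)$. The main delicacy I anticipate is sign bookkeeping: establishing $\sum_j v_j v^j = 0$ from antisymmetry of the symplectic form, and tracing how this interacts with the symmetric product so as to reproduce the exact structural pattern of the orthogonal computation, with the single sign flip that converts $2g_1-\gamma = 4$ (and $k=-2$) there into $2g_1 - \gamma = -2$ (and $k=+1$) here.
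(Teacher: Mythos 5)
Your proposal is correct and follows essentially the same route as the paper: expand $\sum_i\tau^{-1}(X_i)^2$ in $S(V)$, use $\sum_j v_jv^j=0$ and the scalar action of $C_\k$ on $\p$ to reduce to the $p_\k$-term, derive the self-reproducing identity forcing $2g_1-\gamma=-2$ when the sum is nonzero, and invoke Proposition \ref{Ksuper} otherwise. The only differences are cosmetic (your permanent-normalization of the form on $S^2(V)$ gives $\langle u_i,u_j\rangle=-\tfrac12\delta_{ij}$ instead of the paper's $-\tfrac14\delta_{ij}$, but the resulting projection formula and master identity coincide), and your final formulas agree with the lemma as stated.
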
\begin{proof}
 Let $p_\k$, $p_{\mathfrak p}$ be the  projections of $S^2 (V)$ onto $\k$ and $\mathfrak p$ respectively corresponding to the direct sum $S^2(\k)=\k\oplus\mathfrak p$.

Write explicitly
\begin{align*}
\sum_i (\tau^{-1}&(X_i))^2 =\frac{1}{4}\sum_{i,j,r} X_i(v_j)X_i(v_r)v^j v^r\\&=\frac{1}{8}\sum_{i,j,r} X^2_i(v_j v_r)v^jv^r-\frac{1}{8}\sum_{i,j,r} X^2_i(v_j)v_r v^jv^r-\frac{1}{8}\sum_{i,j,r} v_jX^2_i( v_r) v^j v^r\\
&=\frac{1}{8}\sum_{i,j,r} X^2_i(v_j v_r)v^j v^r-\frac{\lambda}{4}\sum_{j,r} v_j v_r v^j v^r.\end{align*}
Here $\lambda$ is the eigenvalue of the action of $C_\k$ on $V$. Noting that $\sum_{i} v_i  v^i=0$  we obtain $\sum_{j,r} v_j v_r v^j v^r=0$, hence
$
\sum_i (\tau^{-1}(X_i))^2=\frac{1}{8}\sum_{i,j,r} X^2_i(v_jv_r)v^j v^r.
$
Write now
\begin{align*}
\sum_{i,j,r} X^2_i(v_j v_r) v^j v^r&=\sum_{i,j,r} X^2_i(p_\k(v_j v_r)) v^j v^r+\sum_{i,j,r} X^2_i(p_{\mathfrak p}(v_j v_r)) v^j v^r\\
&=2g_1\sum_{j,r} p_\k(v_j v_r) v^j v^r+\gamma\sum_{j,r} p_{\mathfrak p}(v_j v_r) v^j v^r\\
&=(2g_1-\gamma)\sum_{j,r} p_\k(v_j v_r) v^j v^r+\gamma\sum_{j,r} v_j v_r v^j v^r\\
&=(2g_1-\gamma)\sum_{j,r} p_\k(v_j v_r) v^j v^r.
\end{align*}

We now compute $p_\k(v_j v_r)$ explicitly. We extend $\langle\cdot,\cdot\rangle$ to $S^2(V)$ by restricting to symmetric tensors the form $\langle \cdot,\cdot\rangle\otimes \langle \cdot,\cdot\rangle$ on $V\otimes V$. 

Set $u_i=\tau^{-1}(X_i)$ and note that
\begin{align*}\langle u_i,u_j\rangle&=\frac{1}{8}\sum_{r,s}(\langle X_i(v_r),X_j(v_s)\rangle\langle v^r,v^s\rangle+\langle X_i(v_r),v^s\rangle\langle v^r,X_j(v_s)\rangle)\\
&=\frac{1}{8}\sum_{r,s}(-\langle X_jX_i(v_r),v_s\rangle\langle v^r,v^s\rangle+\langle X_i(v_r),v^s\rangle\langle v^r,X_j(v_s)\rangle)\\
&=\frac{1}{8}\sum_{r}(\langle v^r,X_jX_i(v_r)\rangle+\langle v^r,X_jX_i(v_r)\rangle)=-\frac{1}{4}tr(X_jX_i).
\end{align*}
 
Recall that $tr(X_jX_i)=(X_j,X_i)$, hence $\langle u_i,u_j\rangle=-\delta_{ij}\frac{1}{4}$; therefore
\begin{align*}
 p_\k(v_j  v_r)=&- 4\sum_t\langle v_j v_r,u_t\rangle u_t=- 2\sum_{t,k}\langle v_j v_r,X_t(v_k) v^k\rangle u_t\\
 &=- \sum_{t,k}(\langle v_j,X_t(v_k)\rangle \langle v_r,v^k\rangle+\langle v_r,X_t(v_k)\rangle\langle v_j,v^k\rangle) u_t=- 2\sum_{t}\langle v_j,X_t(v_r)\rangle u_t.
\end{align*}
 
 Upon substituting, we find that
 \begin{align*}
\sum_i (\tau^{-1}(X_i))^2&=-\frac{2g_1-\gamma}{4}\sum_{j,r,t} \langle v_j,X_t(v_r)\rangle u_t v^j v^r\\
&=-\frac{2g_1-\gamma}{8}\sum_{j,r,t,k} \langle v_j,X_t(v_r)\rangle X_t(v_k) v^k v^j v^r\\
&=-\frac{2g_1-\gamma}{8}\sum_{r,t,k}  X_t(v_k)v^k X_t(v_r) v^r\\&=-\frac{2g_1-\gamma}{2}\sum_t (\tau^{-1}(X_t))^2.
\end{align*}

If $\sum_t (\tau^{-1}(X_t))^2=0$, then, by Proposition \ref{Ksuper}, $\k+V$ has the structure of a Lie superalgebra of Riemannian type.
Otherwise, 
we must have that 
\begin{equation}
2g_1-\gamma={-2}.
\end{equation} 

Recall that, by AP-criterion, $\frac{\gamma}{2(k+g_1)}=1$ so $k=\frac{\gamma-2g_1}{2}=1$.
Finally, as central charges must be equal, we obtain
$$
\frac{\dim sp(V)}{\dim V/2+2}=\frac{\dim \k}{g_1+1},
$$
so
\begin{equation}
g_1=\frac{(\dim V/2+2)\dim\k}{\dim sp(V)}-2,\quad\gamma=\frac{(\dim V+4)\dim\k}{\dim sp(V)}.
\end{equation}

 \end{proof}

 \begin{proposition}\label{cesp} Let $\k$ be a simple Lie algebra and $V$ an irreducible symplectic representation of $\k$. If  $\widetilde V(k,\k)$ is conformally embedded in $V_k(sp(V))$, then either $k=1$ or $\k=sp(V)$.
 \end{proposition}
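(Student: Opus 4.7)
The plan is to leverage Lemma~\ref{critKostsuper}, which already disposes of one alternative: if $\sum_t (\tau^{-1}(X_t))^2 \ne 0$ the lemma forces $k=1$ and we are done. Otherwise, Proposition~\ref{Ksuper} endows $\mathfrak{r} := \k \oplus V$ with the structure of a Lie superalgebra of Riemannian type, having $\mathfrak{r}_0 = \k$ as even part and $\mathfrak{r}_1 = V$ as odd part, and the task reduces to deducing $\k = sp(V)$ from this finite-dimensional super-datum.

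The first step is to verify that $\mathfrak{r}$ is a \emph{simple} Lie superalgebra. Any graded ideal $\mathfrak{i} = \mathfrak{i}_0 \oplus \mathfrak{i}_1$ has $\mathfrak{i}_0$ an ideal of the simple $\k$ and $\mathfrak{i}_1$ a $\k$-submodule of the irreducible module $V$, so each component is zero or total. The combinations $(0,0)$ and $(\k,V)$ are trivial; $(\k,0)$ is impossible because $V$ is a faithful nontrivial $\k$-module. The one delicate case, $\mathfrak{i}_0 = 0$, $\mathfrak{i}_1 = V$, would force $[V,V] = 0$; to rule this out I would invoke invariance of the supersymmetric form, which for $X, Y \in V$ and $Z \in \k$ yields an identity of the shape $([X,Y],Z) = \pm \langle Y, Z \cdot X\rangle$. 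Under the assumption $[V,V]=0$ this would give $\langle Y, \k \cdot X\rangle = 0$ for all $X, Y \in V$; since for $0 \ne X$ the subspace $\k \cdot X$ is a nonzero $\k$-submodule of the irreducible $V$ and hence equal to $V$, the symplectic form would be totally isotropic, contradicting nondegeneracy.

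Once $\mathfrak{r}$ is simple and carries a nondegenerate invariant supersymmetric bilinear form, it is a basic classical Lie superalgebra in the sense of Kac, since the strange families $P(n), Q(n)$ and the Cartan-type Lie superalgebras admit no such form. Scanning the classification, in each of $A(m,n)$, $B(m,n)$ with $m\ge1$, $C(n)$, $D(m,n)$, $D(2,1;\alpha)$, $F(4)$, $G(3)$, the even part splits as a nontrivial direct sum of at least two reductive ideals and thus cannot coincide with the simple $\k$. The only remaining entry is $B(0,n) = osp(1|2n)$, whose even part is $sp(2n)$ and whose odd part is the standard $2n$-dimensional symplectic module, forcing $\k = sp(V)$. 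The step I expect to be genuinely delicate is the invariance-of-form computation in Step~1 used to exclude $[V,V]=0$; once that is settled, the remainder amounts to reading off Kac's list.
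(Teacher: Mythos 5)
Your argument is correct and follows essentially the same route as the paper: Lemma~\ref{critKostsuper} reduces the problem to the Riemannian-type superalgebra $\k\oplus V$, simplicity is checked on graded ideals with the same invariance-of-form identity used to exclude $[V,V]=0$, and Kac's classification then leaves only $B(0,n)=osp(1|2n)$, i.e.\ $\k=sp(V)$. (One small slip: $\k\cdot X$ need not be a $\k$-submodule of $V$, but this is immaterial, since nondegeneracy of $\langle\cdot,\cdot\rangle$ applied to $\langle Y,Z\cdot X\rangle=0$ for all $Y\in V$ already forces $[\k,V]=0$, which is exactly the paper's conclusion.)
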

\begin{proof} Assume $k\ne 1$. By Lemma \ref{critKostsuper}, $\k+V$ admits the structure of a Lie superalgebra of Riemannian type. We claim that $\k'=\k+V$ is a simple Lie superalgebra. 
Assume $\mathfrak i$ is a graded ideal of $\k'$; then $\mathfrak i=(\mathfrak i\cap\k)\oplus(\mathfrak i\cap V)$  and since 
 $\k$ is simple and $V$ is irreducible, either $\k=\mathfrak i$ or $\mathfrak i = V$. In the former case, $[\mathfrak i,V]\subset \mathfrak i=\k$ and  $[\mathfrak i,V]=[\k,V]\subset V$. Hence $[\k,V]=0$, so that  
 $V$ is the $1$--dimensional trivial representation, which is not symplectic. In the latter case then $[V,V]\subset \k$ and $[V,V]\subset\i=V$ hence $[V,V]=0$. By   invariance  of the bilinear form of $\k'$, we have that 
 $([\k,V],V)=(\k,[V,V])=0$; by non-degeneracy $[\k,V]=0$, hence again $V$ is the  trivial representation. By Kac classification of finite dimensional simple Lie superalgebras \cite{Kacsuper}, there are three simple Lie superalgebras $\mathfrak l$, for which $\mathfrak l_{\ov 0}$ is simple:
$B(0,n)$ and the strange ones, $P(n)$ and $Q(n)$.  But the strange ones have no even non-zero invariant bilinear form. 
We are left with $B(0,n)$, which corresponds to $\k'=sp(V)\oplus V$.
 \end{proof} 
 
 \begin{rem}
 In the case when $\k$ is semisimple but not simple, we can  have  conformal embeddings in $V_k(sp(V))$  when $k =-1/2$. One example is $\k = sl(2) \times so(m)$ and $V$ is $2m$--dimensional  irreducible $\k$--module isomorphic to the tensor product 
 $V_{sl(2)} (\omega_1) \otimes V_{so(m)} (\omega_1)$. Such conformal embeddings will be studied in  Sections \ref{analysis} and \ref{analysis-II}. 
 \end{rem}
 
   \subsection{Conformal embeddings of $\widetilde V(k,\k)$ in  $V_k(sl(V))$ with $\k$ simple.}
   
In this section we discuss the conformal embeddings of a simple Lie algebra $\k$ in $sl(V)$. More specifically we consider an irreducible finite dimensional representation $V$ of $\k$.
 
 Let  $\tau:V\otimes V^* \to gl(V)$ be  the linear isomorphism such that $\tau(v\otimes \l)(w)=\lambda(w)v$. More explicitly
$
\tau^{-1}(X)=\sum_i X(v_i) \otimes v^i,
$
where $\{v_i\}$ is a basis of $V$ and $\{v^i\}$ is the corresponding dual basis in $V^*$. Let also $\tau^*$ be the corresponding map form $V^*\otimes V$ to $gl(V^*)$.
Restricting $\tau^{-1}$ to $sl(V)$ we obtain a monomorphism of $\k$--modules $\tau^{-1} : sl(V)\to V\otimes V^*$.

Assume that $\widetilde V(k,\k)$ embeds conformally in $V_k(sl(V))$.
We choose $(\cdot,\cdot)$ to be the normalized invariant form on $sl(V)$. Recall that $(X,Y)=tr_V(XY)$.
As an invariant symmetric form on $\k$ we choose  $(\cdot,\cdot)_{|\k\times \k}$, so that $g_1=\half \lambda_1^{tr_V}$ and $\gamma=\lambda_2^{tr_V}$. 

\begin{lemma}\label{critKostsuperslV}
Assume that $\widetilde V(k,\k)$ embeds conformally in $V_k(sl(V))$. Then either $\k=sl(V)$ or  $k=\pm 1$. Moreover, 
\begin{equation}\label{lambdaslv}
g_1=\frac{\dim\k}{\dim V\mp 1} \mp 1,\quad\gamma=\frac{2\dim\k}{\dim V\mp 1},
\end{equation}
where we take the upper sign for $k=1$, the lower sign for $k=-1$.
\end{lemma}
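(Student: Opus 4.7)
The plan is to follow the blueprint of the proofs of Lemmas~\ref{critKost} and~\ref{critKostsuper}, now working inside the associative algebra $End(V)=V\otimes V^*$ in place of the Clifford algebra of $V$ (used in the $so$ case) or the symmetric algebra of $V$ (used in the $sp$ case). The genuinely new feature here is that, as a $\k$--module,
$$V\otimes V^* \;=\; \k \;\oplus\; \p \;\oplus\; \CC\cdot Id,$$
with an extra trivial summand $\CC\cdot Id$ that was absent in the previous cases. Tracking this extra summand is exactly what will yield the two admissible values $k=\pm 1$, in contrast with the single value obtained for $so(V)$ and $sp(V)$.

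Two of the three unknowns $k, g_1, \gamma$ are pinned down by standard input. The AP-criterion, in the form~\eqref{numcheck2}, gives immediately $\gamma = 2(k+g_1)$. Equality of the Sugawara central charges---using that $tr_V$ is the normalized form on $sl(V)$, so the level of $\k$ is again $k$---reads
$$\frac{k\,\dim sl(V)}{k+\dim V} \;=\; \frac{k\,\dim\k}{k+g_1},$$
equivalently $k(\dim sl(V)-\dim\k)=\dim\k\,\dim V - g_1\,\dim sl(V)$. This leaves one free parameter, to be fixed by a third, Kostant-type relation.

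The heart of the argument is the derivation of that third relation, which I expect to take the form $k=\pm 1$. Mimicking the tensor shuffles in the quoted lemmas, I would choose an orthonormal basis $\{X_i\}$ of $\k$ with respect to $tr_V$, exploit that $C_\k$ acts on $V$ by $\lambda = \dim\k/\dim V$ (immediate from $\sum_i tr_V(X_i^2)=\dim\k$ and irreducibility of $V$), and form
$$\Omega \;=\; \sum_i \tau^{-1}(X_i)\otimes \tau^{-1}(X_i) \;=\; \sum_{i,j,r}X_i(v_j)\otimes v^j\otimes X_i(v_r)\otimes v^r \;\in\; S^2(V\otimes V^*).$$
Expanding $\sum_i X_i^2(v_j\otimes v^r)$, splitting $v_j\otimes v^r$ along the decomposition $V\otimes V^*=\k\oplus\p\oplus\CC\cdot Id$, and using the orthogonal projection formulas
$$p_\k(v_j\otimes v^r)\;=\;\sum_i\langle v^r, X_i v_j\rangle\,X_i,\qquad p_{\CC\cdot Id}(v_j\otimes v^r)\;=\;\tfrac{\delta_{jr}}{\dim V}\,Id$$
(the first derived exactly as in the proofs of Lemmas~\ref{critKost} and~\ref{critKostsuper}), I expect a self-referential identity of the same nature as in those lemmas, but now carrying an extra scalar correction supported on $\CC\cdot Id\otimes\CC\cdot Id$. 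Provided $\p\ne 0$ (i.e.\ $\k\subsetneq sl(V)$), this forces $\tfrac{\gamma-2g_1}{2}=\pm 1$, hence $k=\gamma/2-g_1=\pm 1$.

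The main obstacle is precisely the careful bookkeeping of the $\CC\cdot Id$--component: in the $so$ and $sp$ proofs, the wedge or symmetric structure automatically killed the analogous contribution, whereas here it survives and must be controlled to show that both signs $\pm 1$ are admissible and no other solutions appear. Once $k=\pm 1$ is established, elementary algebra on the central-charge equation gives $g_1=\dim\k/(\dim V\mp 1)\mp 1$, and then $\gamma=2(k+g_1)=2\dim\k/(\dim V\mp 1)$, matching~\eqref{lambdaslv}.
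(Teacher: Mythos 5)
Your proposal follows the same route as the paper's proof: form the Casimir-type tensor $\sum_i\tau^{-1}(X_i)\otimes\tau^{-1}(X_i)$, expand $\sum_i X_i^2(v_j\otimes v^r)$ along $V\otimes V^*=\k\oplus\p\oplus\C\,\mathrm{Id}_V$ with exactly the projection formulas you state, and read off a self-referential identity whose recurring coefficient $(2g_1-\gamma)^2/4$ must equal $1$, giving $2g_1-\gamma=\pm2$, $k=\pm1$, and then \eqref{lambdaslv} from the central-charge equation. The only differences are cosmetic: the paper works with $\sum_i\tau^{-1}(X_i)\otimes(\tau^*)^{-1}(X_i)$ in $V\otimes V^*\otimes V^*\otimes V$ and secures the needed linear independence of the three resulting tensors by identifying them under $\iota$ with $-p_\k$, $I_{gl(V)}$ and $(\dim V)\,p_{\C I_V}$ (which fails precisely when $\p=0$, i.e.\ $\k=sl(V)$), while you additionally observe the shortcut $\lambda=\dim\k/\dim V$, which the paper instead recovers as the vanishing of the $\C\,\mathrm{Id}_V$-coefficient.
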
\begin{proof}
 Let $p_\k, p_{\mathfrak p}, p_{\C I_V}$ be the  projections of $V\otimes V^*$ onto $\k, \mathfrak p$ and $\C I_V$ respectively corresponding to the direct sum $V\otimes V^*=\k\oplus\mathfrak p\oplus \C \sum_i v_i\otimes v^i$.
Write explicitly
\begin{align*}
&\sum_i \tau^{-1}(X_i)\otimes  (\tau^*)^{-1}(X_i) =\sum_{i,j,r} X_i(v_j)\otimes v^j\otimes X_i(v^r)\otimes v_r\\
&=\sum_{i,j,r} \sigma_{23}(X_i(v_j)\otimes X_i( v^r)\otimes v^j\otimes v_r)\\
&=\half\sum_{i,j,r}\sigma_{23} (X^2_i(v_j\otimes v^r)\otimes v^j\otimes v_r)-\frac{1}{2}\sum_{i,j,r} X^2_i(v_j)\otimes v^j\otimes v^r \otimes v_r
\\
&-\frac{1}{2}\sum_{i,j,r} v_j\otimes v^j\otimes X^2_i( v^r)\otimes v_r\\
&=\frac{1}{2}\sum_{i,j,r} \sigma_{23}(X^2_i(v_j\otimes v^r)\otimes v^j \otimes v_r)-\lambda\sum_{j,r} v_j\otimes  v^j\otimes v^r \otimes v_r.
\end{align*}
Here $\lambda$ is the eigenvalue of the action of $C_\k$ on $V$. 
Write now
\begin{align*}
\sum_{i,j,r} &\sigma_{23}(X^2_i(v_j\otimes v^r)\otimes v^j \otimes v_r)=\sum_{i,j,r} \sigma_{23}(X^2_i(p_\k(v_j\otimes v^r)) \otimes v^j \otimes v_r\\&+\sum_{i,j,r}  \sigma_{23}X^2_i(p_{\mathfrak p}(v_j \otimes v^r))\otimes  v^j \otimes v_r+\sum_{i,j,r}  \sigma_{23}X^2_i(p_{\C\sum v_i\otimes v^i}(v_j \otimes v^r))\otimes  v^j \otimes v_r\\
&=2g_1\sum_{j,r} \sigma_{23}(p_\k(v_j \otimes v^r)\otimes  v^j \otimes v_r)+\gamma\sum_{j,r}\sigma_{23}( p_{\mathfrak p}(v_j \otimes v^r)\otimes  v^j\otimes v_r).
\end{align*}
In the last equality we used the fact that $C_\k$ acts trivially on $\sum v_i\otimes v^i$. Thus
\begin{align*}\sum_{i,j,r} &\sigma_{23}(X^2_i(v_j\otimes v^r)\otimes v^j \otimes v_r)
=(2g_1-\gamma)\sum_{j,r}\sigma_{23}( p_\k(v_j\otimes  v^r) \otimes v^j\otimes  v_r)\\&+\gamma\sum_{j,r}\sigma_{23}(p_{sl(V)}( v_j \otimes v^r)\otimes v^j \otimes  v_r)
\end{align*}
To compute $p_{sl(V)}( v_j \otimes v^r)$ we use the trace form on $V\otimes V^*$, so that, if  $\langle v\otimes \lambda,w\otimes \mu\rangle=\mu(v)\lambda(w)$, then 
$$
p_{sl(V)}(X)=X-\frac{\langle X,\sum v_i\otimes v^i\rangle}{\langle \sum v_i\otimes v^i,\sum v_i\otimes v^i\rangle} \sum_i v_i\otimes v^i.
$$
In particular
$$
p_{sl(V)}(v_j\otimes v^r)=v_j\otimes v^r-\frac{\delta_{jr}}{\dim V} \sum_i v_i\otimes v^i.
$$

We now compute $p_\k(v_j\otimes v^r)$ explicitly. 
Set $u_i=\tau^{-1}(X_i)$ and note that
\begin{align*}\langle u_i,u_j\rangle&=\sum_{r,s}v^s( X_i(v_r))v^r(X_j(v_s)) =\sum_{s}v^s( X_jX_i(v_s))=tr(X_jX_i).
\end{align*}
Recall that $tr(X_jX_i)=(X_j,X_i)$, hence $\langle u_i,u_j\rangle=\delta_{ij}$; therefore
\begin{equation}\label{pg}
 p_\k(v_j \otimes v^r)=\sum_t\langle v_j \otimes v^r,u_t\rangle u_t=\sum_{t,k}\langle v_j \otimes v^r,X_t(v_k) \otimes v^k\rangle u_t=\sum_{t}v^r(X_t(v_j)) u_t.\end{equation}
 
 Substituting we find that
 \begin{align*}
&\sum_i \tau^{-1}(X_i)\otimes (\tau^*)^{-1}(X_i)\\&=\half(2g_1-\gamma)\sum_{j,r}\sigma_{23}(p_\k(v_j\otimes v^r)\otimes v^j\otimes v_r)+\half\gamma\sum_{j,r}\sigma_{23}(p_{sl(V)}( v_j \otimes v^r)\otimes v^j \otimes  v_r)\\&-\lambda\sum_{j,r} v_j\otimes  v^j\otimes v^r \otimes v_r
\\&=\half(2g_1-\gamma)\sum_{j,r,t}\sigma_{23}(v^r(X_t(v_j)))u_t\otimes v^j\otimes v_r)
+\half\gamma\sum_{j,r} v_j \otimes v^j\otimes v^r \otimes  v_r\\&-\half\gamma\sum_{j,i}\frac{1}{\dim V}v_i\otimes v^j\otimes v^i\otimes v_j
-\lambda\sum_{j,r} v_j\otimes  v^j\otimes v^r \otimes v_r\\
&=-\half(2g_1-\gamma)\sum_{r,t,i}X_t(v_i)\otimes X_t(v^r)\otimes v^i\otimes v_r)
+\half\gamma\sum_{j,r} v_j \otimes v^j\otimes v^r \otimes  v_r\\&-\half\gamma\sum_{j,i}\frac{1}{\dim V}v_i\otimes v^j\otimes v^i\otimes v_j
-\lambda\sum_{j,r} v_j\otimes  v^j\otimes v^r \otimes v_r.
\end{align*}

 Now write 
\begin{align*}
 \sum_{t,r,i} &X_t(v_i)\otimes X_t(v^r)\otimes v^i \otimes v_r=\half \sum_{t,r,i} X_t^2(v_i\otimes v^r)\otimes v^i \otimes v_r -\lambda \sum_{r,i} v_i\otimes v^r\otimes v^i \otimes v_r\\
 &=\half(2g_1-\gamma)\sum_{r,i} p_\k(v_i\otimes v^r)\otimes v^i \otimes v_r +\half\gamma\sum_{r,i} p_{sl(V)}(v_i\otimes v^r)\otimes v^i \otimes v_r\\&-\lambda \sum_{r,i} v_i\otimes v^r\otimes v^i \otimes v_r\\
&=\half(2g_1-\gamma)\sum_{r,i,t} v^r(X_t(v_i)) u_t\otimes v^i \otimes v_r +\half\gamma\sum_{r,i} v_i\otimes v^r\otimes v^i \otimes v_r\\&-\half\gamma \sum_{r,i,s} \frac{\delta_{ri}}{\dim V}v_s\otimes v^s\otimes v^i \otimes v_r-\lambda \sum_{r,i} v_i\otimes v^r\otimes v^i \otimes v_r\\
&=-\half(2g_1-\gamma)\sum_{r,t,j}  X_t(v_j)\otimes v^j \otimes X_t(v^r)\otimes v_r +\half\gamma\sum_{r,i} v_i\otimes v^r\otimes v^i \otimes v_r\\&-\half\gamma \sum_{r,s} \frac{1}{\dim V}v_s\otimes v^s\otimes v^r \otimes v_r-\lambda \sum_{r,i} v_i\otimes v^r\otimes v^i \otimes v_r.
\end{align*}

Summing up we obtain
\begin{align}\label{formulona}&
\sum_i \tau^{-1}(X_i)\otimes  (\tau^*)^{-1}(X_i) =\frac{(2g_1-\gamma)^2}{4}\sum_i \tau^{-1}(X_i)\otimes  (\tau^*)^{-1}(X_i)\\\notag &(-\frac{(2g_1-\gamma)\gamma}{4}+\frac{(2g_1-\gamma)\lambda}{2}-\frac{\gamma}{2\dim V})\sum_{r,i} v_i\otimes v^r\otimes v^i \otimes v_r\\\notag &+(\frac{(2g_1-\gamma)\gamma}{4\dim V} +\half\gamma-\lambda)\sum_{r,s}v_s\otimes v^s\otimes v^r \otimes v_r.
\end{align}


Consider the natural identifications
$$\iota: V\otimes V^*\otimes V^*\otimes V\to gl(V)\oplus gl(V^*)\to gl(V)\otimes gl(V)^*\to End(gl(V))$$
Also recall that $gl(V)=\k\oplus\mathfrak p\oplus \C I_V$. We claim that
\begin{align}\label{prima}
&\iota(\sum_i \tau^{-1}(X_i)\otimes  (\tau^*)^{-1}(X_i))=-p_\k,\\\label{seconda}
&\iota(\sum_{r,i} v_i\otimes v^r\otimes v^i \otimes v_r)=I_{gl(V)},\\\label{terza}
&\iota(\sum_{r,s}v_s\otimes v^s\otimes v^r \otimes v_r)= (\dim V) p_{\C I_V}.
\end{align}
We check \eqref{prima}:
\begin{align*}
\iota(\sum_i \tau^{-1}(X_i)\otimes  (\tau^*)^{-1}(X_i))(v_r\otimes v^s)&=\sum_{i,h,k}(X_i(v^h)\otimes v_h)(v_r \otimes v^s) X_i(v_k)\otimes  v^k
 \\
 &=-\sum_{i,h,k}v^h(X_i(v_r)) v^s(v_h)X_i(v_k)\otimes  v^k
 \\
 &=-\sum_{i,k}v^s(X_i(v_r))X_i(v_k)\otimes  v^k,
\end{align*}
which is, by \eqref{pg}, the required relation. 
Relations \eqref{seconda}, \eqref{terza} are proven with a similar and easier  straightforwad computation.\par
Hence we conclude that the three tensors $\sum_i \tau^{-1}(X_i)\otimes  (\tau^*)^{-1}(X_i), \sum_{r,i} v_i\otimes v^r\otimes v^i \otimes v_r, \sum_{r,s}v_s\otimes v^s\otimes v^r \otimes v_r$ are linearly independent 
unless $\mathfrak p=0$. In the latter case $\k=sl(V)$; in the former we get, in particular, 
\begin{equation}\label{ll}
\lambda=\frac{(2g_1-\gamma)\gamma}{4\dim V} +\half\gamma.
\end{equation}
Substituting \eqref{ll} in the coefficient of $\sum_{r,i} v_i\otimes v^r\otimes v^i \otimes v_r$ in \eqref{formulona}  we obtain for this coefficient the expression
$\l_2((\l_1-\l_2)^2-4)/(8\dim V).$ Thus \eqref{formulona} becomes 
\begin{align}\label{formulona2}
\sum_i \tau^{-1}(X_i)\otimes  (\tau^*)^{-1}(X_i) &=\frac{(2g_1-\gamma)^2}{4}\sum_i \tau^{-1}(X_i)\otimes  (\tau^*)^{-1}(X_i)\\\notag &+\frac{\l_2((\l_1-\l_2)^2-4)}{8\dim V}\sum_{r,i} v_i\otimes v^r\otimes v^i \otimes v_r.
\end{align}
Hence we obtain 
\begin{equation}\label{finale}
2g_1-\gamma=\pm 2.
\end{equation}
By AP-criterion, $\frac{\gamma}{2(k+g_1)}= 1$ so $k=\frac{\gamma-2g_1}{2}=\pm 1$.

Finally, as central charges must be equal, we obtain (upper sign for $k=1$, lower sign for $k=-1$)
$$
\frac{\dim sl(V)}{\dim V\pm 1}=\frac{\dim \k}{g_1\pm1},
$$
which gives \eqref{lambdaslv}.
 \end{proof}
 Recall from  \cite{AP-Sigma} the following result.
  \begin{itemize}
 \item[$(C_n)_{\omega_1}$:] let $\k$ be of type $C_n$, let $V= L_{C_n} (\omega_1)$  be  the irreducible $2n$--dimensional $C_n$--module. Then we have a conformal embedding of $V_{-1} (C_n)$ into $V_{-1} (A_{2n-1}) = V_{-1} (sl(V))$.
 \end{itemize} 
 \begin{proposition}\label{cesl} Let $\k$ be a simple Lie algebra and $V$ an irreducible representation of $\k$. If  $\widetilde V(k,\k)$ is conformally embedded in $V_k(sl(V))$, then either $k=1$ or $\k=sl(V)$ or
we are in case $(C_n)_{\omega_1}$.
\end{proposition}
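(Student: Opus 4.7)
\emph{Plan of proof.} The starting point is Lemma \ref{critKostsuperslV}: excluding the trivial case $\k = sl(V)$, one necessarily has $k = \pm 1$, and $k=1$ is part of the allowed conclusion. So I assume $k=-1$ and $\k\ne sl(V)$. Then \eqref{lambdaslv} yields $2g_1-\gamma=2$, i.e.\ $\lambda_1^{tr_V}-\lambda_2^{tr_V}=2>1$. The AP-criterion \eqref{numcheck2} forces each irreducible component of $\mathfrak p$ to contribute the same Casimir eigenvalue, so $C_\k^{tr_V}$ acts scalarly on $\mathfrak p\subset sl(V)$ and the hypotheses of Lemma \ref{lemmat} are in place (if $V$ happens to be self-dual orthogonal one applies the lemma with $s(V)=so(V)$; scalarity forces the eigenvalue on $so(V)\ominus\k$ to again be $\gamma$, giving the same conclusion). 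Hence $Ind_\k(V)<1$.

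Armed with this Dynkin-index bound, I appeal to the classification of irreducible representations of simple Lie algebras with $Ind<1$, collected in \cite[Table 1]{AEV}. The list comprises a few infinite families (the standard representations of $A_n,B_n,C_n,D_n$ together with their duals, plus spin/half-spin representations of $B_n,D_n$ at the low values of the rank for which the index stays below $1$) and the minuscule fundamental modules of the exceptional algebras: the $7$-dimensional $G_2$-module, the $26$-dimensional $F_4$-module, the $27$-dimensional $E_6$-module (and its dual), and the $56$-dimensional $E_7$-module.

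For each candidate pair $(\k,V)$ I have to match two values of $g_1$. On the one hand the central-charge equality at $k=-1$, recorded in \eqref{lambdaslv}, demands
\[
g_1=\frac{\dim\k+\dim V+1}{\dim V+1}.
\]
On the other hand, by \eqref{ciserve}, $g_1=1/\bigl(2\,Ind_\k(V)\bigr)$ is intrinsically fixed by the pair. For $\k=A_n$ with $V=L(\omega_1)$ or $L(\omega_n)$ both sides equal $n+1$, but $\k=sl(V)$ and is excluded. For $\k=C_n$ with $V=L(\omega_1)$ both sides equal $n+1$ and this is precisely $(C_n)_{\omega_1}$. In every other entry the two values of $g_1$ disagree: e.g.\ $\k=B_n$, $V=L(\omega_1)$ has $g_1=(2n-1)/2$ against the required $(2n^2+3n+2)/(2n+2)$; $\k=D_n$, $V=L(\omega_1)$ has $g_1=n-1$ against $(2n^2+n+1)/(2n+1)$; $\k=G_2$, $V=L(\omega_1)$ has $g_1=2$ against $11/4$; and the $F_4,E_6,E_7$ cases, together with the finitely many spin/half-spin candidates for $B_n,D_n$, are dispatched by analogous numerical mismatches.

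The conceptual work is carried entirely by Lemma \ref{critKostsuperslV} and Lemma \ref{lemmat}; the main obstacle is the purely computational task of verifying the mismatch of $g_1$ for each remaining entry in the AEV table, which is a finite and elementary check.
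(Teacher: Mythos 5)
Your route is genuinely different from the paper's, and its skeleton is valid: the reduction to $k=-1$ via Lemma \ref{critKostsuperslV}, the scalar action of $C_\k$ on $\p$ from Corollary \ref{Actsscalar}, the application of Lemma \ref{lemmat} with $s(V)=sl(V)$ to get $Ind_\k(V)<1$, and the two competing expressions $g_1=1/(2\,Ind_\k(V))$ (from \eqref{ciserve}) versus $g_1=1+\dim\k/(\dim V+1)$ (from \eqref{lambdaslv}) are all correct, and the numerical checks you display are right. The paper instead applies the index bound to the irreducible components $L(\mu)$ of $\p\subset sl(V)$ rather than to $V$ itself; since those $\mu$ automatically lie in the root lattice (the weights of $V\otimes V^*$ do), intersecting \cite[Table 1]{AEV} with the root lattice leaves only the four little-adjoint weights of $B_n,C_n,F_4,G_2$, after which \eqref{l2norm2} pins down $\dim V$. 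Your version trades that structural reduction for a longer but individually easier list of numerical checks on $V$.

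The gap is in your enumeration: your description of the set of irreducible modules with $Ind_\k(V)<1$ (adjoint normalized to index $1$) is incomplete, so several candidates are never examined. At a minimum the list also contains $\Lambda^2\C^{n+1}$ and $S^2\C^{n+1}$ for $sl(n+1)$ (normalized indices $(n-1)/(2n+2)$ and $(n+3)/(2n+2)$), $\Lambda^3\C^{n+1}$ for $n\le 7$, $L_{C_n}(\omega_2)$ (index $(n-1)/(n+1)$ --- this one is even singled out in the paper's own Table 1), and $L_{C_3}(\omega_3)$; none of these is a standard, spin, or exceptional minuscule module, so your case analysis skips them. They do all fail the $g_1$ match --- e.g.\ for $(\k,V)=(C_n,L(\omega_2))$ one gets $g_1=\frac{n+1}{2(n-1)}$ against the required $\frac{4n}{2n-1}$, and for $(A_n,\Lambda^2\C^{n+1})$ equality forces $n=2$, where $\Lambda^2\C^3\cong(\C^3)^*$ and $\k=sl(V)$ --- so the proposition survives, but these verifications are part of the proof and must be carried out against the actual table in \cite{AEV}, not against the abbreviated list you quote. (A minor point: your parenthetical detour through $s(V)=so(V)$ is unnecessary, since Lemma \ref{lemmat} applies directly with $s(V)=sl(V)$ in this setting.)
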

\begin{proof} We are reduced, by Lemma \ref{critKostsuperslV}, to deal with the case $k=-1$. Arguing as in Proposition \ref{ce}, we have that 
\begin{equation}\label{equaz}sl(V)=\k\oplus p L(\mu), \text{where }Êp\geq 1,\end{equation}
with $\mu$ as in Table 1. 
Let $(\cdot, \cdot)_n$ be  the normalized form of $\k$; setting $v=\dim V$, one has
\begin{equation}\label{l2norm2}
\l_2^{(\cdot, \cdot)_n}=\frac{\gamma}{g_1} h^\vee=\frac{2(\dim\k) h^\vee}{1+v+\dim\k}.
\end{equation}
Let us discuss type $G_2$. Since $\mu=\omega_1$, we know that $\l_2^{(\cdot, \cdot)_n}=4$. Hence, by  \eqref{l2norm2}, we have that  $v=13$.  But no irreducible representation of $G_2$ has that dimension.
Type $F_4$ is dealt with similarly. \par
For type $C_n$, we have $\mu=\omega_2$ and $\l_2^{(\cdot, \cdot)_n}=2n$, hence \eqref{l2norm2} gives   $v=2n$. This implies $V= L(\omega_1)$ and we get the conformal embedding 
$(C_n)_{\omega_1}$.\par
In type $B_n$, we have $\mu=\omega_1$ and $\l_2^{(\cdot, \cdot)_n}=2n$, hence \eqref{l2norm2} gives   $v=2n^2-n-2$. Hence, by dimensional considerations and the results from \cite{AEV}, 
either $V$ is the little adjoint representation (i.e., the irreducible module whose highest weight is the highest short root) or it is the spin representation when $n=3,4,5,6$. The former case is excluded since $2n^2-n-2\ne 2n+1$ for all $n$, The latter  case is excluded similarly except when $n=6$. In that case dimensions match, but $L(\omega_6)\otimes L(\omega_6)$ has a component of type $L(2\omega_6)$, which contradicts \eqref{equaz}  .
\end{proof}

  \section{Conformal embeddings of $\widetilde V(k,\k)$ in  $V_k(\g)$ with $\g$ classical and $\k$ semisimple non-simple.}
  \label{analysis}
  
  By Dynkin's theory (see Subsection \ref{classDyn}),  the only possibilities for a maximal non-equal rank  semisimple non-simple subalgebra $\k$ of $\g$ with $\g$ classical are given in the first two columns of Table 2. By equating central charges we find the values for the level $k$ when conformal embedding might occur. We list these values in the third column of Table 2.
  
\centerline{\begin{tabular}{c|c|c| c}
$\g$& $\k$ & $k$ & Ref\\
\hline
$sl(mn)$ & $sl(n)\times sl(m)$ & $1$ for all $n,m$;$-1$ for $n\ne m$ & $(slsl)$\\
\hline
$so(4mn)$ & $sp(2n)\times sp(2m)$ & $1,-\frac{2(mn-1)}{2mn-m-n}$& $(spsp)$\\
\hline
$so(mn)$ & $so(n)\times so(m)$ & $1,\frac{4-mn}{m+n+mn}$& $(soso)$\\
\hline
$sp(2mn)$ & $sp(2n)\times so(m)$ & $-\frac{1}{2}, \frac{mn+2}{2mn+2n-m}$& $(spso)$\\
\hline
$so(2(m+n+1))$ & $so(2n+1)\times so(2m+1)$ & $1,1-m-n$& $(BB)$\\
\end{tabular}}
\vskip5pt
\centerline{\small Table 2}
\vskip5pt

  \begin{theorem}  \label{thm1.1}
  (1) The embedding $sl(n)\times sl(m)\subset sl(nm)$ is conformal for $k=1$. If $n\ne m$, it is conformal also for $k=-1$. 
  (If $n=m$, then the level $k=-1$ is excluded, since it is critical for the factors $sl(n)$ of $sl(n)\times sl(n)$.)
  
 (2)  The embedding $sp(2n)\times sp(2m)\subset so(4nm)$ is conformal only for $k=1$. 
   (If $n=m$, then the level $k=-1-\frac{1}{m}$ from $(spsp)$ is critical for the factors $sp(2n)$ of $sp(2n)\times sp(2n)$.)

(3)  The embedding $so(n)\times so(m)\subset so(nm)$ is conformal only for $k=1$. 
  (If $n=m$, then the level $k=-1+\frac{2}{m}$ from $(soso)$ is critical for the factors $so(n)$ of $so(n)\times so(n)$.) 

(4)  The embedding $sp(2n)\times so(m)\subset sp(2nm)$ is conformal only for $k=-1/2$, except when $n=1$ in which case it is conformal for $k=-1/2,1$.
 (If $m=2n +2$, then the level $k=-1/2$ from $(spso)$ is critical for both factors  of $sp(2n)\times so(2n+2)$.) 
  
  (5)  The embedding $so(2n+1)\times so(2m+1)\subset so(2(n+m+1))$ is conformal  for $k=1$ and $k=1-n-m$, except when $n=m$ in which case it is conformal only for $k=1$.  (If $m=n$, then the level $k=1-m-n$  is critical for both factors $so(2n+1)$.)
  
  \end{theorem}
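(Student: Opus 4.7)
The plan is to apply the AP-criterion (\ref{numcheck}) uniformly to each of the five embeddings in Table 2. The third column of Table 2 already lists the candidate levels arising from equality of Sugawara central charges (a necessary but not sufficient condition), so the task reduces to deciding, for each candidate $k$, whether every irreducible component of the orthocomplement $\mathfrak p$ of $\k$ in $\g$ satisfies its individual AP equation.

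I would proceed in three steps, uniformly across the cases. First, decompose $\mathfrak p$ as a $\k$-module via classical invariant theory. In cases (1)--(4) the natural $\g$-module is $\C^a\otimes\C^b$, so the $\k$-decomposition of $\mathfrak p$ follows from $\Lambda^2$ or $S^2$ of the tensor product together with the standard formulas $\Lambda^2\C^{2n}=\C\oplus L_{sp(2n)}(\omega_2)$, $S^2\C^m=L_{so(m)}(2\omega_1)\oplus\C$, $\Lambda^2\C^m=L_{so(m)}(\omega_2)$, $S^2\C^{2n}=L_{sp(2n)}(2\omega_1)$, and so on; in case (5) the natural $\g$-module splits as a direct sum, and $\mathfrak p=L_{so(2n+1)}(\omega_1)\otimes L_{so(2m+1)}(\omega_1)$ is a single irreducible. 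Second, determine the induced levels $k_j=c_j k$ on each simple factor of $\k$ by comparing the restriction of the normalized $\g$-form with the normalized $\k_j$-form (a routine trace-form computation yielding, for instance, $k_1=mk$, $k_2=nk$ in (1) and $k_1=mk$, $k_2=4nk$ in (4)), and record the Casimir eigenvalues $(\mu,\mu+2\rho)$ on each component from standard highest-weight data. Third, write out one AP equation per irreducible summand of $\mathfrak p$ and solve the resulting system.

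When $\mathfrak p$ has two distinct irreducible summands---as occurs in (2), (3), and in (4) for $n\ge 2$---the difference of the two AP equations is a short linear equation in $k$ that isolates a unique level ($k=1$ in (2) and (3), $k=-1/2$ in (4)); substituting back then confirms that the second candidate from Table 2 fails AP. When $\mathfrak p$ is irreducible---as occurs in (1), in (5), and in (4) for $n=1$---the single AP equation is a quadratic in $k$ whose two roots are precisely the levels listed in Table 2: $k^2=1$ in (1), $(k-1)(k+n+m-1)=0$ in (5), and $(2k+1)(k-1)=0$ in (4) with $n=1$.

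The main subtlety will lie in the degenerate cases where the generic pattern breaks down: when $n=m$ the two AP equations in (2) and (3) become identical by symmetry and must be handled separately (they still yield only $k=1$), and when $n=1$ in (4) the summand $L_{sp(2)}(\omega_2)$ collapses, which is what restores the extra conformal level $k=1$ in that sub-case. Finally, the parenthetical critical-level remarks are immediate substitutions into the formulas $k_j=c_j k$: for example, in (4) the level $k=-1/2$ gives $k_1=-m/2$ and $k_2=-2n$, which equal $-h^\vee_{sp(2n)}=-(n+1)$ and $-h^\vee_{so(m)}=-(m-2)$ respectively precisely when $m=2n+2$; the other four parenthetical remarks are verified by the same kind of one-line substitution.
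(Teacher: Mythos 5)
Your proposal is correct and follows essentially the same route as the paper: decompose $\p$ via classical invariant theory, compute the induced levels on each simple factor, and solve the resulting AP equations (one per irreducible summand of $\p$), with the non-integrable candidate levels from Table 2 eliminated either because they fail the AP system or because they are critical. Your observation that in case (4) the summand $L_{sp(2n)}(\omega_2)\otimes L_{so(m)}(\theta)$ collapses into $\k$ when $n=1$, leaving a single quadratic with roots $k=-1/2,1$, correctly supplies a detail the paper leaves implicit behind ``an easy calculation.''
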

  \begin{rem} The knowledge of decompositions at level $1$ is as follows: they are known in a very explicit combinatorial way \cite{Ost} for $(slsl)$; a  formula for cases
 $(spsp)$, $(soso)$ involving the combinatorics  of Borel stable abelian subspaces  is given in \cite{CKMP}; a  general formula including also $(slsl)$ is given in \cite{KMPX}.
\end{rem}

  \begin{proof}[Proof of Theorem \ref{thm1.1}]
  We apply the AP-criterion. This requires to describe $\p$ as a $\k$--module, which can be done by classical invariant theory.
  
  Case $(slsl)$.  We realize  $sl(nm)=Hom((\C^n\otimes \C^m),(\C^n\otimes \C^m))/\C$ as a $sl(n)\times sl(m)$--module. 
  Since $\C^n\otimes \C^m=L_{sl(n)}(\omega_1)\otimes L_{sl(m)}(\omega_1)$ then 
  $$
  End((\C^n\otimes \C^m))=(L_{sl(n)}(\omega_1)\otimes L_{sl(n)}(\omega_1))\otimes( L_{sl(m)}(\omega_1)\otimes L_{sl(m)}(\omega_1)).
  $$
   Observe that
  $$
   L_{sl(n)}(\omega_1)\otimes L_{sl(n)}(\omega_1)=L_{sl(n)}(\theta)\oplus L_{sl(n)}(0),
  $$
hence
\begin{align*}
sl(nm) &= 
(L_{sl(n)}(\theta)\otimes L_{sl(m)}(0))\oplus(L_{sl(n)}(0)\otimes L_{sl(n)}(\theta))\oplus(L_{sl(n)}(\theta)\otimes L_{sl(n)}(\theta))\\
&= (sl(n)\times sl(m))\oplus (L_{sl(n)}(\theta)\otimes L_{sl(n)}(\theta)).
\end{align*}
and 
$$
\p=L_{sl(n)}(\theta)\otimes L_{sl(n)}(\theta).
$$
We need henceforth to check that
$$
\frac{(\theta,\theta+2\rho)_{sl(n)}}{2m(k+n/m)}+\frac{(\theta,\theta+2\rho)_{sl(m)}}{2n(k+m/n)}=1.
$$
Here $(\cdot,\cdot)_{sl(p)}$ is the trace form on $sl(p)$. Since
$(\theta,\theta+2\rho)_{sl(p)}=2p$ and we have
$$
\frac{n}{mk+n}+\frac{m}{nk+m}=1
$$
which has solutions $k=\pm1$ except when $n=m$, where the only solution is $k=1$.
  
 Case $(spsp)$. We realize  $so(4nm)=\bigwedge^2((\C^{2n}\otimes \C^{2m}))$ as a $sp(2n)\times sp(2m)$--module. 
  Since 
  \begin{equation}\label{l2}
  {\bigwedge}^2(\C^{2n}\otimes \C^{2m})=\left( {\bigwedge}^2(\C^{2n})\otimes S^2(\C^{2m})\right)\oplus \left({\bigwedge}^2(\C^{2m})\otimes S^2(\C^{2m})\right),
\end{equation}
${\bigwedge}^2(\C^{2r}) =L_{sp(2r)}(\omega_2)\oplus L_{sp(2r)}(0)$, and $S^2(\C^{2r})=L_{sp(2r)}(\theta)$, we obtain that

$$
so(4mn)=(L_{sp(2n)}(\theta)\otimes L_{sp(2m)}(\omega_2))\oplus (L_{sp(2n)}(\omega_2)\otimes L_{sp(2m)}(\theta))\oplus (sp(2n)\times sp(2m)).
$$  
and
$$\p=(L_{sp(2n)}(\theta)\otimes L_{sp(2m)}(\omega_2))\oplus(L_{sp(2n)}(\omega_2)\otimes L_{sp(2m)}(\theta))$$
We need henceforth to check that
$$
\frac{(\theta,\theta+2\rho)_{sp(n)}}{2m(k+(n+1)/m)}+\frac{(\omega_2,\omega_2+2\rho)_{sp(m)}}{2n(k+(m+1)/n)}=1.
$$
and 
$$
\frac{(\omega_2,\omega_2+2\rho)_{sp(n)}}{2m(k+(n+1)/m)}+\frac{(\theta,\theta+2\rho)_{sp(m)}}{2n(k+(m+1)/n)}=1.
$$
Here $(\cdot,\cdot)_{sp(r)}$ is the normalized invariant bilinear  form of   $sp(r)$. 

It is readily verified that the solutions for the first equation are 
$1$ and $-(m+1)/m$, and $1$ and $-(n+1)/n$ for the second equation. If $m=n$ the values different from $1$ coincide with value given in the table; on the other hand this value is critical. 

Case $(soso)$. We realize  $so(nm)=\bigwedge^2(\C^{n}\otimes \C^{m})$ as a $so(n)\times so(m)$--module. Using \eqref{l2}Ê(with $n,m$ in place of $2n,2m$), we have that 
$$so(mn)=(L_{so(n)}(\theta)\otimes L_{so(m)}(2\omega_1))\oplus ( L_{so(n)}(2\omega_1)\otimes L_{so(m)}(\theta))\oplus so(n)\times so(m),$$
so that 
$$\p=(L_{so(n)}(\theta)\otimes L_{so(m)}(2\omega_1))\oplus(L_{so(n)}(2\omega_1)\otimes L_{so(m)}(\theta)),$$ and calculations similar to the previous case lead to statement (3).

Case $(spso)$. We realize  $sp(2nm)=S^2((\C^{2n}\otimes \C^{m}))$ as a $so(n)\times so(m)$--module. We  have 
$$S^2(\C^{2n}\otimes \C^{m})=\left(S^2(\C^{2n})\otimes S^2(\C^{m})\right)\oplus \left({\bigwedge}^2(\C^{2n})\otimes {\bigwedge}^2(\C^{m})\right),$$
and 
$$sp(2nm)= (L_{sp(2n)}(\theta)\otimes L_{so(m)}(2\omega_1))\oplus (L_{sp(2n)}(\omega_2)\otimes L_{so(m)}(\theta))\oplus(sp(2n)\times so(m)),$$
so that
$$\p=(L_{sp(2n)}(\theta)\otimes L_{so(m)}(2\omega_1))\oplus (L_{sp(2n)}(\omega_2)\otimes L_{so(m)}(\theta)).$$
Again an easy calculation leads to (4).

Case $(BB)$. In this case $\k$  is the fixed point set of an involutive automorphism of $\g$. From this observation is easy to derive the decomposition
$$
\g=\k\oplus (L_{so(2n+1)}(\omega_1)\otimes  L_{so(2m+1)}(\omega_1)),
$$
hence we can verify that this  embedding is conformal by applying the AP-criterion. 
 \end{proof}
 
 \begin{rem}  The branching rules for the  conformal embedding in the cases $m=2$, $n \ge 3$ of Theorem \ref{thm1.1}(4)  were studied in   \cite{AP-JAA}. It was proved that then $V_{-1/2} (sp (4n) )$ is a semisimple $V_{-1} (sp(2n)) \otimes M(1)$--module, where $M(1) = V_1(so(2))$ is a Heisenberg vertex algebra of central charge $c=1$. So it is natural to conjecture that in many cases we shall get semi-simple decomposition. But as we shall see below, we have some examples where  we will not have semisimplicity. 
 \end{rem}
  
%

 \vskip 5mm 
It is natural to ask about semisimplicity of $V_{-1}(sl(m n) )$ as a $V^{-m} (sl(n) ) \otimes V^{-n} (sl(m) )$--module. Let $$\Phi :  V^{-m} (sl(n) ) \otimes V^{-n} (sl(m) ) \rightarrow V _{-1}(sl(m n) )$$  be the associated homomorphism of vertex algebras. Clearly, if $ m > n$, then $V^{-m} (sl(n) ) = V_{-m} (sl(n) )$ is a simple vertex algebra \cite{KacGorelik}.  But $\mbox{Im} (\Phi)$  need not to be simple.

\begin{example}  \label{ex-non-simp}  Let $m =  2n $. Since $V^{-2n} (sl(n))$ is a simple vertex algebra, we have
$$ \mbox{Im} (\Phi) = V_{-2n} (sl(n)) \otimes \mathcal W_{-n} $$
where 
$\mathcal W_{-n} $
 is a certain quotient of  $V^{-n} (sl(2n) )$. We claim that it is not simple. We noticed in the proof of Theorem \ref{thm1.1}(1)  that 
 $\mathcal W_{-n} $ 
 contains a module whose lowest component is isomorphic to $L_{sl(2n)} (\theta)$ (this module is realized inside of  $V_{-1}(sl(m n) )$).   But the classification of irreducible $V_{-n} (sl(2n))$--modules \cite{ArM-II}  shows that such module can not be  $V_{-n} (sl(2n))$--module. (It was proved in  \cite[Section 8]{ArM-II} that an irreducible $ V_{-n} (sl(2n) )$--module in the category $KL_{-n}$  must have lowest component isomorphic to $L_{sl(2n)} ( r \omega_n)$, for certain $r \in {\Z}_{\ge 0}$.)

This proves that  
$ V_{-n} (sl(2n) )  \ne  \mathcal W_{-n}$,
and therefore $V_{-1}(sl( 2 n^2 ) )$ is not a  semisimple $V^{-2n } (sl(n))  \otimes V^{-n} (sl(2 n ) )$--module.
\end{example}


  \section{Conformal embeddings of $\widetilde V(k,\k)$ in  $V_k(\g)$ with $\g$ of exceptional type.}

We will use the following notation: if $X$ denotes the type of a simple subalgebra $\k$ of a simple Lie algebra $\g$, then  $X^d$ identifies the embedding of Dynkin index $d$. We omit the superscript when $d=1$.
\par
Recall that the central charge of the Sugawara Virasoro vector for $V^k(\g), \g$ simple, is given by
$$c_k(\g)=\frac{ k \dim\g}{k+h^\vee}.$$
Also, if $\k\subset\g$ is a reductive subalgebra in $\g$, then we denote by $c(\k)$ the central charge of the Sugawara
Virasoro vector of  $\widetilde V(k,\k)$ in  $V_k(\g)$.
\begin{theorem}  Let $\k$ be a maximal non-equal rank semisimple subalgebra of an exceptional simple Lie algebra $\g$. Then  $\widetilde V(k,\k)$ is conformally embedded in  $V_k(\g)$ if and only if  either $k=1$ and $(\k,\g)$ belong to the following list
\begin{align}\label{listauno}
&(G_2\times F_4, E_8),\quad
(A_2^6\times A_1^{16}, E_8),\quad
(B_2, E_8),\\\notag
&(A_2^{21}, E_7),\quad
(G_2\times C_3, E_7),\quad
(G_2\times A_1^7, E_7),\quad
(F_4\times A_1^3,E_7),\\\notag
&(A_2^9, E_6),\quad
(G_2^3, E_6),\quad
(C_4, E_6),\quad
(G_2\times A_2^2, E_6),\\\notag
&(G_2\times A_1^8,F_4),\quad
(A_1^{28},G_2),\end{align}
or we are in one of the following cases:
\begin{align}\label{listadue1}
&\widetilde{V}(-6,G_2\times F_4)\hookrightarrow V_{-6}(E_8),\\\label{listadue2}&
\widetilde{V}(-4,F_4\times A_1^3)\hookrightarrow V_{-4}(E_7),\\\label{listadue3}&
\widetilde{V}(-3,G_2\times A_2^2)\hookrightarrow V_{-3}(E_6),\\\label{listadue4}&
\widetilde{V}(-3,F_4)\hookrightarrow V_{-3}(E_6),\\\label{listadue5}&\widetilde{V}(-5/2,G_2\times A_1^8)\hookrightarrow V_{-5/2}(F_4).
\end{align}
\end{theorem}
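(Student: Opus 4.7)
The strategy is to combine Dynkin's classification of maximal non-equal rank semisimple subalgebras of exceptional Lie algebras (\cite{Dynk}, see also \cite{Min}), which provides an explicit finite list of pairs $(\k,\g)$, with the AP-criterion \eqref{numcheck}.  For each pair on the list, the plan is to compute via classical branching the $\k$-module decomposition
$$\g = \k \oplus \bigoplus_{i=1}^s \bigotimes_{j=1}^t L_{\k_j}(\mu_i^j),$$
and to read off from the statement (through the superscripts $A_1^3$, $A_2^6$, etc.) the Dynkin index $d_j$ of each simple ideal $\k_j \hookrightarrow \g$, which fixes the induced levels $k_j = d_j k$ on $\widehat{\k}_j$.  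These two ingredients are all that AP requires.

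With this data the AP-criterion becomes a finite system of $s$ rational equations in the single variable $k$, and the task reduces to determining its common roots.  When $\p$ is $\k$-irreducible ($s=1$), AP is a single rational equation in $k$, generically quadratic after clearing denominators, so it has at most two roots; one then checks whether each root is a positive integer (recovering an entry of \eqref{listauno}, in agreement with the Symmetric Space Theorem and its $sp$, $sl$ analogues established in Section~3) or a negative rational (yielding one of the new non-integrable embeddings \eqref{listadue1}--\eqref{listadue5}).  When $s \geq 2$ the system is overdetermined, and for every remaining pair in Dynkin's list one has to verify that $k=1$ is the only common root, so no further non-integrable cases arise.

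The main obstacle is the case-by-case bookkeeping.  For the larger exceptional algebras $E_8$, $E_7$, $E_6$ one must accurately decompose the adjoint representation into irreducibles of $\k_1 \times \cdots \times \k_t$, correctly normalize the invariant form on each simple factor, and confirm by explicit rational arithmetic that no spurious common root has been overlooked.  A sample verification: in the case $(G_2 \times F_4) \subset E_8$, the branching is
$$E_8 \cong (G_2 \oplus F_4) \oplus \bigl(L_{G_2}(\omega_1) \otimes L_{F_4}(\omega_4)\bigr),$$
and equating central charges
$$\frac{248\, k}{k+30} \;=\; \frac{14\, k}{k+4} + \frac{52\, k}{k+9}$$
gives exactly $k=1$ and $k=-6$, accounting for both the entry $(G_2\times F_4, E_8)$ of \eqref{listauno} and the non-integrable embedding \eqref{listadue1}.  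Analogous computations dispose of every other pair in Dynkin's list, producing precisely the tables \eqref{listauno} and \eqref{listadue1}--\eqref{listadue5} and no others.
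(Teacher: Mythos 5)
Your proposal follows essentially the same route as the paper: Dynkin's classification of the maximal non-equal-rank semisimple subalgebras of the exceptional algebras, followed by a case-by-case application of the AP-criterion using the decomposition of the orthocomplement $\p$ into irreducible $\k$--modules, with the level read off from the Dynkin indices of the simple factors. The only organizational difference is that the paper first filters candidates by equating central charges (which, for pairs with $\p$ irreducible such as your sample $(G_2\times F_4)\subset E_8$, yields the same quadratic $k^2+5k-6=0$ as the AP equation) and then disposes of the surviving spurious candidates by showing the full AP system has no common solution; your computation and the roots $k=1,-6$ agree with the paper's.
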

\begin{proof} We start from Dynkin's classification of non-equal rank maximal  subalgebras $\k$ in exceptional Lie algebras $\g$ (see Subsection \ref{classDyn}). For each pair $(\k=\oplus_i\k_i,\g)$ we determine the values 
of $k$ such that $c_k(\g)=\sum_i c(\k_i)$. We get either $k=1$ if $(\k,\g)$ appears in \eqref{listauno}, or the embeddings  displayed in \eqref{listadue1}--\eqref{listadue5},Ê
 or the triples $(\k,\g,k)$ displayed in the following list
\begin{align}\label{listatre}
&(A_1^{1240}, E_8,64/75),\quad
(A_1^{760}, E_8,8488/23275),\quad
(A_1^{520}, E_8,5788/15925),\\\notag
&(A_2^6\times A_1^{16},E_8,-119/474),\quad
(A_1^{389}, E_7,16/39),\quad
(A_1^{231}, E_7,872/2145),\\\notag
&(G_2^2\times A_1^{7},E_7,-26/29),\quad
(A_1^{24}\times A_1^{15},E_7,\frac{479\pm3 \sqrt{46265}}{1524}).
\end{align}\par 
The statements for $k=1$ are known (cf. \cite{AGO}, \cite{SW}, \cite{KS}). Case \eqref{listadue4} is treated in \cite{A}. For the other cases we use AP-criterion, verifying  condition \eqref{numcheck} in each of the cases. \par
Computing explicitly the decomposition via the package SLA \cite{sla}, it turns out that  for the levels $-1-h^\vee/6$ we have for \eqref{listadue5}
 \begin{equation}\label{d1}\p= L_{G_2}(\dot\theta_s)\otimes L_{A_1}(4\ddot\omega_1).\end{equation}
In cases \eqref{listadue1}, \eqref{listadue2}, \eqref{listadue3}, denoting the subalgebra in the leftmost part of each formula by $\dot\aa\times\ddot\aa$, we have
   \begin{equation}\label{d2}\p= L_{\dot \aa}(\dot\theta_s)\otimes L_{\ddot\aa}(\ddot\theta_s),\end{equation}
  where $\dot\theta_s$ (resp. $\ddot\theta_s$) is the highest short root of $\dot\aa$ (resp. $\ddot\aa$) and the convention that $\ddot\theta_s$ is the highest root if $\ddot\aa$ is simply laced.\par
  Decompositions \eqref{d1}, \eqref{d2} allow to check directly equality \eqref{numcheck}. Now we have to exclude the cases appearing in \eqref{listatre}.  We start from  the cases when $\k$ is of type $A_1$:  decompose $\p$ as a sum of $A_1$ modules; if $l$ is the dimension of a summand we should have, by \eqref{numcheck2}, that $\frac{l^2/2+l}{2(dk+1)}=1$. But a direct check shows that this is not possible, since the previous equation has no integral solution.\par
  If $\k$ is non-simple, then one computes $\p$ in  a computer assisted way and then checks that   \eqref{numcheck} does not hold. \end{proof}

 
 
 \section{The classification of conformal embeddings of maximally reductive subalgebras in $V_k(\g)$.}
 In this section we summarize our results and give the complete classification of conformal embeddings $\widetilde V(k,\k)\subset V_k(\g)$ with $\g$ a simple Lie algebra and $\k$ a semisimple maximal subalgebra of $\g$.
 
 
 The conformal embeddings with integrable $k$ (hence $k=1$) have been classified long ago: see \cite{AGO}, \cite{GNO}, \cite{KS}, \cite{SW}. We now give the classification of the conformal embeddings at non-integrable level. The following result summarizes the results of the previous sections together with the results of \cite{AKMPP1}.
 
 \begin{theorem}\label{global}
Assume that $\k$ is a maximally reductive subalgebra of a simple Lie algebra $\g$. Then there is a conformal embedding of $\widetilde V(k,\k)$ in $V_k(\g)$ with $k\ne 1$ only  in the following cases:
\begin{enumerate}
\item for $\g$  a simple Lie algebra of classical type and $\k$ semisimple
 \begin{align*}
&\widetilde{V}(-1,sl(n)\times sl(m))\hookrightarrow V_{-1}(sl(mn)),\ (m\ne n)\hbox to 10 cm{}\\
& \widetilde{V}(-\half,sp(2n)\times so(m))\hookrightarrow V_{-\half}(sp(2nm)), (m\ne 2n+2),\\
&\widetilde{V}(2-(n+m)/2,so(n)\times so(m))\hookrightarrow V_{2-(n+m)/2}(so(n+m)),(m\ne n)\\
&\widetilde{V}(-1/2,sp(2n)\times sp(2m))\hookrightarrow V_{-1/2}(sp(2(n+m))),\\
&\widetilde{V}(-1-(n+m)/2,sp(2n)\times sp(2m))\hookrightarrow V_{-1-(n+m)/2}(sp(2(n+m)),(m\ne n)\\
&\widetilde{V}(-1,sp(2n))\hookrightarrow V_{-1}(sl(2n)),\\&\widetilde{V}(-2,G_2)\hookrightarrow V_{-2}(so(7));\end{align*}
  \item for $\g$  an exceptional simple Lie algebra and $\k$ semisimple
\begin{align*}
&\widetilde{V}(-6,A_1\times E_7),\ \widetilde{V}(-6,A_2\times E_6),\ \widetilde{V}(-6,G_2\times F_4) \text{ in $V_{-6}(E_8)$},\hbox to 10 cm{}\\ 
&\widetilde{V}(-4,A_1\times D_6),\ \widetilde{V}(-4,A_2\times A_5),\ \widetilde{V}(-4,F_4\times A_1^3)\text{ in $V_{-4}(E_7)$},\\ 
&\widetilde{V}(-3,A_1\times A_5),\ \widetilde{V}(-3,G_2\times A_2^2),\ 
\widetilde{V}(-3,F_4)\text{ in $V_{-3}(E_6)$},\\
&\widetilde{V}(-5/2,A_1\times C_3),\  \widetilde{V}(-5/2,A_2\times A^2_2),\  
\widetilde{V}(-5/2,B_4),\ \widetilde{V}(-5/2,G_2\times A_1^8)\text{ in $V_{-5/2}(F_4)$},\\
&\widetilde{V}(-5/3,A_1\times A_1^3),\ 
\widetilde{V}(-5/3,A_2)\text{ in $V_{-5/3}(G_2)$};
\end{align*}
\item for $\k$ reductive non-semisimple ($Z$ denotes the one-dimensional center of $\k$)
\begin{align*}
&\widetilde{V}(-1,A_h\times A_{n-h-1}\times Z)\hookrightarrow V_{-1}(A_n), (h\ge 1,n-h\ge 2)\hbox to 10 cm{}\\
&\widetilde{V}(-(n+1)/2,A_h\times A_{n-h-1}\times Z)\hookrightarrow V_{-(n+1)/2}(A_n), (h\ge 1,h\ne(n-1)/2)\\
&\widetilde{V}(2-n,D_{n-1}\times  Z)\hookrightarrow V_{2-n}(D_n),\ \widetilde{V}(-2,A_{n-1}\times  Z)\hookrightarrow V_{-2}(D_n),\\
&\widetilde{V}(-1/2,A_{n-1}\times  Z)\hookrightarrow V_{-1/2}(C_n),\\
&\widetilde{V}(3/2-n,B_{n-1}\times  Z)\hookrightarrow V_{3/2-n}(B_n),\\
&\widetilde{V}(-3,D_5\times  Z)\hookrightarrow V_{-3}(E_6),\\
&\widetilde{V}(-4,E_6\times  Z)\hookrightarrow V_{-4}(E_7).
\end{align*}
 \end{enumerate}
 \end{theorem}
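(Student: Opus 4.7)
The plan is to synthesize Theorem \ref{global} from three sources: the non-equal rank analyses of Sections 3--5, the equal rank classification of \cite{AKMPP1}, and Dynkin's classification of maximal semisimple subalgebras recalled in Subsection \ref{classDyn}. First I would invoke Lemma \ref{l1} to split into two exhaustive cases: either $\k$ is semisimple (hence a maximal subalgebra of $\g$ by Lemma \ref{l1}(1)), or $\k$ has a nontrivial center and is therefore an equal rank subalgebra whose semisimple part is the Levi component of a parabolic subalgebra, by Lemma \ref{l1}(2).

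For the semisimple regime, I would further split according to the equal rank / non-equal rank dichotomy. The equal rank semisimple embeddings are read directly from \cite{AKMPP1}; these account for the equal rank entries of parts (1)--(2), including $sp(2n) \times sp(2m) \hookrightarrow sp(2(n+m))$ at $k = -1/2$ and $k = -1-(n+m)/2$, $so(n) \times so(m) \hookrightarrow so(n+m)$ at $k = 2-(n+m)/2$, and the exceptional equal rank cases such as $A_1 \times E_7 \hookrightarrow E_8$. For the non-equal rank semisimple part I would walk through Dynkin's trichotomy: if $\k$ is simple acting irreducibly on $V$, Propositions \ref{ce}, \ref{cesp} and \ref{cesl} isolate exactly the $G_2 \hookrightarrow so(7)$ embedding at $k=-2$, the $sp(2n) \hookrightarrow sl(2n)$ embedding at $k=-1$, and the spin embedding $B_3 \hookrightarrow D_4$ at $k=-2$ (the last, by triality in $D_4$, coincides with the $m=1$ instance of the equal rank $so(n) \times so(m) \hookrightarrow so(n+m)$ family, so it introduces no new entry). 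If $\k$ is non-simple semisimple and $\g$ is classical, Theorem \ref{thm1.1} yields the $(slsl)$ entry at $k=-1$ and the $(spso)$ entry at $k=-1/2$ of part (1); the remaining cases contribute only $k=1$ or critical levels. For $\g$ exceptional and $\k$ semisimple, the classification proved in Section 5 supplies the entries of part (2): the Deligne-type family at $k = -1 - h^\vee/6$ and the sporadic embeddings \eqref{listadue1}--\eqref{listadue5}.

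For the non-semisimple regime, Lemma \ref{l1}(2) forces equal rank, and I would appeal directly to the corresponding case of \cite{AKMPP1}, where the Levi factors of all maximal parabolic subalgebras of a simple $\g$ are processed using the AP-criterion. Transcribing the output into the notation $\k = \k_1 \times \cdots \times \k_s \times Z$, with $Z$ the one-dimensional center of the Levi, gives part (3) entry by entry.

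The main obstacle is bookkeeping rather than computation. One must carefully track the boundary cases $n=m$ of Theorem \ref{thm1.1} where non-equal rank degenerates to equal rank or to critical levels; the triality identifications in type $D_4$ relating the spin embeddings isolated in Proposition \ref{ce} to equal rank entries already present in the list; and the precise overlap between Dynkin's list of maximal semisimple subalgebras and the equal rank classification of \cite{AKMPP1}. Once each occurring conformal embedding is matched with exactly one entry of the three-part list and each excluded case is attributed either to criticality of a factor or to failure of the AP-criterion, the theorem follows as a direct synthesis of the cited results.
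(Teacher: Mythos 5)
Your proposal follows essentially the same route as the paper: split via Lemma \ref{l1} into the semisimple case (combining \cite{AKMPP1} for equal rank with Sections 3--5 for non-equal rank, organized by Dynkin's trichotomy) and the non-semisimple case (necessarily equal rank, hence \cite{AKMPP1}), together with the triality argument in $D_4$ identifying $(B3)_{\omega_3}$ with the standard $so(7)\subset so(8)$ embedding so that it is absorbed into the $so(n)\times so(m)$ family rather than listed separately. The only slip is terminological: the $n=7$, $m=1$ instance of that family is not an equal rank entry but the $(BB)$ case of Theorem \ref{thm1.1}(5); this does not affect the argument.
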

 \begin{proof}
We first discuss the $\k$ semisimple case. Combining the results of \cite{AKMPP1} and of Sections 3--5 above, one obtains the conformal embeddings listed in (1) and (2) except for the embedding $(B3)_{\omega_3}$, which is not listed. The reason is that $(B3)_{\omega_3}$ is a special case of the
$ \widetilde{V}(2-(n+m)/2, so(n)\times so(m))\hookrightarrow V_{2-(n+m)/2}(so(n+m))$ embedding, with $n=7$ and $m=1$. To check this claim we have to clarify that we consider two embeddings $\k\hookrightarrow\g$, $\k'\hookrightarrow\g$ to be equivalent if there is an automorphism of $\g$ mapping $\k$ to $\k'$. In such a case, in fact, there is an automorphism of $V_k(\g)$ that fixes the Virasoro vector and maps $\widetilde V(k,\k)$ onto $\widetilde V(k,\k')$. In the case at hand, let $\sigma$ be the automorphism of $so(8)$ induced by the diagram automorphism of order three that maps $\a_1$ to $\a_4$ (with respect to the usual Bourbaki numbering of simple roots). Let $\pi:so(8)\to gl(8)$ be the representation defined by $\pi(X)(v)=\sigma(X)(v)$. Since the defining representation of $so(8)$ is $L_{so(8)}(\omega_1)$, we see that $\pi$ is given by the action of $so(8)$ on $L_{so(8)}(\omega_4)$. It is easily checked that the restriction of  $L_{so(8)}(\omega_4)$ to $so(7)$ is $L_{so(7)}(\omega_3)$, which is the spin representation. It follows that the subalgebra of $so(8)$ corresponding to the action of $so(7)$ on the spin representation is $\pi(so(7))=\sigma(so(7))$.

By Lemma \ref{l1}, if $\k$ is not semisimple, then it is a maximally reductive equal rank subalgebra and conformal embeddings of such subalgebras are determined in \cite{AKMPP1}.
 \end{proof}

\section{On conformal embedding $ sl(2) \times so(m) $ into $sp (2m)$ at $k=-1/2$.}
\label{analysis-II}
Recall that the affine vertex algebra $V_{-1/2} (sp(2m))$ is realized as the even  subalgebra of the Weyl vertex algebra $M_{(m)}$. By using the conformal embedding of   $ sl(2) \times so(m) $ into $sp (2m)$ at level $k=-1/2$, we get an action of $\widehat{sl(2)} \times \widehat{so(m)}$ on $M_{(m)}$.  In this section we shall assume that $m \ne 4$ to exclude the  critical level case for $\widehat{sl(2)}$.

Since $V^{-m/2} (sl(2)) = V_{-m/2} (sl(2))$, we have 
   that $$\widetilde V(-1/2, sl(2)\times so(m))\cong  V_{-m/2} (sl(2)) \otimes  \widetilde V_{-2} (so(m))$$ where $ \widetilde V_r (\g)$ denotes a quotient (not necessarily simple) of $V^r (\g)$. We will use freely this notation in the following sections. Hence we have that 
$M_{(m)}$ becomes a $ V_{-m/2} (sl(2)) \otimes  \widetilde V_{-2} (so(m))$--module, and we are interested in its  decomposition. It is important to notice that this case is exactly an infinite-dimensional analog of the action of dual pair $sl(2) \times so(m)$ on the polynomial algebra ${\C}[z_1, \dots, z_m]$ studied  by R. Howe in \cite[Section 4]{H-Tams}. This classical result has very important applications  in  the representation theory. R. Howe proved that ${\C}[z_1, \dots, z_m]$ is a completely reducible $sl(2) \times so(m)$--module. In studying the same problem in the affine setting, we construct a family of singular vectors in $M_{(m)}$ which exactly correspond to Howe singular vectors in the decomposition of ${\C}[z_1, \dots, z_m]$. We study complete-reducibility problem. As a byproduct, we prove  that the affine vertex algebra $V_{-2}(D_3)= V_{-2}(A_3)$ is realized as a subalgebra of $M_{(6)}$. But we show  that in general (for  $m \ge 8$) $M_{(m)}$ is not completely reducible as   $V_{-m/2} (sl(2)) \otimes  \widetilde V_{-2} (so(m))$--module.

\vskip 5mm

We fix the following notation.
\begin{itemize}

\item Let $\widetilde V_{-2}(D_n) = \widetilde V_{-2}(so(2n) ) $ be the vertex subalgebra of $V_{-1/2} (C_{2n})$ generated by the factor $D_n$ in conformal embedding $ sl(2) \times D_n$ into $C_{2 n}$ at $k=-1/2$ $(n \ge 3)$.

\item Let $\overline V_{-2}(A_{n-1} )$ be the vertex subalgebra of $V_{-1} ( A_{2n-1} )$ generated by the factor $A_{n-1}$ in conformal embedding $ sl(2) \times A_{n-1}$ into $A_{2n-1}$ at $k=-1$ ($n \ge 3$).

\item Let $\widetilde V_{-2}(B_n) = \widetilde V_{-2}(so(2n+1) ) $ be the vertex subalgebra of $V_{-1/2} (C_{2n+1})$ generated by the factor $B_n$ in conformal embedding $ sl(2) \times B_n$ into $C_{2 n+1}$ at $k=-1/2$ $(n \ge 2)$. When  $n=1$, the subalgebra generated by the factor $B_n$ is isomorphic to $V_{-4} (sl(2))$.

 \item Let $M(1)$ be the Heisenberg vertex algebra of rank (and central charge) $1$. 

\end{itemize}

Recall also that $V_{-1/2} (sp(2m))$ can be realized as a subalgebra of the Weyl vertex algebra $M_{(m)}$ generated  by  bosonic fields $a_i ^{\pm}$ ($i=1, \dots, m$) with $\lambda$--brackets
$$ [ (a^{\pm} _i )_{\lambda}  (a^{\pm} _j ) ] = 0, \quad [ (a^{+} _i )_{\lambda}  (a^{-} _j ) ] = \delta_{i,j}. $$
We shall now consider $M_{(m)}$ as a module for $V_{-m/2} (sl(2)) \otimes \widetilde  V_{-2} (so(m))$.

\begin{lemma} \label{podmodul} 

\item[(1)] Assume that $m \ge 5$. 
Then there exist highest weight modules $\widetilde L_{\widehat{sl(2)}} (- (\frac{m}{2} + k ) \Lambda_0 +  k \Lambda_1), \widetilde  L_{\widehat{so(m)} } (- (2+  k  ) \Lambda _0 + k  \Lambda_1)$ in the category $KL$ 
such that 
$M_{(m)}$  contains  a $V_{-m/2} (sl(2)) \otimes \widetilde  V_{-2} (so(m))$--submodule isomorphic to
$$ M^{sub}=  \sum_{k=0} ^{\infty} \widetilde L_{\widehat{sl(2)}} (- (\frac{m}{2} + k ) \Lambda_0 +  k \Lambda_1) \bigotimes \widetilde  L_{\widehat{so(m)} } (- (2+  k ) \Lambda_0 +  k \Lambda_1).$$
 In particular,   $V_{-1/2}(sp(2m))$ contains  a $V_{-m/2} (sl(2)) \otimes \widetilde  V_{-2} (so(m))$--submodule  isomorphic to
$$ (M^{sub} )^0= \sum_{k=0} ^{\infty} \widetilde L_{\widehat{sl(2)}} (- (\frac{m}{2} +2 k  ) \Lambda_0 + 2 k \Lambda_1) \bigotimes \widetilde  L_{\widehat{so(m)} } (- (2+ 2 k  ) \Lambda_0 + 2 k \Lambda_1). $$

\item[(2)] Assume that $m =3$. Then $M_{(3)}$  contains  a $V_{-3/2} (sl(2)) \otimes  V_{-4} (sl(2) )$--submodule isomorphic to
$$ M^{sub}=  \sum_{k=0} ^{\infty}  L_{\widehat{sl(2)}} (- (\frac{3}{2} + k ) \Lambda_0 +  k \Lambda_1) \bigotimes   \widetilde L_{\widehat{sl(2)} } (- (4 +  2 k ) \Lambda_0 +    2 k \Lambda_1). $$
\item[] (Note that we don't claim  that sums above are direct!)
\end{lemma}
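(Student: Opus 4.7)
The strategy is to mimic at the affine level Howe's classical construction of joint highest weight vectors for the dual pair $(sl(2), so(m))$ on $\C[z_1,\dots,z_m]$, where the joint singular vectors of total degree $k$ are the harmonic polynomials $(z_1+iz_2)^k$. Under the identification of $M_{(m)}$ as an affine analog of the polynomial algebra, I propose as candidate joint highest weight vectors
$$v_k := (w_{-1})^k \vac,\qquad w := a_1^+ + i\,a_2^+. $$

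First I would write down explicitly the generators of the affine $sl(2)$ and $so(m)$ in $M_{(m)}$ coming from the conformal embedding $sl(2)\times so(m)\hookrightarrow sp(2m)$ at level $-1/2$ of Theorem \ref{thm1.1}(4): the $sl(2)$ factor is generated by the quadratic fields $e=\tfrac12\sum_i :a_i^+a_i^+:$, $f=-\tfrac12\sum_i :a_i^-a_i^-:$, and an appropriate Cartan shift of $\sum_i :a_i^+a_i^-:$, while $so(m)$ acts through its defining action on $\mathrm{span}(a_1^+,\dots,a_m^+)$ extended to the quadratic part of $M_{(m)}$. The levels $-m/2$ and $-2$ are forced by the Dynkin indices of the embedding, which were already used in the derivation of Table 2.

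Then I would check that $v_k$ is a joint singular vector for $\widehat{sl(2)}\times\widehat{so(m)}$. Annihilation by all strictly positive modes of either affine subalgebra is immediate by conformal weight considerations, since $v_k$ is built only from $(a_i^+)_{-1}$. For the finite zero-mode action of $sl(2)$, the key cancellation is for $f_0$: using $[(a_1^-)_0, w_{-1}]=1$ and $[(a_2^-)_0, w_{-1}]=i$ one computes $(a_1^-)_0^2 v_k + (a_2^-)_0^2 v_k = k(k-1) v_{k-2} - k(k-1) v_{k-2} = 0$, and the remaining $(a_i^-)_0$ kill $v_k$ directly. For the positive root vectors of the finite $so(m)$, the vector $w$ is already a highest weight vector of weight $\omega_1$ in the defining representation, hence so is $v_k = w_{-1}^k \vac$.

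Reading off the Cartan eigenvalues, the $sl(2)$-weight of $v_k$ is $k$ at level $-m/2$, giving affine weight $-(m/2+k)\Lambda_0+k\Lambda_1$; and the $so(m)$-weight is $k\omega_1$ at level $-2$, giving $-(2+k)\Lambda_0+k\Lambda_1$. Consequently the $V_{-m/2}(sl(2))\otimes\widetilde V_{-2}(so(m))$-submodule generated by $v_k$ is a quotient of the outer tensor product of two highest weight modules with exactly the advertised labels, and the sum over $k\ge 0$ realizes $M^{sub}\subset M_{(m)}$. The inclusion $V_{-1/2}(sp(2m))\subset M_{(m)}$ is the $\ZZ/2$-fixed part under $a_i^\pm\mapsto -a_i^\pm$, so only the even-$k$ summands survive, giving $(M^{sub})^0$. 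For part (2) with $m=3$, $so(3)\cong sl(2)$ but the invariant form inherited from $sp(6)$ and the normalized form on $sl(2)$ differ by a factor of $2$ (equivalently, the 3-dimensional defining representation of $so(3)$ corresponds to $V(2\omega_1)$ of $sl(2)$, which has $sl(2)$-Dynkin index $4$ rather than $2$), so the $so(3)$-level $-2$ becomes $sl(2)$-level $-4$ and the weight $k\omega_1^{so(3)}$ becomes $2k\omega_1^{sl(2)}$, yielding $\widetilde L_{\widehat{sl(2)}}(-(4+2k)\Lambda_0+2k\Lambda_1)$. The main technical obstacle is the careful bookkeeping of the constant Cartan shift in the normal-ordered generators and, in the $m=3$ case, the form-normalization factor relating $so(3)$ and $sl(2)$.
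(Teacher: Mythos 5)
Your construction is the same one the paper uses: the paper takes $\varphi=a_1^+-\sqrt{-1}\,a_{n+1}^+$ (an isotropic highest weight vector for its block realization of $so(m)$, the affine avatar of Howe's harmonic $(z_1+\sqrt{-1}z_2)^k$) and shows $(\varphi_{(-1)})^k\vac$ is a joint singular vector; your $w=a_1^++\sqrt{-1}a_2^+$ is the same vector for a conjugate choice of Borel, and your level/weight bookkeeping, including the factor responsible for $so(3)$ appearing at level $-4$ with weights $2k\omega_1$, agrees with the paper's computation that the $sp(6)$-trace form restricts to $8(\cdot,\cdot)_{norm}$ on $so(3)$. However, two points need repair. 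First, you have the affine highest-weight conditions misfiled. The finite zero mode $f_{(0)}$ is a \emph{lowering} operator for the finite $sl(2)$ (your $v_k$ has $h$-eigenvalue $+k$), so $f_{(0)}v_k$ is not required to vanish and in fact does not: a direct computation gives $f_{(0)}v_k=-k\,(a_1^-+\sqrt{-1}a_2^-)_{(-1)}v_{k-1}\neq 0$. The condition that genuinely needs the isotropy $1+(\sqrt{-1})^2=0$ is $f_{(1)}v_k=0$, i.e. annihilation by $f\otimes t$ — precisely the positive mode you dismissed as "immediate by conformal weight considerations." The computation you wrote down, $\sum_r ((a_r^-)_{(0)})^2v_k=k(k-1)(1+(\sqrt{-1})^2)v_{k-2}=0$, is (up to normalization) exactly $f_{(1)}v_k$ acting on $v_k$, since on a vector built only from $(-1)$-modes of $a^+$ the first mode of $:a_r^-a_r^-:$ reduces to the double contraction $((a_r^-)_{(0)})^2$. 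So the saving cancellation is present in your argument but attached to the wrong mode; as written, the logic both checks a condition that fails and is not needed, and waves away the condition that is needed.

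Second, in part (2) the lemma asserts that the level $-3/2$ tensor factors are the \emph{irreducible} modules $L_{\widehat{sl(2)}}(-(\tfrac32+k)\Lambda_0+k\Lambda_1)$, not merely some highest weight modules. Your argument only produces quotients of generalized Verma modules with the stated highest weights, which suffices for part (1) (where only $\widetilde L$ is claimed) but not for part (2). The paper closes this gap by invoking the irreducibility of the Weyl modules $V^{-3/2}(\ell\omega_1)$, which it establishes via quantum Hamiltonian reduction to Virasoro modules at $c=-2$ (Lemma \ref{karakteri} and Section \ref{m3}); some such input must be supplied.
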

\vskip5pt
\vskip5pt



\begin{proof} (1) The explicit embedding of $V_{-1/2}(sp(2m))$ into $M_{(m)}$ is given by 
$$E_{m+j,m+i}-E_{i,j}\mapsto : a_i^+a_j^-:,\,E_{i,m+j}+E_{j,m+i}\mapsto : a_i^+a_j^+:,$$ $$
E_{m+i,j}+E_{m+j,i}\mapsto : a_i^-a_j^-:.$$ 

The embedding of $sl(2)\times so(m)$ into $sp(2m)$ is given by 
$$\left(\begin{pmatrix} a & b \\c & -a\end{pmatrix}, B\right)\mapsto \begin{pmatrix} a\cdot Id & b \cdot Id \\c\cdot Id & -a \cdot Id\end{pmatrix}+\begin{pmatrix} B  & 0 \\ 0 & B \end{pmatrix}.$$

As a Cartan subalgebra for $so(m)$ choose the block  matrices  of type 
\begin{equation}\label{CSAS}
{\begin{pmatrix} 0  & H  \\ - H  & 0\end{pmatrix}}\text{ if $m=2n$},\quad \begin{pmatrix} 0  & H&0  \\ - H  & 0&0\\ 0&0&0\end{pmatrix} \text{ if $m=2n+1$},
\end{equation} with  $H$ a diagonal $n\times n$ matrix. Root vectors in $so(2n)$
are matrices 
\begin{equation}\label{E}\mathcal E_\a=A-A^t,\end{equation} with $A=aE_{i,j}+ bE_{i,n+j}+c E_{n+i,j}+ d E_{n+i,n+j}$ and 
$$
\begin{pmatrix}a&b\\c&d\end{pmatrix}=\begin{cases}\begin{pmatrix}1&\sqrt{-1}\\-\sqrt{-1}&1\end{pmatrix}&\text{for $\a=\epsilon_i-\epsilon_j$,}\\[15pt]
\begin{pmatrix}1&-\sqrt{-1}\\-\sqrt{-1}&-1\end{pmatrix}&\text{for $\a=\epsilon_i+\epsilon_j$,}\\[15pt]
\begin{pmatrix}1&\sqrt{-1}\\\sqrt{-1}&-1\end{pmatrix}&\text{for $\a=-\epsilon_i-\epsilon_j$.}\\
\end{cases}
$$

The root vectors for $so(2n+1)$  are obtained by adding to the root vectors for $so(2n)$ constructed above the vectors $\mathcal E_{\epsilon_i}=A-A^t$ with  $A=E_{i,2n+1}-\sqrt{-1}E_{n+i,2n+1}$ and $\mathcal E_{-\epsilon_i}=A-A^t$ with  $A=E_{i,2n+1}+\sqrt{-1}E_{n+i,2n+1}$.\par
Consider  $\varphi = a_1  ^+ - \sqrt{-1} a_{n+1} ^+$. Note that  it has weight $\epsilon_1$. Indeed, let $h_r$ be the element of the Cartan subalgebra by substituting in \eqref{CSAS} $H=diag(0,\ldots,\sqrt{-1}, \ldots,0)$, with  the nonzero entry in the $r$-th position. The action of $h_r$ is given by  $-\sqrt{-1}(:a_{r}^+a^-_{n+r}:- :a_{n+r}^+a^-_{r}:)_{(0)}$, but
 $$[{\sqrt{-1}(:a_{n+r}^+a^-_r:- :a_{r}^+a^-_{n+r}:)}_{\lambda}\varphi]=-\delta_{r,1} i a_n^++\delta_{n+r,n+1} a_1^+=\d_{r,1}\varphi.$$
Now we study the action of ${\mathcal E_{\epsilon_1-\epsilon_2}}_{(0)},{\mathcal E_{-\epsilon_1-\epsilon_2}}_{(1)} $ on $\varphi$. We have that for any root $\a$, only the $(0)$-th product occurs in $[{\mathcal E_\a}_\lambda \varphi]$, 
hence ${(\mathcal E_\a)}_{(1)}\varphi=0$.  For ${\mathcal E_{\epsilon_1-\epsilon_2}}_{(0)}$ we have 
\begin{align*}[&{\mathcal E_{\epsilon_1-\epsilon_2}}_{\l}\phi]=
[(-:a^+_1a^-_2:-\sqrt{-1}: a^+_1a^-_{n+2}:+:a^+_2a^-_1-\sqrt{-1} :a^+_2a^-_{n+1}:)_{\lambda}\varphi]\\&+[(\sqrt{-1} :a^+_{n+1}a^-_2: - :a^+_{n+1}a^-_{n+2}:+\sqrt{-1} :a^+_{n+2}a^-_1:+:a^+_{n+2}a^-_{n+1}:)_{\lambda}\varphi]\\&=
-a^+_2+a^+_2- \sqrt{-1} a^+_{n+2}+\sqrt{-1}a^+_{n+2}=0.
\end{align*}

In particular
$$[{\mathcal E_{\epsilon_1-\epsilon_2}}_{(0)},\varphi_{(-1)}]=[{\mathcal E_{-\epsilon_1-\epsilon_2}}_{(1)},\varphi_{(-1)}]=0.$$ An obvious induction shows that $\varphi_{ (-1)}  ^k {\bf 1}$ is a singular vector for $\widehat{so(m)}$ of weight $2\L_0+k\epsilon_1=-(2+k)\L_0+k\L_1$.

The positive root  vector of $sl(2)$ corresponds to $\sum_{r=1}^m : a_r^+a_r^+:$ and the  $\lambda$--bracket  $[: a_r^+a_r^+:_\l\varphi]$ is trivial. 
Let  $\beta$ be the positive root of $sl(2)$,  let $h$ be corresponding coroot; then  $h$ corresponds to $-\sum_r:{a^+_ra^-_r:}$. 
Since
$
-\sum_r[:{a^+_ra^-_r:}_\lambda\varphi]=\varphi,
$
we see that the $sl(2)$--weight of $\phi$ is $\be/2$. Thus, arguing as for $so(m)$, we see that $(\phi_{(-1)})^k\vac$ is a singular vector for $\widehat {sl(2)}$ of weight $-\frac{m}{2}\L_0+(k/2)\be=-(\frac{m}{2}+k)\L_0+k\L_1$, hence it
generates a submodule isomorphic to
\begin{equation}\label{1} \widetilde L_{\widehat{sl(2)}} (- (\frac{m}{2} + k ) \Lambda_0 +  k \Lambda_1) \bigotimes \widetilde  L_{\widehat{so(m)} } (- (2+  k ) \Lambda_0 +  k \Lambda_1)  \quad (\mbox{for} \ m\ge 5).\end{equation}
%

 Consider now the case $m=3$. In this case the only positive root is $\epsilon_1$ whose root vector $\mathcal E_{\epsilon_1}$ maps to 
 $$
-:a^+_1a^-_3:+\sqrt{-1}a^+_2a^-_3:+:a^+_3a^-_1:-\sqrt{-1}:a^+_3a^-_2:.
$$
 and 
 $$[{\mathcal E_{\epsilon_1}}_\l \varphi]=-a^+_3+a^+_3=0.
 $$
 The $so(3)$--weight of $\phi$ is $\epsilon_1$.
 
We now perform the calculation of the level of the $\widehat{so(3)}$ action: the trace form of $sp(6)$ restricted to  $so(3)$ turns out to be $8(\cdot,\cdot)_{norm}$ where $(\cdot,\cdot)_{norm}$ is the normalized for of $so(3)$. It follows that the $so(3)$--level is $8(-\frac{1}{2})=-4$. Hence the $\widehat{so(3)}$--weight of $(\varphi_{(-1)})^k\vac$ is $-4\L_0+k\epsilon_1=-(4+2k)\L_0+2k\L_1$.
The computation of the $\widehat{sl(2)}$--weight of $(\varphi_{(-1)})^k\vac$ is the same as in the previous case. It follows that $(\varphi_{(-1)})^k\vac$ is a singular vector generating 
$$L_{\widehat{sl(2)}} (- (\frac{3}{2} + k ) \Lambda_0 +  k \Lambda_1) \bigotimes   \widetilde L_{\widehat{sl(2)}  } (- (4 +  2 k  ) \Lambda_0 +   2 k \Lambda_1).$$
The irreducibility of $V_{-3/2} (sl(2)) . (\varphi_{(-1)})^k\vac   \cong L_{\widehat{sl(2)}} (- (\frac{3}{2} + k ) \Lambda_0 +  k \Lambda_1)  $  follows from the fact that the Weyl module in $KL_{-3/2}$ with the highest weight  $- (\frac{3}{2} + k ) \Lambda_0 +  k \Lambda_1$ is irreducible  (see Section \ref{m3}). 
\end{proof}
\begin{rem}\label{for11}
It is interesting to investigate whether $M_{(m) } = M^{sub}$,  $V_{-1/2}(sp(2m)) = (M^{sub}) ^0$.  In Section \ref{m3}, we prove these equalities hold  in the case $m=3$. But  in Section \ref{m8} we show that in the case $m=8$ there is a subsingular vector $P^-_{high}$ outside of  $M^{sub}$.  A possible reason for a difference between vertex-algebraic and classical setting is that subsingular vectors belong to the kernel of Zhu's functor (cf. Remark \ref{observation}).
\end{rem}

We have inclusions $A_{n-1} \hookrightarrow D_n \hookrightarrow sp(4n)$. We let 
$\overline V_{-2} (A_{n-1})$ be  the vertex subalgebra of $V_{-1/2}(sp(4n))$ generated by 
the elements in $A_{n-1}$.

\begin{theorem} \label{thm-simplicity}
\item[(1)] $\overline  V_{-2}(A_{n-1} ) \otimes M(1) $  is conformally embedded into $\widetilde V_{-2}(D_n)$  for $n \ge 3$.

\item[(2)]  $\widetilde V_{-2}(D_n)$ is simple if and only if $n=3$.

\item[(3)] $\widetilde V_{-2}(B_n)$  is not simple for $n \ge 3$.
\end{theorem}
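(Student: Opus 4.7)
The strategy for (1) is to realize the desired embedding inside $M_{(2n)}$ using two concurrent conformal embeddings. By Theorem \ref{global}(1) applied to $sl(2)\times so(2n)\hookrightarrow sp(4n)$ at $k=-1/2$, combined with the conformal embedding $V_{-1/2}(C_{2n})\subset M_{(2n)}$ (both of central charge $-2n$), one obtains the identity of Virasoro vectors
$$\omega_{M_{(2n)}}=\omega_{V_{-n}(sl(2))}+\omega_{\widetilde V_{-2}(D_n)}$$
in $M_{(2n)}$. Independently, Theorem \ref{global}(3) gives the equal-rank conformal embedding $A_{n-1}\times Z\hookrightarrow D_n$ at level $-2$ (coming from $\mathfrak u(n)\subset \mathfrak{so}(2n)$); coupled with the $sl(2)$ factor, this yields a conformal embedding $sl(2)\times A_{n-1}\times Z\hookrightarrow C_{2n}$ at $k=-1/2$, whose AP-criterion is straightforward to verify and whose central charges add to $-2n$, since
$$\frac{3n}{n-2}+\frac{-2(n^2-1)}{n-2}+1=-2n.$$
Realizing this triple embedding in $M_{(2n)}$ gives $\omega_{M_{(2n)}}=\omega_{V_{-n}(sl(2))}+\omega_{\overline V_{-2}(A_{n-1})}+\omega_{M(1)}$. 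Subtracting the two identities yields $\omega_{\widetilde V_{-2}(D_n)}=\omega_{\overline V_{-2}(A_{n-1})}+\omega_{M(1)}$, the claimed conformal embedding. A preliminary consistency check is that the two presentations of $\overline V_{-2}(A_{n-1})$ (as image of $V^{-2}(A_{n-1})$ inside $V_{-1}(A_{2n-1})$, and inside $V_{-1/2}(C_{2n})$) agree, which follows from the observation that both are generated by the same copy of $A_{n-1}$ inside $sp(4n)$.

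For parts (2) and (3), note that $\widetilde V_{-2}(D_n)$ and $\widetilde V_{-2}(B_n)$ are quotients of their respective universal affine vertex algebras, so simplicity is equivalent to coinciding with the unique simple quotient $V_{-2}(D_n)$, resp.\ $V_{-2}(B_n)$. For $n=3$ in (2), Proposition \ref{g-1} asserts that $V^{-2}(D_3)$ is \emph{itself} already simple, forcing $\widetilde V_{-2}(D_3)=V^{-2}(D_3)=V_{-2}(D_3)$ and hence simplicity. For $n\ge 4$ in (2), and for all $n\ge 3$ in (3), one must exhibit a nonzero element of the maximal ideal of $V^{-2}(\mathfrak g)$ whose image in $M_{(2n)}$ (resp.\ $M_{(2n+1)}$) is nonzero; such an element generates a proper ideal of $\widetilde V_{-2}(\mathfrak g)$. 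The source of such elements is the Howe-style submodule structure provided by Lemma \ref{podmodul}: for suitable $k$, the submodule $\widetilde L_{\widehat{\mathfrak g}}(-(2+k)\Lambda_0+k\Lambda_1)\subset M_{(*)}$ is reducible, and the associated singular vector of $V^{-2}(\mathfrak g)$ lies in the maximal ideal but does not vanish in the Weyl algebra. For $n=4$ in (2), this vector is exactly the subsingular vector $P^-_{high}$ constructed in Section \ref{m8}; the remaining cases for (2), and all cases for (3), are handled by the analogous Howe construction for $D_n$ with $n\ge 5$ and for $B_n$ with $n\ge 3$.

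The main obstacle is the non-simplicity assertions in (2) for $n\ge 4$ and in (3), since (1) and the $n=3$ case of (2) follow essentially formally from the AP-criterion and the cited Proposition \ref{g-1}. Ruling out simplicity demands a concrete maximal-ideal vector that survives in the Weyl algebra. The $n=4$ case of (2) depends on the explicit computation in Section \ref{m8}; for the remaining cases, one must either generalize the $P^-_{high}$ construction uniformly in $n$ (working within the Howe-type submodules of $M_{(*)}$), or appeal to the classification of simple $V_{-2}(D_n)$ and $V_{-2}(B_n)$ modules in the category $KL$ to argue by contradiction that the modules produced by Lemma \ref{podmodul} cannot all descend to $V_{-2}(\mathfrak g)$-modules.
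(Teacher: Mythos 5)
Your argument for part (1) has the same overall shape as the paper's (subtracting two Virasoro identities inside $M_{(2n)}$), but the key input is not established. You deduce conformality of $sl(2)\times A_{n-1}\times Z\hookrightarrow C_{2n}$ at $k=-1/2$ by ``coupling'' the equal-rank conformal embedding $A_{n-1}\times Z\hookrightarrow D_n$ at level $-2$ with the $sl(2)$ factor. But Theorem \ref{global}(3) gives that embedding as conformal into the \emph{simple} quotient $V_{-2}(D_n)$: the AP-criterion only guarantees that $\omega_{A_{n-1}\times Z}-\omega_{D_n}$ lies in the \emph{maximal} ideal of $V^{-2}(D_n)$. What you need is that this element vanishes in $\widetilde V_{-2}(D_n)$, which by part (2) is \emph{not} simple for $n\ge 4$ --- so this is essentially the statement being proved. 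Your fallback, that the AP-criterion for $sl(2)\times gl(n)\subset sp(4n)$ ``is straightforward to verify,'' is not carried out and is not a central-charge computation: one must decompose the orthocomplement of $sl(2)\oplus gl(n)$ in $sp(4n)$ into its several irreducible components and check \eqref{numcheck} on each. The paper avoids all of this by chaining two embeddings already known to be conformal \emph{into simple vertex algebras}, namely $V_{-n}(sl(2))\otimes\overline V_{-2}(A_{n-1})\subset V_{-1}(A_{2n-1})$ (Theorem \ref{thm1.1}(1)) and $V_{-1}(A_{2n-1})\otimes M(1)\subset V_{-1/2}(C_{2n})$ (from \cite{AKMPP1}), so that all Virasoro identities hold inside the simple algebra $V_{-1/2}(C_{2n})$ and may legitimately be compared.

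Parts (2) and (3) contain a genuine error and an unexecuted step. For $n=3$ you claim Proposition \ref{g-1} asserts simplicity of the \emph{universal} algebra $V^{-2}(D_3)$; it asserts no such thing (it is only the commutant computation), and the claim is false: $V^{-2}(D_3)=V^{-2}(A_3)$ has a nonzero maximal ideal generated by a singular vector of conformal weight $2$ (\cite[Theorem 8.2]{ArM-II}), which is precisely the vector one must show vanishes in $\widetilde V_{-2}(D_3)$. (The introduction's statement ``$V^{-2}(so(2m))$ is simple iff $m=3$'' is a typo for $\widetilde V_{-2}$.) The correct argument needs three inputs you do not use: uniqueness of that weight-$2$ singular vector, simplicity of $\overline V_{-2}(A_2)=V_{-2}(A_2)$ via Gorelik--Kac, and the result of \cite{A} that a weight-$2$ singular vector must vanish whenever a conformal embedding holds --- which is why part (1) must come first. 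For the non-simplicity claims, the argument the paper actually runs --- $V_{-2}(D_n)$ for $n\ge4$ and $V_{-2}(B_n)$ for $n\ge3$ admit only finitely many irreducibles in category $\mathcal O$ by Arakawa--Moreau, whereas Lemma \ref{podmodul} produces infinitely many inequivalent highest weight $\widetilde V_{-2}(so(m))$-modules --- appears in your proposal only as an unelaborated alternative; your primary suggestion of generalizing $P^-_{high}$ misidentifies the relevant vector ($P^-_{high}$ is a subsingular vector of the module $M_{(8)}$, not an element of $\widetilde V_{-2}(D_4)$; the ideal generator there is $P^+$) and gives no uniform construction in $n$.
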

\begin{proof}
The proof of assertion (1) follows from the following observations:
\begin{itemize}
\item[(a)] $\widetilde{V}_{-1} ( A_{2n-1} ) \otimes M(1)$ is conformally embedded into $V_{-1/2} (C_{2n} )$ and 
$\widetilde{V}_{-1} ( A_{2n-1} ) \otimes M(1)= V_{-1} ( A_{2n-1} ) \otimes M(1)$
\cite[Theorem 5.1, (4)]{AKMPP1};
\item[(b)] by Theorem \ref{thm1.1}, $V_{-n}(sl(2)) \otimes \overline  V_{-2}(A_{n-1})$  is conformally embedded into $V_{-1} ( A_{2n-1} )$ ;
\item[(c)] $V_{-n}(sl(2)) \otimes \widetilde V_{-2}(D_n )$ is conformally embedded into $V_{-1/2} ( C_{2n} )$: see Theorem \ref{thm1.1} (4) in case $n=1$.
\end{itemize}
Start with the following inclusion of vertex algebras:
\begin{equation}\label{one}\overline  V_{-2}(A_{n-1}) \otimes M(1) \hookrightarrow \widetilde V_{-2}(D_n).\end{equation} Tensoring both members of \eqref{one} with $V_{-n}(sl(2))$,  we obtain a chain:
\begin{equation}\label{two} V_{-n}(sl(2))\otimes\overline  V_{-2}(A_{n-1}) \otimes M(1) \hookrightarrow V_{-n}(sl(2))\otimes\widetilde V_{-2}(D_n)\to V_{-1/2} ( C_{2n} ),\end{equation}
where, by (c), the rightmost map is a conformal embedding. So, to prove (1), it suffices to prove that the embedding $V_{-n}(sl(2))\otimes\overline  V_{-2}(A_{n-1}) \otimes M(1)\to V_{-1/2} ( C_{2n} )$ is conformal. This follows from the chain
$$  V_{-n}(sl(2))\otimes\overline  V_{-2}(A_{n-1}) \otimes M(1) \to  V_{-1} ( A_{2n-1} ) \otimes M(1) \to V_{-1/2} ( C_{2n} ),
$$
where the leftmost map is conformal by (b) and the rightmost is conformal by (a). 
\par
We now prove statements (2) and (3).  Recall that  vertex algebras $V_{-2}(D_n  )$ (for $n \ge 4$)   
and $V_{-2} (B_n)$ (for $n \ge 3$)
  have  only finitely many irreducible modules in the category $\mathcal O$ \cite{ArM-I}, \cite{ArM-III}. On the other hand, Lemma \ref{podmodul} shows    that   $ \widetilde  V_{-2}(D_n  )$  
   and $\widetilde V_{-2} (B_n)$
   have infinitely many irreducible modules, and therefore we  conclude that  $ \widetilde  V_{-2}(D_n  )$  (for $n \ge 4$)    
    and $\widetilde V_{-2} (B_n)$  (for $n \ge 3$) 
    cannot be simple.

The  simplicity of $\widetilde V_{-2}(D_3)$  follows from the following facts:
\begin{itemize}
\item The maximal ideal in $V^{-2}(D_3)$ is generated by a unique singular vector $v_{sing}$ of conformal weight $2$ (the vector  $\sigma (v_0)$ from \cite[Theorem 8.2]{ArM-II}, in the case $A_3= D_3$).
\item By \cite[[Theorem 0.2.1]{KacGorelik}, $ \overline V_{-2}(A_{2 }) = V_{-2}(A_{2 }) $. Since  $ V_{-2}(A_{2 }) \otimes M(1) $  is conformally embedded into $\widetilde V_{-2}(D_3)$, then  $v_{sing} = 0$ in $\widetilde V_{-2}(D_3)$.
Indeed, it was proved in \cite{A} that  in the case of conformal embeddings, a singular vector at conformal weight 2  must vanish. Here we have only one singular vector of such conformal weight.  
 \end{itemize}
 So $\widetilde V_{-2}(D_3) =  V_{-2}(D_3)$.
 \end{proof}

\begin{rem}  In our forthcoming paper \cite{AKMPP-2018} we prove that the simple vertex algebra $V_{-2} (B_2)$ embedds  into $M_{(5)}$.   We expect that in the cases $m=5,6$ $M_{(m)}$ is completely reducible.  But for $m \ge 7$ the vertex algebras   $\widetilde V_{-2}(so(m))$ are non-simple,  having a simple maximal ideal (cf. \cite{AKMPP-2018}). \end{rem}


\begin{rem}
Since $V_{-1/2} (C_{n} )$ can be realized using Weyl vertex algebra $M_{(n)}$, the previous theorem gives an explicit bosonic realization of the simple vertex algebra $V_{-2}(A_3) = V_{-2} (D_3)$.  Our result gives a chiralization of  \cite[Theorem 8.13]{ArM-II}, where  it was proved that there is an embedding
of Zhu's algebra $A (V_{-2}(A_3)) $ into the Weyl algebra.  

The conformal embedding in Theorem \ref{thm-simplicity} (1) gives a proper framework for studying conformal embedding $A_{n-1} \times Z $ in $D_n$ at $k=-2$ for which decomposition is still unknown in the cases $n=3,4$.


\end{rem}

We shall now describe branching rules for  conformal embedding $A_{n-1} \times Z $ in $D_n$  realized inside  $V_{-1/2} (C_{2n} )$. For a proof we need an important observation which gives a refinement of  \cite[Theorem 2.4]{AKMPP1}.

\begin{proposition} \label{refine} Assume that an affine vertex algebra $\overline V_k (\g_0)$  is conformally embedded into  $\widetilde V_k (\g)$ and $\widetilde V_k (\g)$ is a vertex subalgebra of  a simple vertex algebra $\mathcal U$.  In the hypothesis of  \cite[Theorem 2.4]{AKMPP1}, we have
$$ \widetilde V_k (\g) = \bigoplus _{q \in {\Z}} \widetilde V_k (\g) ^{(q)},   \quad  0 \ne  \widetilde V_k (\g) ^{(q)} \cdot  \widetilde V_k (\g) ^{(r)}   \subset \widetilde V_k (\g) ^{(q+r)} \quad (q,r \in {\Z}),$$
and each $ \widetilde  V_k (\g) ^{(q)}$ is a cyclic $\overline V_k (\g_0)$--module (i.e., highest weight $\widehat{\g}_0$--module).
\end{proposition}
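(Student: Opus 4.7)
The plan is to construct the $\ganz$-grading from a central element of $\g_0$ and then deduce cyclicity of each graded piece by combining the decomposition already provided by \cite[Theorem 2.4]{AKMPP1} with the simplicity of the ambient vertex algebra $\mathcal U$. Under the hypotheses of \cite[Theorem 2.4]{AKMPP1}, the subalgebra $\g_0$ contains a one-dimensional center whose generator $z$ acts on $\g$ with integer eigenvalues, yielding a short grading $\g = \bigoplus_{q \in \ganz} \g^{(q)}$ with each $\g^{(q)}$ an irreducible $\g_0$-module. I would set $\widetilde V_k(\g)^{(q)}$ equal to the $q$-eigenspace of $z_{(0)}$ on $\widetilde V_k(\g)$; since $z$ is primary of conformal weight one and $z$ is central in $\g_0$, the zero mode $z_{(0)}$ commutes with the Sugawara Virasoro element and with every mode of $\overline V_k(\g_0)$, and acts as a derivation of each $(n)$-th product. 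This immediately gives the decomposition $\widetilde V_k(\g) = \bigoplus_{q \in \ganz} \widetilde V_k(\g)^{(q)}$ together with the multiplicative compatibility $\widetilde V_k(\g)^{(q)}{}_{(n)} \widetilde V_k(\g)^{(r)} \subset \widetilde V_k(\g)^{(q+r)}$; the non-vanishing of the relevant products is forced by the embedding in the simple vertex algebra $\mathcal U$.

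For cyclicity, I first observe that since $z_{(0)}$ commutes with all of $\overline V_k(\g_0)$, Schur's lemma forces $z_{(0)}$ to act as a scalar on each irreducible highest weight $\widehat{\g}_0$-summand of $\widetilde V_k(\g)$; hence each such summand lies entirely inside one $\widetilde V_k(\g)^{(q)}$, and the decomposition of \cite[Theorem 2.4]{AKMPP1} refines to a decomposition of every $\widetilde V_k(\g)^{(q)}$ into highest weight $\widehat{\g}_0$-modules. Cyclicity then amounts to showing that exactly one such summand appears in each $\widetilde V_k(\g)^{(q)}$. The base cases $q = 0$ and $|q| = 1$ follow directly from $\widetilde V_k(\g)^{(0)} = \overline V_k(\g_0)$ and from the irreducibility of $\g^{(\pm 1)}$ as $\g_0$-modules (the vector $y^{\pm}_{(-1)}\vac$ built from a highest weight vector of $\g^{(\pm 1)}$ generates the whole $\widetilde V_k(\g)^{(\pm 1)}$). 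For $|q| \ge 2$, the iterated $(-1)$-product of highest weight vectors in $\g^{(\pm 1)}$ (with the signs chosen so that the total weight equals $q$) produces a candidate cyclic generator of a highest weight $\widehat{\g}_0$-submodule of $\widetilde V_k(\g)^{(q)}$.

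The main obstacle is to rule out the appearance of additional inequivalent highest weight $\widehat{\g}_0$-summands in $\widetilde V_k(\g)^{(q)}$ beyond the cyclic one just produced. Here I would use the simplicity of $\mathcal U$ in an essential way: the invariant bilinear form on $\mathcal U$ is non-degenerate, and so it restricts to a non-degenerate pairing between $\widetilde V_k(\g)^{(q)}$ and $\widetilde V_k(\g)^{(-q)}$. Proceeding by induction on $|q|$, any hypothetical extra summand in $\widetilde V_k(\g)^{(q)}$ would, via this pairing together with the OPE $\widetilde V_k(\g)^{(q)}{}_{(n)} \widetilde V_k(\g)^{(-q)} \subset \widetilde V_k(\g)^{(0)} = \overline V_k(\g_0)$ established in the first step, correspond to an extra highest weight summand in $\widetilde V_k(\g)^{(-q)}$ with matching weight; applying the inductive hypothesis to $\widetilde V_k(\g)^{(-q)}$ (or reducing to smaller $|q|$ by peeling off one factor from the iterated product) yields a contradiction. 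Once this is established, the proposition follows.
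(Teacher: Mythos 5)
First, a caveat: the paper states Proposition \ref{refine} with no proof at all, presenting it as an ``observation'' refining \cite[Theorem 2.4]{AKMPP1}; the intended argument is evidently to rerun the proof of that theorem with the simplicity of $V_k(\g)$ replaced by the simplicity of the ambient algebra $\mathcal U$. Your first paragraph matches that intention and is essentially correct: the grading by eigenvalues of $z_{(0)}$ (well defined because $\widetilde V_k(\g)$ is generated by the graded space $\g$ and $z_{(0)}$ is a derivation of all products), the inclusion $\widetilde V_k(\g)^{(q)}{}_{(n)}\,\widetilde V_k(\g)^{(r)}\subset \widetilde V_k(\g)^{(q+r)}$, and the non-vanishing of products via the absence of zero divisors in the simple vertex algebra $\mathcal U$ are exactly the right ingredients.

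The cyclicity argument, however, contains a genuine gap. You propose to refine ``the decomposition of \cite[Theorem 2.4]{AKMPP1}'' into irreducible highest weight $\widehat{\g}_0$--summands of $\widetilde V_k(\g)$ and then count summands in each $\widetilde V_k(\g)^{(q)}$. But Theorem 2.4 of \cite{AKMPP1} is a statement about the \emph{simple} quotient $V_k(\g)$; the entire point of Proposition \ref{refine} is to extend it to the possibly non-simple $\widetilde V_k(\g)$, so invoking that decomposition for $\widetilde V_k(\g)$ is circular. Moreover your strategy presupposes that each $\widetilde V_k(\g)^{(q)}$ is a direct sum of irreducible highest weight modules, i.e.\ complete reducibility --- precisely what fails here and why the conclusion is only ``cyclic'' rather than ``irreducible'' (compare Corollary \ref{cor-branching}, where $V_{-2}(D_m)^{(\ell)}$ is a proper simple quotient of $\widetilde V_{-2}(D_m)^{(\ell)}$). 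The duality step is also not justified: the invariant form on $\mathcal U$ pairs the $z_{(0)}$--eigenspaces $\mathcal U^{(q)}$ and $\mathcal U^{(-q)}$ non-degenerately, but its restriction to the subspaces $\widetilde V_k(\g)^{(\pm q)}$ of a proper subalgebra may well degenerate, and the concluding induction (``peeling off one factor'') is not actually carried out. What really forces cyclicity are the fusion-rules hypotheses built into \cite[Theorem 2.4]{AKMPP1}, which your argument never uses: they ensure that $\widetilde V_k(\g)^{(q)}$ is spanned by the products $\widetilde V_k(\g)^{(1)}{}_{(n)}\,\widetilde V_k(\g)^{(q-1)}$ (in particular controlling the contributions of $\g^{(-1)}$ to positive degrees), and that the product of two highest weight $\widehat{\g}_0$--modules is again a single highest weight module, so that by induction each graded piece is a quotient of an iterated fusion product of highest weight modules and hence cyclic.
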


\begin{cor} \label{cor-branching}
Assume that $m \ge 5$. Then
$$  \widetilde V_{-2} (D_m) = \sum_{ \ell \in {\Z} } \widetilde V_{-2} (D_m) ^{(\ell)} $$
and each  $\widetilde V_{-2} (D_m) ^{(\ell)} $ is a highest weight $\widehat{gl(m)}$--module at level $-2$.
\end{cor}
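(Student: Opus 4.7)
\medskip

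\noindent\emph{Plan of proof.} The strategy is to apply Proposition \ref{refine} to the conformal embedding
$$\overline V_{-2}(A_{m-1}) \otimes M(1) \hookrightarrow \widetilde V_{-2}(D_m)$$
supplied by Theorem \ref{thm-simplicity}(1), noting that $\overline V_{-2}(A_{m-1}) \otimes M(1)$ is, up to the usual identification of the Heisenberg $M(1)$ with the affinization of the one-dimensional center, the affine vertex algebra associated with the Levi subalgebra $\mathfrak{gl}(m) = \mathfrak{sl}(m) \oplus Z$ of $D_m$ at level $-2$. Consequently any cyclic $\overline V_{-2}(A_{m-1}) \otimes M(1)$-module is precisely a highest weight $\widehat{\mathfrak{gl}(m)}$-module at level $-2$, which will translate the conclusion of Proposition \ref{refine} into the statement of the corollary.

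First, I would verify the hypotheses of Proposition \ref{refine}. The conformal embedding is given. A simple ambient vertex algebra is $\mathcal U = M_{(2m)}$: indeed $\widetilde V_{-2}(D_m) \subset V_{-1/2}(C_{2m}) \subset M_{(2m)}$, and the Weyl vertex algebra $M_{(2m)}$ is simple.

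Second, I would produce the $\mathbb{Z}$-grading. Let $J \in \widetilde V_{-2}(D_m)$ be the Heisenberg field corresponding to a suitable multiple of the central element of $\mathfrak{gl}(m)$. Under the adjoint action, $D_m$ decomposes as
$$D_m = \mathfrak{gl}(m) \oplus {\bigwedge}^2\mathbb{C}^m \oplus {\bigwedge}^2(\mathbb{C}^m)^*,$$
with $Z$ acting by eigenvalues $0,+2,-2$. Normalizing $J$ so that its zero mode acts with eigenvalues $\pm 1$ on the defining $\mathfrak{gl}(m)$-module $\mathbb{C}^m \oplus (\mathbb{C}^m)^* \subset \mathbb{C}^{2m}$ ensures that $J_0$ has integer eigenvalues on all generators $a_i^{\pm}$ of $M_{(2m)}$, and hence on the whole subalgebra $\widetilde V_{-2}(D_m)$. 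Setting $\widetilde V_{-2}(D_m)^{(\ell)} := \{v : J_0 v = \ell v\}$ then gives a direct sum decomposition $\widetilde V_{-2}(D_m) = \bigoplus_{\ell \in \mathbb{Z}} \widetilde V_{-2}(D_m)^{(\ell)}$ with the required graded product property $\widetilde V_{-2}(D_m)^{(q)} \cdot \widetilde V_{-2}(D_m)^{(r)} \subset \widetilde V_{-2}(D_m)^{(q+r)}$.

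Finally, applying Proposition \ref{refine} immediately yields that each $\widetilde V_{-2}(D_m)^{(\ell)}$ is a cyclic $\overline V_{-2}(A_{m-1}) \otimes M(1)$-module, i.e. a highest weight $\widehat{\mathfrak{gl}(m)}$-module at level $-2$. The main technical point is the normalization check ensuring the grading is genuinely by integers; this is essentially bookkeeping in the Weyl algebra realization, where the charges of the bosonic generators $a_i^\pm$ under the rescaled Heisenberg current are $\pm 1$.
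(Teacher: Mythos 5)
Your overall strategy coincides with the paper's: both proofs consist of feeding the conformal embedding $\overline V_{-2}(A_{m-1})\otimes M(1)\hookrightarrow \widetilde V_{-2}(D_m)$ of Theorem \ref{thm-simplicity}(1), together with the simple ambient algebra $\mathcal U=M_{(2m)}$, into Proposition \ref{refine}. However, there is a genuine gap in what you actually verify. Proposition \ref{refine} is only applicable ``in the hypothesis of [AKMPP1, Theorem 2.4]'', and those hypotheses are not exhausted by the existence of the integral charge grading coming from the center of $\mathfrak{gl}(m)$. The grading by eigenvalues of $J_0$ only tells you that $\widetilde V_{-2}(D_m)$ splits as a direct sum of charge components with the multiplicative property $\widetilde V^{(q)}\cdot\widetilde V^{(r)}\subset\widetilde V^{(q+r)}$; a priori each component could still be a direct sum of many highest weight $\widehat{\mathfrak{gl}(m)}$--modules. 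The substantive content of the corollary --- that each charge component is a \emph{single} cyclic (highest weight) module --- rests on the fusion-rules conditions of [AKMPP1, Theorem 2.4], which the paper checks by referring to the proof of [AKMPP1, Theorem 5.1(3)]. Your write-up never addresses these conditions.

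The tell-tale symptom is that nothing in your argument uses the hypothesis $m\ge 5$: the decomposition $D_m=\mathfrak{gl}(m)\oplus\bigwedge^2\mathbb{C}^m\oplus\bigwedge^2(\mathbb{C}^m)^*$ and the normalization of $J$ work equally well for $m=3,4$, yet the corollary is stated only for $m\ge 5$. The restriction enters precisely through the fusion-rules verification you omit. To close the gap you should add the step the paper takes: invoke (or reprove) the argument of [AKMPP1, Theorem 5.1(3)] showing that the conditions of [AKMPP1, Theorem 2.4] hold for $m\ge5$, and only then conclude via Proposition \ref{refine}. The bookkeeping on the charges of $a_i^\pm$, while correct, is the routine part of the proof rather than its core.
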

\begin{proof}
In the case $m \ge 5$, we proved in \cite[Theorem 5.1]{AKMPP1} that $$V_{-2} (D_m) = \sum_{ \ell \in {\Z} } V_{-2} (D_m) ^{(\ell)},$$ and each  $V_{-2} (D_m) ^{(\ell) }$ is an irreducible $V_{-2} (gl(m))$--module. Our proof used a fusion rules method. We shall now extend this result to a non-simple vertex algebra $\widetilde V_{-2} (D_m)$. 
Note that  $\widetilde V_{-2} (D_m)$ is realized as a subalgebra of the simple vertex algebra $M_{(2 m)}$.
As in  the  proof of \cite[Theorem 5.1 (3)]{AKMPP1}   we conclude that the conditions of \cite[Theorem 2.4]{AKMPP1} hold for $m \ge 5$. Now Proposition  \ref{refine} implies that
\bea  \widetilde V_{-2} (D_m) = \sum_{ \ell \in {\Z} } \widetilde V_{-2} (D_m) ^{(\ell)},\eea 
and each  $\widetilde V_{-2} (D_m) ^{(\ell) }$ is a highest weight    $\overline V_{-2} (A_{m-1} ) \otimes M(1)$--module. Moreover, $V_{-2} (D_m) ^{(\ell) }$ is a simple quotient of $\widetilde V_{-2} (D_m) ^{(\ell) }$. 
\end{proof}

 \section{ Howe dual pairs and the  Linshaw-Schwarz-Song's  method}\label{LSS}

In this section we shall  combine results from previous Section  and the methods from  the paper  \cite{LSS}. As a consequence we will get a new  realization of Howe dual pairs of affine vertex algebras at negative levels.

We now recall the setting of \cite{LSS}. Given a vector space $V$, denote by $\mathcal S(V)$ the vertex algebra with even generators $\beta^x$, $\gamma^{x'}$, $x\in V$, $x'\in V^*$ with $\lambda$--brackets
$$
[\be^x{}_\l \gamma^{x'}]=x'(x),\ [{\be^x}_\l \be^{y}]=0,\ [\gamma^{x'}{}_\l \gamma^{y'}]=0.
$$
If $V$ is a module for a reductive Lie algebra $\g=\sum_i \g_i$, then one can define a vertex algebra map $\hat\tau: \otimes_iV^{-k_i}(\g_i)\to \mathcal S(V)$ by setting 
$$
\hat\tau(X)=-\sum_{i} :\gamma^{x'_i}\be^{X\cdot x_i}:,
$$
where $\{x_i\}$ is a basis of $V$ and $\{x'_i\}$ is its dual basis, and $k_i$ is the ratio between the trace form induced by $V$ and the normalized invariant form of $\g_i$.
We specialize to $V=\C^2\otimes \C^m$ and $\g=gl(2)\times gl(m)$. 

Given a pair $(U,\langle\cdot,\cdot\rangle)$, where $U$ is a vector space and  $\langle\cdot,\cdot\rangle$ is a symplectic form, denote by  $M(U)$  the universal vertex algebra with generators $u\in U$ and $\l$--bracket defined by
$
[u_\l v]=\langle u,v \rangle
$.
Note that choosing $U=\C^{2m}$ with the standard symplectic form given by $\langle e_i,e_j\rangle =\d_{j,i+m}$ for $i=1,\ldots,m$; $j=1,\ldots,2m$, one has that the map $e_i\mapsto a^+_i$, $e_{i+m}\mapsto a^-_i$, $i=1,\ldots,m$, gives an isomorphism between $M(\C^{2m})$ and $M_{(m)}$. On the other hand, choosing $U=V\oplus V^*$ with the symplectic form such that $V$ and $V^*$ are isotropic and $\langle x, x'\rangle=x'(x)$ for $x\in V$ and $x'\in V^*$, one obtains that $\mathcal S(V)=M(V\oplus V^*)$.

Given  pairs $(U,\langle\cdot,\cdot\rangle)$, $(U',\langle\cdot,\cdot\rangle')$ then any linear isomorphism $\phi:U\to U'$ preserving the symplectic structures induces an isomorphism of vertex algebras between $M(U)$ and $M(U')$. In particular we define
$\phi: (\C^2\otimes \C^m)\oplus (\C^2\otimes \C^m)^*\to \C^{4m}$ by setting
$$
e_1\otimes e_j\mapsto \frac{1}{\sqrt{2}}(e_j+\sqrt{-1} e_{m+j}),\ e_2\otimes e_j\mapsto \frac{1}{\sqrt{2}}(e_{2m+j}+\sqrt{-1} e_{3m+j}),
$$
$$e^1\otimes e^j\mapsto \frac{1}{\sqrt{2}}(e_{2m+j}-\sqrt{-1} e_{3m+j}),\ e^2\otimes e^j\mapsto -\frac{1}{\sqrt{2}}(e_j-\sqrt{-1} e_{m+j}),\ 
$$  

The symplectic isomorphism $\phi$ induces an isomorphism 
$$\Phi:\mathcal S(\C^2\otimes \C^m)\to M_{(2m)}.
$$

We now compute $\Phi(\hat\tau( V^{-m}(sl(2))\otimes  V^{-2}(sl(m))))$. We claim that
$$\Phi(\hat\tau( V^{-m}(sl(2))\otimes  V^{-2}(sl(m))))=V_{-m}(sl(2))\otimes \ov V_{-2}(A_{m-1}).
$$
It is enough to compute the image under $\Phi\circ \hat\tau$ on generators of $sl(2)\times sl(m)$:

$$
\Phi(\hat\tau(e))=\Phi(-\sum_{i=1}^m:\gamma^{e^2\otimes e^i}\be^{e_1\otimes e_i}:)=-\frac{1}{2}\sum_{i=1}^{2m}:a^+_ia^+_i:.
$$
$$
\Phi(\hat\tau(h))=\Phi(-\sum_{i=1}^m(:\gamma^{e^1\otimes e^i}\be^{e_1\otimes e_i}:-:\gamma^{e^2\otimes e^i}\be^{e_2\otimes e_i}:)=-\sum_{i=1}^{2m}:a^+_ia^-_i:.
$$
$$
\Phi(\hat\tau(f))=\Phi(-\sum_{i=1}^m:\gamma^{e^1\otimes e^i}\be^{e_2\otimes e_i}:)=-\frac{1}{2}\sum_{i=1}^{2m}:a^-_ia^-_i:.
$$
If $i\ne j$, with notation as in \eqref{E}
\begin{align*}
\Phi(\hat\tau(E_{ij}))&=\Phi(-\sum_{r=1}^2 :\gamma^{e^r\otimes e^j}\be^{e_r\otimes e_i}:)=:a^+_ia^-_j:+\sqrt{-1}:a^+_{m+i}a^-_j:\\&-\sqrt{-1}:a^+_{i}a^-_{m+j}:+:a^+_{m+i}a^-_{m+j}:-:a^+_{j}a^-_{i}:-\sqrt{-1}:a^+_{j}a^-_{m+i}:\\&+\sqrt{-1}:a^+_{m+j}a^-_{i}:-:a^+_{m+j}a^-_{m+i}:=(\mathcal E_{\epsilon_j-\epsilon_i})_{(-1)}\vac.
\end{align*}
Finally, if $i=j$,
\begin{align*}
\Phi(\hat\tau(E_{ii}))&=\Phi(-\sum_{r=1}^2 :\gamma^{e^r\otimes e^i}\be^{e_r\otimes e_i}:)=\sqrt{-1}(:a^+_ia^-_{m+i}:-:a^+_{m+i}a^-_i:)=-(h_i)_{(-1)}\vac,
\end{align*}
hence $\Phi(\hat\tau(E_{ii}-E_{jj}))\in \ov V_{-2}(A_{m-1})$. 

As $gl(m)=\C I \oplus sl(m)$, we denote by $V^{-2}(gl(m))$ the vertex algebra $V^{-2}(sl(m))\otimes M(1)$. Note that the above computations show that $\Phi(\hat \tau(V^{-2}(gl(m))))\subset \widetilde V_{-2}(D_m)$.

\begin{proposition} \label{g-1} For $m \ge 3$ we have
\begin{equation}\label{com}Com(V_{-m } (sl(2)), M_{(2 m)} ) = \widetilde V_{-2} (so(2 m)).\end{equation}
\end{proposition}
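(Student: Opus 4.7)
The inclusion $\widetilde V_{-2}(so(2m)) \subseteq \mathrm{Com}(V_{-m}(sl(2)), M_{(2m)})$ is immediate from the conformal embedding $V_{-m}(sl(2))\otimes\widetilde V_{-2}(so(2m))\hookrightarrow V_{-1/2}(sp(4m))\subset M_{(2m)}$ established in Theorem \ref{thm1.1}(4): the two tensor factors automatically mutually commute in a conformal embedding.

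For the reverse inclusion, the plan is first to localize the computation to the even part of the Weyl algebra. The Cartan element $h\in sl(2)$ acts by $-\sum_i{:}a^+_i a^-_i{:}$, whose eigenvalue on a normally-ordered monomial in the $a^{\pm}_i$ agrees in parity with its degree, so any $sl(2)$-invariant of $M_{(2m)}$ must sit in the even subalgebra $M_{(2m)}^{0}=V_{-1/2}(sp(4m))$. Hence it suffices to prove $\mathrm{Com}(V_{-m}(sl(2)),V_{-1/2}(sp(4m)))\subseteq\widetilde V_{-2}(so(2m))$.

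The next step is to transfer the problem to the $\beta\gamma$-system via the symplectic isomorphism $\Phi:\mathcal S(\mathbb C^2\otimes\mathbb C^m)\to M_{(2m)}$ constructed just above the statement. Under $\Phi$, the explicit formulas given there show that $\hat\tau(V^{-m}(sl(2)))$ maps onto $V_{-m}(sl(2))\subset M_{(2m)}$. At this point I would invoke the commutant theorem of Linshaw--Schwarz--Song (\cite{LSS}), applied to the natural map $\hat\tau:V^{-m}(sl(2))\otimes V^{-2}(gl(m))\to\mathcal S(\mathbb C^2\otimes\mathbb C^m)$. That theorem identifies $\mathrm{Com}(\hat\tau(V^{-m}(sl(2))),\mathcal S(\mathbb C^2\otimes\mathbb C^m))$ as a vertex algebra with an explicit set of strong generators; these generators are quadratic in the $\beta,\gamma$ fields, and under $\Phi$ they become quadratic expressions in the $a^{\pm}_i$ that span precisely the image of $so(2m)\subset sp(4m)$ inside $V_{-1/2}(sp(4m))$ (extending the $gl(m)$-generators $\Phi(\hat\tau(E_{ij}))=(\mathcal E_{\epsilon_j-\epsilon_i})_{(-1)}\vac$ computed above by the analogous root vectors of $so(2m)/gl(m)$). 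Transporting this description back through $\Phi^{-1}$ yields the reverse inclusion.

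The main obstacle is matching quotients. What \cite{LSS} produces a priori is the universal commutant, which comes with a surjection from some quotient of $V^{-2}(so(2m))$, whereas $\widetilde V_{-2}(so(2m))$ is defined by its realization inside $V_{-1/2}(sp(4m))$. Both are realized as the vertex subalgebra of the simple vertex algebra $M_{(2m)}$ generated by the same set of quadratic fields in the $a^{\pm}_i$, so uniqueness of the subalgebra generated inside $M_{(2m)}$ identifies them. Handling this identification cleanly, and in particular checking that no extra generators of the commutant arise beyond the image of $V^{-2}(so(2m))$, is the technical heart of the argument and is exactly where the Linshaw--Schwarz--Song strong-generation results are needed; the restriction $m\ge 3$ enters to ensure that the small-$m$ degenerations (where $sp(4m)$ is too small to accommodate the full $so(2m)$-action of the dual pair) are avoided.
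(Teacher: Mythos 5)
Your proposal is correct and follows essentially the same route as the paper: the easy inclusion from the conformal embedding, transfer to $\mathcal S(\C^2\otimes\C^m)$ via $\Phi$, and the Linshaw--Schwarz--Song strong generation of $\mathcal S(\C^2\otimes\C^m)^{sl_2[t]}$ by $\hat\tau(V^{-2}(gl(m)))$ together with the quadratic generators $D_{i,j},D'_{i,j}$, whose images under $\Phi$ the paper computes explicitly to be the root vectors $(\mathcal E_{-\epsilon_i-\epsilon_j})_{(-1)}\vac$ and $(\mathcal E_{\epsilon_i+\epsilon_j})_{(-1)}\vac$ of $so(2m)$. The only difference is cosmetic: the reduction to the even subalgebra is unnecessary, and the ``quotient matching'' you flag as the technical heart is immediate since both sides are by definition the vertex subalgebra of the simple algebra $M_{(2m)}$ generated by the same quadratic fields.
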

\begin{proof} It is clear that $Com(V_{-m } (sl(2)), M_{(2 m)} ) \supset \widetilde V_{-2} (so(2 m))$. By   \cite[Theorem 4.3]{LSS}, $\mathcal S(\C^2\otimes \C^m)^{sl_2[t]}$ is generated by $\hat\tau(V^{-2}(gl(m)))$ together with  generators 
\begin{align*}D_{i,j}&=:\be^{e_1\otimes e_i}\be^{e_2\otimes e_j}:-:\be^{e_2\otimes e_i}\be^{e_1\otimes e_j}:, \ &&\{i,j\}\subset\{1,\ldots,m\},\\
D_{i,j}'&=:\gamma^{e^1\otimes e^i}\gamma^{e^2\otimes e^j}:-:\gamma^{e^2\otimes e^i}\gamma^{e^1\otimes e^j}:,\ &&\{i,j\}\subset\{1,\ldots,m\}.
\end{align*}

We already saw that $\Phi(\hat\tau(V^{-2}(gl(m))))\subset \widetilde V_{-2} (so(2 m))$. As for $\Phi(D_{i,j}),\Phi(D'_{i,j})$ we have
\begin{align*}
&2\Phi(D_{i,j})=:a^+_ia^-_j:+\sqrt{-1}:a^+_{i}a^-_{m+j}:+\sqrt{-1}:a^+_{m+i}a^-_{j}:-:a^+_{m+i}a^-_{m+j}:\\&-:a^+_{j}a^-_{i}:-\sqrt{-1}:a^+_{j}a^-_{m+i}:-\sqrt{-1}:a^+_{m+j}a^-_{i}:+:a^+_{m+j}a^-_{m+i}:=(\mathcal E_{-\epsilon_j-\epsilon_i})_{(-1)}\vac.
\end{align*}
Likewise
\begin{align*}
&-2\Phi(D'_{i,j})=:a^+_ja^-_i:-\sqrt{-1}:a^+_{m+j}a^-_{i}:-\sqrt{-1}:a^+_{j}a^-_{m+i}:-:a^+_{m+j}a^-_{m+i}:\\&-:a^+_{i}a^-_{j}:+\sqrt{-1}:a^+_{i}a^-_{m+j}:+\sqrt{-1}:a^+_{m+i}a^-_{j}:+:a^+_{m+i}a^-_{m+j}:=(\mathcal E_{\epsilon_i+\epsilon_j})_{(-1)}\vac.
\end{align*}
\end{proof}

\begin{cor} \label{cor-11} The following vertex subalgebras form   Howe dual pairs:
\begin{itemize}
\item[(1)] $V_{-m} (sl(2))$ and $\widetilde V_{-2} (D_m)$ inside  $V_{-1/2} (C_{2m})$ for $m \ge 3$.
\item[(2)] $V_{-m} (sl(2))$ and $\overline V_{-2} (A_{m-1})$ inside  $V_{-1} (A_{2m-1})$ for $m \ge 5$.
\end{itemize}

\end{cor}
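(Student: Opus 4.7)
The plan is to derive both Howe dual pair statements from commutant computations in larger free-field vertex algebras, using Proposition \ref{g-1} as the main ingredient together with its ``symmetric'' analogue and the conformal embeddings already established in the paper.

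For part (1), one direction is nearly free. Since $V_{-m}(sl(2)) \subset V_{-1/2}(C_{2m}) \subset M_{(2m)}$, and by Proposition \ref{g-1} the commutant $Com(V_{-m}(sl(2)), M_{(2m)}) = \widetilde V_{-2}(so(2m)) = \widetilde V_{-2}(D_m)$ already lies inside $V_{-1/2}(C_{2m})$, one obtains
\[
Com(V_{-m}(sl(2)), V_{-1/2}(C_{2m})) \;=\; Com(V_{-m}(sl(2)), M_{(2m)}) \cap V_{-1/2}(C_{2m}) \;=\; \widetilde V_{-2}(D_m).
\]
For the reverse commutant I would prove the symmetric statement $Com(\widetilde V_{-2}(D_m), M_{(2m)}) = V_{-m}(sl(2))$, which is the chiral analogue of classical Howe duality for the pair $(O(2m), Sp(2))$ acting on the Weyl algebra. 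This can be established by the same LSS-style invariant theory argument used in the proof of Proposition \ref{g-1}, with the roles of $sl(2)$ and $so(2m)$ exchanged: one identifies the joint commutant of the affine $so(2m)$-action on $M_{(2m)}$ by combining the classical $O(2m)$-invariant theory with the vertex-algebraic refinement in the style of \cite{LSS}. Intersecting this commutant with $V_{-1/2}(C_{2m})$ then yields the desired equality, since $V_{-m}(sl(2)) \subset V_{-1/2}(C_{2m})$.

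For part (2), I would invoke the equal-rank conformal embedding $V_{-1}(A_{2m-1}) \otimes M(1) \hookrightarrow V_{-1/2}(C_{2m})$ from \cite[Theorem 5.1(4)]{AKMPP1}, where $V_{-1}(A_{2m-1})$ and $M(1)$ realize mutual commutants in $V_{-1/2}(C_{2m})$. Since $V_{-m}(sl(2)) \subset V_{-1}(A_{2m-1})$, part (1) gives
\[
Com(V_{-m}(sl(2)), V_{-1}(A_{2m-1})) \;=\; \widetilde V_{-2}(D_m) \cap V_{-1}(A_{2m-1}).
\]
By Theorem \ref{thm-simplicity}(1), $\overline V_{-2}(A_{m-1}) \otimes M(1)$ is conformally embedded in $\widetilde V_{-2}(D_m)$; since $\overline V_{-2}(A_{m-1}) \subset V_{-1}(A_{2m-1})$ while $M(1)$ realizes the commutant of $V_{-1}(A_{2m-1})$ in $V_{-1/2}(C_{2m})$, the elements of $\widetilde V_{-2}(D_m)$ lying in $V_{-1}(A_{2m-1})$ are precisely those commuting with $M(1)$ inside $\widetilde V_{-2}(D_m)$, that is, $\overline V_{-2}(A_{m-1})$. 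The reverse commutant $Com(\overline V_{-2}(A_{m-1}), V_{-1}(A_{2m-1})) = V_{-m}(sl(2))$ follows analogously, by intersecting the dual commutant from part (1) with $V_{-1}(A_{2m-1})$ and applying the same conformal-embedding bookkeeping.

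The principal obstacle is the dual commutant computation $Com(\widetilde V_{-2}(D_m), M_{(2m)}) = V_{-m}(sl(2))$: while its classical shadow is standard $(O(2m), Sp(2))$ Howe duality, promoting it to a chiral statement requires an adapted LSS-type invariant theory argument for $so(2m)[t]$-invariants in the Weyl vertex algebra, controlling both the generators and relations. A secondary technical point is the clean identification $\widetilde V_{-2}(D_m) \cap V_{-1}(A_{2m-1}) = \overline V_{-2}(A_{m-1})$ in part (2), which hinges on the mutual commutant property of the nested conformal embeddings and the hypothesis $m \ge 5$ ensuring non-degeneracy (via Theorem \ref{thm-simplicity}(1)).
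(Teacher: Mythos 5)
Your treatment of part (1) is essentially the paper's: the paper derives (1) directly from Proposition \ref{g-1} via the same restriction
$Com(V_{-m}(sl(2)), V_{-1/2}(C_{2m})) = Com(V_{-m}(sl(2)), M_{(2m)}) \cap V_{-1/2}(C_{2m}) = \widetilde V_{-2}(D_m)$,
and, like you, it leaves the dual commutant $Com(\widetilde V_{-2}(D_m), V_{-1/2}(C_{2m})) = V_{-m}(sl(2))$ at the level of what is implicit in the Linshaw--Schwarz--Song machinery; your plan to obtain it from classical $(O(2m),Sp(2))$ Howe duality promoted to arc-space invariants is the natural completion and is consistent with the paper's framework.

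For part (2) you diverge from the paper, and here there is a genuine gap. The paper's proof rests on Corollary \ref{cor-branching}: $\widetilde V_{-2}(D_m) = \sum_{\ell} \widetilde V_{-2}(D_m)^{(\ell)}$ with each charge component a cyclic highest weight $\overline V_{-2}(A_{m-1}) \otimes M(1)$--module; combined with the identification of the charge-zero component of $M_{(2m)}$ with $V_{-1}(A_{2m-1}) \otimes M(1)$ from \cite{AP-JAA}, this pins down $\widetilde V_{-2}(D_m)^{(0)} = \overline V_{-2}(A_{m-1}) \otimes M(1)$ and yields both commutant equalities. Your replacement of this by ``mutual commutant bookkeeping'' fails at two points. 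First, the claim that the elements of $\widetilde V_{-2}(D_m)$ lying in $V_{-1}(A_{2m-1})$ are exactly $\overline V_{-2}(A_{m-1})$ does not follow from the conformal embedding of Theorem \ref{thm-simplicity}(1) alone: $\widetilde V_{-2}(D_m)$ is not simple, and a conformal embedding does not preclude its charge-zero component from being strictly larger than $\overline V_{-2}(A_{m-1}) \otimes M(1)$ --- ruling this out is precisely the content of Corollary \ref{cor-branching}, which you never invoke. Second, your reverse commutant step is logically insufficient: since $\overline V_{-2}(A_{m-1}) \subset \widetilde V_{-2}(D_m)$ and commutants reverse inclusions, intersecting $Com(\widetilde V_{-2}(D_m), V_{-1/2}(C_{2m})) = V_{-m}(sl(2))$ with $V_{-1}(A_{2m-1})$ only gives $Com(\overline V_{-2}(A_{m-1}), V_{-1}(A_{2m-1})) \supseteq V_{-m}(sl(2))$, not the required equality. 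Relatedly, the hypothesis $m \ge 5$ enters through Corollary \ref{cor-branching} (via \cite[Theorem 5.1]{AKMPP1}), not through Theorem \ref{thm-simplicity}(1), which already holds for $n \ge 3$.
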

\begin{proof}
The proof of assertion (1) follows from Proposition \ref{com}.

 By using Corollary  \ref{cor-branching}  we get
\bea  \widetilde V_{-2} (D_m) = \sum_{ \ell \in {\Z} } \widetilde V_{-2} (D_m) ^{(\ell)},\eea 
and each  $\widetilde V_{-2} (D_m) ^{(\ell) }$ is a highest weight    $\overline V_{-2} (A_{m-1} ) \otimes M(1)$--module. So $V_{-m} (sl(2))$ and $\overline V_{-2} (A_{m-1} ) \otimes M(1)$ form a Howe pair inside the charge zero component of $M_{(2m)}$, which,  by \cite{AP-JAA}, is  isomorphic to $V_{-1} (A_{2m-1}) \otimes M(1)$. This proves assertion (2).
\end{proof}

\begin{rem} \label{rem-81} Note that Corollary \ref{cor-11} in the case $m=4$   partially proves the   Conjecture (5.3) of D. Gaiotto from his recent paper  \cite{Ga}.  The conjecture is written as
$$ \frac{Sb( {\C} ^{16} )}{\widehat {SU}(2)_{-4} } \cong \widehat{SO}(8)_{-2}. $$
Since,  in Gaiotto's notation,  $Sb( {\C} ^{16})$ is precisely our $M_{(8)}$, the coset is indeed the  affine vertex algebra $\widetilde V_{-2} (D_4)$, but it is not simple (as is probably expected in \cite{Ga}).
\end{rem}


 


\section{The decomposition in the case $m=3$ and an $q$-series identity}

\label{m3}

 In this section we prove the complete reducibility of $M_{(3)}$ as $V_{-3/2} (sl(2)) \otimes V_{-4} (sl(2))$--module and find the explicit decomposition. It is interesting that this decomposition gives a vertex-algebraic interpretation of the  $q$--series identity from \cite[Example 5.2]{KW-1994} (see also  \cite[Theorem 4]{Ono}). This suggests that other  decompositions from the previous section are related to certain $q$--series  identities.


Let 
$$\phi(q) =  \prod_{n \ge 1} (1-q^n).$$ 
We shall first identify characters of $V_k(sl(2))$--modules for $k =-3/2$ and $k =-4$.

\begin{lemma} \label{karakteri} Let $ \ell \in {\Z_{\ge 0}}$. We have:
\begin{align*} \mbox{ch}_q  L_{\widehat{sl(2)}} (- (\frac{3}{2} + \ell  ) \Lambda_0 +  \ell  \Lambda_1) &=  q^{3/8}   \phi(q)  ^{-3}  (\ell +1) q^{\frac{\ell  ( \ell+2)}{2} } ,   \\
\mbox{ch}_q   L_{\widehat{sl(2)}} (- (4 +  2 \ell  ) \Lambda_0 +    2 \ell \Lambda_1) &= q^{-1/4}    \phi(q) ^{-3} \sum_{i=0}^ {\ell} (-1) ^{\ell -i}  (2 i +1) q^{-\frac{i  (i +1)}{2} }. 
\end{align*}
\end{lemma}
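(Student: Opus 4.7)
I would treat the two character formulas separately, each via a singular vector analysis of the $\widehat{sl(2)}$--Verma/Weyl module at the respective level.

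First I would collect the basic spectral data. At $k=-3/2$ one has $k+h^\vee=1/2$, Sugawara central charge $c=3k/(k+2)=-9$, and the top $sl(2)$--module $L_{sl(2)}(\ell\omega_1)$ has dimension $\ell+1$ and conformal weight
$$h_\ell=\frac{(\ell\omega_1,\ell\omega_1+2\rho_{sl(2)})}{2(k+h^\vee)}=\frac{\ell(\ell+2)}{2}.$$
At $k=-4$ one has $k+h^\vee=-2$, $c=6$, and the top $L_{sl(2)}(2\ell\omega_1)$ has dimension $2\ell+1$ and conformal weight $-\ell(\ell+1)/2$. In each case the prefactor ($q^{3/8}$, respectively $q^{-1/4}$) matches $q^{-c/24}$, and the leading $q$--powers in the two formulas match these conformal weights.

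For the first identity I would verify that the Weyl module
$$V^{-3/2}(\Lambda):=\mathrm{Ind}_{sl(2)\otimes\CC[t]\oplus\CC K}^{\widehat{sl(2)}}L_{sl(2)}(\ell\omega_1)$$
is already irreducible, using the Kac--Kazhdan determinant: the conditions $(\Lambda+\rho,\gamma)=m(\gamma,\gamma)/2$ for $\gamma$ a positive real affine root and $m\in\ZZ_{>0}$ are checked to produce singular vectors lying inside the submodule $U(\widehat{n}_-)\,f^{\ell+1}v_\Lambda$ that is quotiented out in forming the Weyl module, so no further proper submodule survives. Granted irreducibility, the PBW basis of $U(t^{-1}sl(2)[t^{-1}])$ yields
$$\mbox{ch}_q L_{\widehat{sl(2)}}(\Lambda)=q^{-c/24}(\ell+1)q^{h_\ell}\prod_{n\geq 1}(1-q^n)^{-3}=q^{3/8}(\ell+1)q^{\ell(\ell+2)/2}\phi(q)^{-3}.$$

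For the second identity the Weyl module $V^{(\ell)}:=\mathrm{Ind}\,L_{sl(2)}(2\ell\omega_1)$ at $k=-4$ is \emph{not} irreducible: Kac--Kazhdan at $k+h^\vee=-2$ exhibits singular vectors precisely at the lower conformal weights $-i(i+1)/2$ for $0\leq i<\ell$, corresponding to the Weyl modules $V^{(i)}$ with top $L_{sl(2)}(2i\omega_1)$. Using the explicit Malikov--Feigin--Fuchs singular vector formulas, I would construct a BGG--type resolution
$$0\to V^{(0)}\to V^{(1)}\to\cdots\to V^{(\ell-1)}\to V^{(\ell)}\to L(\Lambda)\to 0,$$
and then compute the Euler characteristic against $\mbox{ch}_q V^{(i)}=(2i+1)q^{-i(i+1)/2}\phi(q)^{-3}$. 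Multiplying by $q^{-c/24}=q^{-1/4}$ yields the second formula.

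The main obstacle is the rigorous verification of exactness of the BGG--type resolution in the second case: Kac--Kazhdan only guarantees existence of singular vectors at the predicted weights, and showing they generate the full maximal submodule at each stage, and that the induced differentials form an acyclic complex, requires either a careful degree count with the Malikov--Feigin--Fuchs formulas or an induction on $\ell$. A potentially cleaner alternative is to realize these simple modules inside the Weyl vertex algebra $M_{(3)}$ via the singular vectors $\varphi_{(-1)}^k\vac$ constructed in Lemma~\ref{podmodul} and read the characters off the known character of $M_{(3)}$; however this route is only viable if the decomposition of $M_{(3)}$ is established independently, lest it create a circular dependence with the use of the present lemma in Theorem~\ref{m3-dec}.
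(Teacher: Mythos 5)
Your proposal is correct in substance and its skeleton coincides with the paper's: both formulas are obtained from the generalized Verma (Weyl) modules $V^{k}(r\omega_1)$, the first by proving irreducibility of $V^{-3/2}(\ell\omega_1)$, the second by an alternating sum coming from the submodule structure of $V^{-4}(2\ell\omega_1)$; your spectral data ($c=-9$ resp.\ $c=6$, conformal weights $\ell(\ell+2)/2$ and $-i(i+1)/2$) match what the paper uses. The differences lie in the technical devices. For $k=-3/2$ the paper does not invoke the Kac--Kazhdan determinant: it proves irreducibility by applying quantum Hamiltonian reduction, which sends the Weyl module to an irreducible Virasoro module at central charge $c=-2$ (following the argument of \cite[Theorem 5.3]{AKMPP-JJM}); your determinant route would additionally require controlling which Verma singular vectors survive in the quotient defining the Weyl module at the non-admissible level $k+h^\vee=1/2$, which is precisely the step you leave unverified. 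For $k=-4$ the paper does not construct a BGG-type resolution: it asserts the sharper structural fact that for $\ell\ge 1$ the maximal submodule of $V^{-4}(2\ell\omega_1)$ is \emph{irreducible} and generated by the singular vector of $\g$--weight $2(\ell-1)\omega_1$ (together with simplicity of the vacuum module $V^{-4}(sl(2))$). This immediately gives the short exact sequence
\begin{equation*}
0\to L_{\widehat{sl(2)}}(-(4+2(\ell-1))\Lambda_0+2(\ell-1)\Lambda_1)\to V^{-4}(2\ell\omega_1)\to L_{\widehat{sl(2)}}(-(4+2\ell)\Lambda_0+2\ell\Lambda_1)\to 0
\end{equation*}
and the alternating sum by induction on $\ell$, which is exactly what disposes of the exactness problem you flag as the main obstacle (and renders the full resolution unnecessary). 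So the gap you identify is real, but it is the same one the paper closes by its structural claims, likewise stated there with details omitted. Your closing caution about circularity is well placed: the paper proves the lemma independently of the $M_{(3)}$ realization and only then feeds it into Theorem \ref{m3-dec}.
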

\begin{proof}
For $ r \in {\Z}_{\ge 0}$ and $k \in {\C}$  denote by $V^k (r\omega_1) $ the generalized Verma module
$$ V^k (r \omega_1) = U(\widehat{sl(2)}) \otimes _{U (\mathfrak p) }  L_{sl(2)} (r\omega_1),$$
where  $\mathfrak p = sl(2)\otimes {\C}[t] + {\C} K$    and $L_{sl(2)} (r\omega_1)$ denotes the  irreducible $(r+1)$--dimensional $sl(2)$--module, regarded as $\mathfrak p$--module on which $K$ acts by  $k \mbox{Id}$  and $sl(2)\otimes t {\mathbb C} [t]$ acts trivially.\par
The proof of the lemma is a consequence of  the following facts  from the structure theory of Verma modules for $\widehat{sl(2)}$ (we omit details):
\begin{itemize}
\item[(1)] $V^{-3/2} (\ell \omega_1)$ is irreducible for every $\ell \ge 0$, hence $$V^{-3/2} (\ell\omega_1)=  L_{\widehat{sl(2)}} (- (\frac{3}{2} + \ell  ) \Lambda_0 +  \ell  \Lambda_1).$$
One can prove  the irreducibility by using the fact  that  the Hamiltonian reduction maps the Weyl module $V^{-3/2} (\ell \omega_1)$  to  an  irreducible module for the Virasoro algebra at central charge $c=-2$. We omit details, since they are similar  to the proof of  \cite[Theorem 5.3]{AKMPP-JJM}.

\item[(2)] The vector space of all singular vector in $V^{-4} (2 \ell \omega_1)$ is spanned by the set 
$ \{ v_{i} \ \vert \  i = 0, \dots, \ell \} $
where $v_{i}$ is the unique (up to a constant) singular vector of $\g$--weight $2i  \omega_1$.
\item[(3)] If $\ell \ge 1$, the maximal submodule of $V^{-4} (2 \ell \omega_1)$  is irreducible, and it is generated by the singular vector $v_{\ell-1}$.
\end{itemize}
\end{proof}

\begin{theorem}   \label{m3-dec} $M_{(3)}$ is a completely reducible $V_{-3/2} (sl(2)) \otimes V_{-4}(sl(2))$--module and the following decomposition holds
\bea  && M_{(3) }=  \bigoplus_{\ell =0} ^{\infty}\left( L_{\widehat{sl(2)}} (- (\frac{3}{2} + \ell  ) \Lambda_0 +  \ell  \Lambda_1) \bigotimes    L_{\widehat{sl(2)}} (- (4 +  2 \ell  ) \Lambda_0 +    2 \ell \Lambda_1)   \label{m3-1} \right).\eea
\end{theorem}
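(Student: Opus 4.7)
\emph{Plan.} The proof will combine the singular-vector construction already provided by Lemma~\ref{podmodul}(2) with a character comparison driven by the $q$-series identity of Kac-Wakimoto. Put $v_{\ell}=(\varphi_{(-1)})^{\ell}\vac\in M_{(3)}$ and let $W_{\ell}\subseteq M_{(3)}$ be the $V_{-3/2}(sl(2))\otimes V_{-4}(sl(2))$--submodule it generates; write $L^{1}_{\ell}:=L_{\widehat{sl(2)}}(-(\tfrac{3}{2}+\ell)\Lambda_{0}+\ell\Lambda_{1})$ and $L^{2}_{\ell}:=L_{\widehat{sl(2)}}(-(4+2\ell)\Lambda_{0}+2\ell\Lambda_{1})$. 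By the notational convention at the start of Section~\ref{analysis-II}, the subalgebra generated by the second $\widehat{sl(2)}$--factor inside $M_{(3)}$ is already simple, equal to $V_{-4}(sl(2))$, so the whole $M_{(3)}$ is a genuine $V_{-3/2}(sl(2))\otimes V_{-4}(sl(2))$--module. Lemma~\ref{podmodul}(2) then yields $W_{\ell}\cong L^{1}_{\ell}\otimes\widetilde L^{2}_{\ell}$ with the first tensor factor irreducible; what has to be shown is (i) each $\widetilde L^{2}_{\ell}$ is in fact irreducible, (ii) the $W_{\ell}$'s sum directly, and (iii) their sum exhausts $M_{(3)}$.

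The first step I would carry out is the computation of the graded character of $M_{(3)}$ directly from its PBW basis as a Weyl vertex algebra on three pairs of fields of conformal weight $\tfrac{1}{2}$. Combining the explicit formulas of Lemma~\ref{karakteri} with the $q$-series identity of \cite[Example 5.2]{KW-1994} (see also \cite[Theorem 4]{Ono}), one obtains the key numerical identity
\[
\mbox{ch}_{q}\,M_{(3)}\ =\ \sum_{\ell\ge 0}\mbox{ch}_{q}\,L^{1}_{\ell}\cdot\mbox{ch}_{q}\,L^{2}_{\ell}.
\]

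Next, since the vectors $v_{\ell}$ lie in pairwise distinct joint $(\widehat{sl(2)}\times\widehat{sl(2)})$--weight spaces, a triangular argument (successively peeling off the $v_{\ell}$ of largest classical $sl(2)$--weight $\ell$) shows that $\bigoplus_{\ell}W_{\ell}\hookrightarrow M_{(3)}$. Because $W_{\ell}$ surjects onto $L^{1}_{\ell}\otimes L^{2}_{\ell}$, one has $\mbox{ch}_{q}\,W_{\ell}\ge\mbox{ch}_{q}\,(L^{1}_{\ell}\otimes L^{2}_{\ell})$ coefficientwise; summing and using the inclusion above together with the character identity gives
\[
\mbox{ch}_{q}\,M_{(3)}\ \ge\ \sum_{\ell\ge 0}\mbox{ch}_{q}\,W_{\ell}\ \ge\ \sum_{\ell\ge 0}\mbox{ch}_{q}\,L^{1}_{\ell}\cdot\mbox{ch}_{q}\,L^{2}_{\ell}\ =\ \mbox{ch}_{q}\,M_{(3)}.
\]
Equality throughout forces $\widetilde L^{2}_{\ell}\cong L^{2}_{\ell}$ for every $\ell$ and $\bigoplus_{\ell}W_{\ell}=M_{(3)}$, which is the required decomposition; complete reducibility follows at once.

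The main obstacle I expect lies in the character identity itself: one must carefully match the three conformal-weight normalizations (the $q^{3/8}$ and $q^{-1/4}$ prefactors of Lemma~\ref{karakteri} and the global grading shift of $M_{(3)}$) and invoke the precise statement of \cite[Example 5.2]{KW-1994} rather than a close variant of it. A secondary, mostly bookkeeping, point is the direct-sum claim (ii); this is clean because the classical $sl(2)\times sl(2)$--highest weights $(\ell,2\ell)$ of the $v_{\ell}$ are pairwise distinct, so in any finite partial sum $\sum_{\ell\le N}W_{\ell}$ the unique joint-dominant vector of largest weight is proportional to $v_{N}$, yielding linear independence by induction on $N$.
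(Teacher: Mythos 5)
Your proposal follows essentially the same route as the paper: both rest on the singular vectors $(\varphi_{(-1)})^{\ell}\vac$ from Lemma~\ref{podmodul}(2), the explicit characters of Lemma~\ref{karakteri}, and the reduction of the equality $\mbox{ch}_q M_{(3)}=\sum_{\ell}\mbox{ch}_q L^{1}_{\ell}\cdot\mbox{ch}_q L^{2}_{\ell}$ to the Kac--Wakimoto identity for $\Delta(q)^6$. The only difference is that you spell out the sandwich/directness argument (which the paper leaves implicit in the phrase ``the proof is now reduced to the following identity''), and your justification of directness is sound once one notes that the $L^{1}_{\ell}$ are pairwise non-isomorphic irreducibles, so the approaches coincide in substance.
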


\begin{proof}
Note that the $q$ character of $M_{(3)}$ is
\bea
\mbox{ch}_q   M_{(3)} &= & q^{-c/24 } \prod_{ n= 1} ^{\infty} (1-q^{n-1/2} )  ^{-6} \nonumber \\
                                   &= & q^{1/8 } \prod_{ n= 1} ^{\infty} (1-q^{n-1/2} )  ^{-6} \  \qquad ( \mbox{since} \ c =-3) \nonumber \\
  &= & q^{1/8}  \left( \frac{ \phi(q)}  {\phi(q^{1/2} )} \right) ^6    
= \ q^{1/8} \frac{1}{\phi(q) ^ 6} \left( \frac{ \phi(q)^2 }  {\phi(q^{1/2} )} \right) ^6     \nonumber \\
  & = & q^{1/8}  \frac{1}{\phi(q) ^ 6}  \Delta  (q^{1/2}) ^ 6 \nonumber 
\eea
where
$$\Delta  (q) = \sum_{ n \in {\Z}_{\ge 0} } q^ { n (n+1) /2} =\frac{  \phi(q^2 ) ^2 }{ \phi(q)}. $$

Using  Lemma  \ref{karakteri}  we get that the $q$-character  of the right side of (\ref{m3-1}) is given by
$$ \frac{q^{1/8} }{\phi(q) ^6} \sum_{\ell=0} ^{\infty} \sum_{i=0} ^{\ell}  ( -1) ^{\ell -i}  (\ell+1) (2 i +1) q^{\frac{ \ell (\ell +2) -i  (i +1)}{2} }.  $$
Therefore  the  proof of the theorem is now reduced to  the following  identity:
\bea  \label{identity} &&  \frac{ \phi(q) ^ {12} } { \phi(q^{1/2}) ^6 } = \sum_{\ell=0} ^{\infty} \sum_{i=0} ^{\ell}  ( -1) ^{\ell -i}  (\ell+1) (2 i +1) q^{\frac{ \ell (\ell +2) -i  (i +1)}{2} }.  \eea

Now we shall see that \eqref{identity} follows from 
the Kac-Wakimoto identity \cite[Example 5.2]{KW-1994}:
$$ \Delta(q) ^6 = -\frac{1}{8} \sum_{(j,k) \in S }  (-1) ^ {\frac{1}{4} (j-1) (k+1)}    (j^ 2 - k^2) q^{\frac{1}{4} (j k -3)},$$
where
 $$S =\{ (j,k)  \ \vert \  j, k , \frac{1}{2} (j -k) \in 2 {\Z}_{\ge 0}  +1, \  j > k \ge 1 \}. $$
For every $(j,k) \in S$, one can see that there are unique $ \ell, i \in {\Z}_{\ge 0} $, $i \le \ell$ such that
$$ j = 2\ell + 2i +  3, \quad k = 2\ell - 2i +1. $$
Hence we have:
\bea  \Delta(q) ^6  &=&  -\frac{1}{8} \sum_{(j,k) \in S }  (-1) ^ {\frac{1}{4} (j-1) (k+1)}    (j^ 2 - k^2) q^{\frac{1}{4} (j k -3) } \nonumber \\
 &=&(  -\frac{1}{8} ) \cdot   2 \cdot 4  (-1) \sum_{\ell=0} ^{\infty} \sum_{i=0} ^{\ell}  ( -1) ^{\ell -i}  (\ell+1) (2 i +1) q^{ \ell (\ell +2) -i  (i +1) }  \nonumber  \\
 &=& \sum_{\ell=0} ^{\infty} \sum_{i=0} ^{\ell}  ( -1) ^{\ell -i}  (\ell+1) (2 i +1) q^{ \ell (\ell +2) -i  (i +1) },  \nonumber   
 %
%
\eea
proving \eqref{identity}.
\end{proof}

\section{ The case  $m=8$ and  an application to conformal embeddings}
\label{m8}
As anticipated in Remark \ref{for11}, we now show that in the case $m=8$ there is a subsingular vector $P^- _{high}$ outside of  $M^{sub}$.   As a consequence,  the decomposition in this case is completely different from    the one in the  classical  (i.e., non-affine)  case. We believe that   a similar pattern will  occurr when $m \ge 8$. \par
We have the following facts.
\begin{itemize}
\item The vertex algebra $M_{(8)}$ is a module for $\widetilde V_{-2} (D_4) \otimes V_{-4} (sl(2))$.
\item $\widetilde V_{-2} (D_4) $ is non-simple, and 
$$\mbox{Com} ( V_{-4} (sl(2)), M_{(8)} ) \supset \widetilde V_{-2} (D_4).$$

\item $( \varphi_{(-1)} ) ^  2 {\bf 1}$ is a singular vector which generates a $\widetilde V_{-2} (D_4) \otimes V_{-4} (sl(2))$--submodule
$ \widetilde L_{\widehat{sl(2)}}  (- 6 \Lambda_0 +2 \Lambda_1) \otimes \widetilde L_{\widehat{so(8)}}  (- 4\Lambda_0 +2 \Lambda_1). $ So
$$ ( \varphi_{(-1)}  )^ 2 {\bf 1} = \widetilde w_{1} \otimes \widetilde w_{2 }, $$
where $\widetilde w_{1}$  (resp. $\widetilde w_{2})$  is a highest weight vector in $  \widetilde L_{\widehat{sl(2)}}  (- 6 \Lambda_0 +2 \Lambda_1) $ (resp. $ \widetilde L_{\widehat{so(8)}}  (- 4\Lambda_0 +2 \Lambda_1)$).

\item $ \widetilde L_{\widehat{sl(2)}}  (- 6 \Lambda_0 +2 \Lambda_1) $ is a certain quotient of the generalized Verma $\widehat {sl(2)}$--module $V^{-4} (2 \omega_1)$ which has a singular vector 
$$v_0=\left(  f(-1)  - \frac{1}{2} h(-1) f(0)   - \frac{1}{2} e(-1) f(0)^2 \right) v_{2 \omega_1} $$ such that
$$ V_{-4} (sl(2)) = U(\widehat {sl(2)}). v_0 \subset V^{-4} (2 \omega_1). $$
 So the vertex algebra $ V_{-4} (sl(2)) $ is embedded into  $V^{-4} (2 \omega_1)$.

\end{itemize}
\vskip10pt
Set  $n=m/2 = 4$ and define
\bea
& b_i ^ + = \frac{ a^+_ i  - \sqrt{-1} a^+_{n+i} }{\sqrt{2}},    & b_i ^ - = \frac{ a^- _ i  +  \sqrt{-1} a^-_{n+i} }{\sqrt{2}}, \nonumber \\
    & b_{n+i} ^ +  = \frac{ a^- _ i  -  \sqrt{-1} a^-_{n+i} }{\sqrt{-2}},& b_{n+i}  ^ - = \frac{ a^+_ i  + \sqrt{-1} a^+_{n+i} }{\sqrt{-2}}. \nonumber 
\eea
Then we have
$$ [ ( b_i ^{\pm})  _{\lambda} b_j ^{\pm}] = 0, \quad [ (b_i ^{+ } )  _{\lambda} ( b_j ^{-} )] = \delta_{i,j}. $$

Define the following vectors:
\bea
P^+ & = &    (  f(-1)   - \frac{1}{2} h(-1) f(0)  - \frac{1}{2} e(-1) f(0)^2  ) (b_1^+ )  ^2 , \nonumber \\
  P^- _{high} & = &  \left( ( b_1 ^+) _{-2}  b_{n+1} ^ + -  ( b_{n+1}  ^+) _{-2}  b_1^ +\right), \nonumber  \\
 P^- _{low} & = &  \left( ( b_1 ^-) _{-2}  b_{n+1} ^ - -  ( b_{n+1}  ^-) _{-2}  b_1^ -\right).\nonumber 
\eea

By direct calculation we have:
\begin{lemma}
 \item[(1)] $ P^- _{high} $  is a  highest weight vector  for $sl(2) \times so(8)$ of weight $(0, 2\omega_1)$. 
\item[(2)]  $P^- _{low}  \in U(so(8)). P^- _{high}$.
\end{lemma}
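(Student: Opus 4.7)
The plan for (1) is to determine the $sl(2) \times so(8)$-weights of $b_1^+$ and $b_{n+1}^+$ directly from the expressions of the Cartan generators used in the proof of Lemma \ref{podmodul}. Both $b_1^+\propto a_1^+ - \sqrt{-1}\,a_{n+1}^+$ and $b_{n+1}^+\propto a_1^- - \sqrt{-1}\,a_{n+1}^-$ have $so(8)$-weight $\omega_1 = \epsilon_1$, but the first lies in the $a^+$-sector (of $sl(2)$-weight $\beta/2$) and the second in the $a^-$-sector (of $sl(2)$-weight $-\beta/2$), so $P^-_{high}$ has weight $(0, 2\omega_1)$. To verify annihilation by the positive roots: the $sl(2)$-raising operator $e$ is (up to a scalar) a sum of $:a_i^+ a_i^+:$, so $[e_{(0)}, b_1^+]=0$ while $[e_{(0)}, b_{n+1}^+]$ is a nonzero scalar multiple of $b_1^+$; the derivation property of $e_{(0)}$ then forces $[e_{(0)}, P^-_{high}]=0$ by cancellation of the two antisymmetric terms. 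For each simple $so(8)$-root vector $E_{\alpha_i}$, the weight $\epsilon_1$ is maximal along its $\alpha_i$-string in the vector representation $L(\omega_1)$, so $E_{\alpha_i,(0)}$ kills $b_1^+$ and $b_{n+1}^+$ individually, hence $P^-_{high}$.

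For (2), I would realize both $P^-_{high}$ and $P^-_{low}$ inside a single two-operator antisymmetric subspace of $M_{(8)}$. Let $\mathcal{V}$ denote the $16$-dimensional conformal weight $1/2$ subspace of $M_{(8)}$ spanned by $\{a_i^{\pm}\}$. The assignment $X \wedge Y \mapsto X_{(-2)} Y - Y_{(-2)} X$ defines an $sl(2) \times so(8)$-equivariant linear injection $\wedge^2 \mathcal{V} \hookrightarrow M_{(8)}$: the modes $X_{(-2)}$ and $Y_{(-1)}$ commute (their bracket vanishes since $-2 + (-1) \ne -1$), and the $0$-modes of the $sl(2)$ and $so(8)$ currents act on $M_{(8)}$ as derivations. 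Under this identification, $P^-_{high}$ corresponds to $b_1^+ \wedge b_{n+1}^+$ and $P^-_{low}$ corresponds to $b_1^- \wedge b_{n+1}^-$, both manifestly nonzero.

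As an $sl(2) \times so(8)$-module, $\mathcal{V} = L_{sl(2)}(1) \otimes L_{so(8)}(\omega_1)$, so the Clebsch--Gordan decomposition yields
\begin{equation*}
\wedge^2\mathcal{V} = \bigl(L_{sl(2)}(2) \otimes L_{so(8)}(\omega_2)\bigr) \oplus \bigl(L_{sl(2)}(0) \otimes (L_{so(8)}(2\omega_1) \oplus L_{so(8)}(0))\bigr).
\end{equation*}
Since neither $L_{so(8)}(\omega_2)$ nor $L_{so(8)}(0)$ admits the weight $\pm 2\omega_1$, a weight count shows that the $(0, 2\omega_1)$ and $(0, -2\omega_1)$ weight spaces of $\wedge^2\mathcal{V}$ are each one-dimensional and are contained in the unique $L_{sl(2)}(0) \otimes L_{so(8)}(2\omega_1)$ summand. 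Therefore $P^-_{high}$ and $P^-_{low}$ span these lines and are, respectively, the highest and lowest weight vectors of the \emph{same} copy of $L_{so(8)}(2\omega_1)$; by (1) this copy coincides with $U(so(8)) \cdot P^-_{high}$, which yields $P^-_{low} \in U(so(8)) \cdot P^-_{high}$. The main step that will require care is the verification of the Clebsch--Gordan weight count together with the $sl(2) \times so(8)$-equivariance of the embedding $\wedge^2\mathcal{V} \hookrightarrow M_{(8)}$.
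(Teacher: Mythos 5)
Your proof is correct, but it is organized quite differently from the paper's, which simply asserts the lemma ``by direct calculation'', i.e.\ by explicitly computing the action of the $sl(2)$ and $so(8)$ generators (the currents $e,h,f$ and $\mathcal E_\alpha$ written out earlier in that section) on the vectors $P^-_{high}$, $P^-_{low}$. Your route replaces that computation with a structural argument: the span $\mathcal V$ of the $a_i^{\pm}$ is the defining $sp(16)$--module $\C^2\otimes\C^8$ restricted to $sl(2)\times so(8)$, the antisymmetrized map $X\wedge Y\mapsto X_{(-2)}Y-Y_{(-2)}X$ is an equivariant injection of $\wedge^2\mathcal V$ into the conformal weight $2$ subspace (injectivity following from the PBW basis of $M_{(8)}$, equivariance from the derivation property of zero modes), and then the decomposition $\wedge^2(\C^2\otimes\C^8)=(S^2\C^2\otimes\wedge^2\C^8)\oplus(\wedge^2\C^2\otimes S^2\C^8)$ together with the fact that $\pm2\epsilon_1$ is not a weight of $L_{so(8)}(\omega_2)$ forces $P^-_{high}$ and $P^-_{low}$ to be the extreme weight vectors of the unique copy of $L_{sl(2)}(0)\otimes L_{so(8)}(2\omega_1)$, which gives both claims at once. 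The weight assignments you use ($b_1^+,b_{n+1}^+$ of $so(8)$--weight $\epsilon_1$ and $sl(2)$--weights $\pm\beta/2$; $b_1^-,b_{n+1}^-$ of weight $-\epsilon_1$) are consistent with the block-diagonal embedding and with the computation of $h_r\cdot\varphi$ already carried out in the proof of Lemma \ref{podmodul}, and the cancellation argument for $e_{(0)}P^-_{high}$ is sound. What your approach buys is conceptual transparency --- it explains \emph{why} $P^-_{low}$ lies in $U(so(8))\cdot P^-_{high}$ (both sit in the same irreducible constituent of $\wedge^2\mathcal V$) rather than exhibiting an explicit chain of lowering operators; what the paper's direct calculation buys is that it needs no auxiliary injectivity or Clebsch--Gordan bookkeeping and produces the explicit expressions that are reused immediately afterwards in the proof of Proposition \ref{prop-new}.
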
 
\begin{proposition} \label{prop-new}We have:
\begin{itemize}
\item[(1)]  $ P^{+}  \ne 0$.
\item[(2)] $ P^+  \in \mbox{Com} ( V_{-4} (sl(2)), M_{(8)} )$; i.e.,
$ (sl(2) \otimes {\C}[t] ). P^ +=0. $
\item[(3)] $P^+ \in \widetilde V_{-2} (D_4)$ and $P ^+$ is a singular vector for $\widehat{so(8)}$ of weight $-4\Lambda_0 + 2\Lambda_1$.
 \item[(4)] $P^- _{high},  P^- _{low} \notin M_{(8)} ^{(sub)}$.
 \end{itemize}
\end{proposition}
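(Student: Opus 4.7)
The proofs of (1), (2), (3) rest on a single structural observation, emphasized in the preamble to the proposition: the vector $v_0 \in V^{-4}(2\omega_1)$ generates a sub-vertex algebra isomorphic to $V_{-4}(sl(2))$, so that $v_0$ effectively plays the role of the vacuum of this subalgebra. Consequently, $v_0$ is annihilated by every mode $x(n)$ with $x \in sl(2)$ and $n \ge 0$: the conditions $e(0)v_0 = 0$ and $x(n)v_0 = 0$ for $n \ge 1$ are the standard affine singular-vector conditions, $h(0)v_0 = 0$ is the zero-$\mathfrak h$-weight condition, and $f(0)v_0 = 0$ follows from $f(0)^3 v_{2\omega_1} = 0$ together with the commutators $[f(0), e(-1)] = -h(-1)$ and $[f(0), h(-1)] = 2 f(-1)$, whose resulting contributions cancel in pairs.

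From this, (2) is immediate: $P^+$ is by construction the image of $v_0$ in the $\widehat{sl(2)}$-submodule of $M_{(8)}$ generated by $(b_1^+)^2 \mathbf{1}$, so the annihilation of $v_0$ by $sl(2) \otimes \CC[t]$ descends to $P^+$. For (3), Proposition~\ref{g-1} identifies the commutant of $V_{-4}(sl(2))$ in $M_{(8)}$ with $\widetilde V_{-2}(D_4)$, giving $P^+ \in \widetilde V_{-2}(D_4)$; the $\widehat{so(8)}$-singular vector property, with weight $-4\Lambda_0 + 2\Lambda_1$, is inherited from $(b_1^+)^2 \mathbf{1}$, which has these properties by Lemma~\ref{podmodul}, because the operator producing $P^+$ involves only $\widehat{sl(2)}$-modes and these commute with $\widehat{so(8)}$ in $M_{(8)}$ (Howe pair).

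For (1), one performs the explicit expansion of $P^+ = (f(-1) - \tfrac12 h(-1) f(0) - \tfrac12 e(-1) f(0)^2)(b_1^+)^2 \mathbf{1}$ in the PBW basis of $M_{(8)}$, using the quadratic-in-$a_i^\pm$ expressions for $e, f, h$ coming from $V_{-4}(sl(2)) \subset V_{-1/2}(sp(8))$; isolating a single monomial whose coefficient is manifestly nonzero yields $P^+ \ne 0$.

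Part (4) is the main point. We first prove $P^-_{high} \notin M^{sub}$; the assertion for $P^-_{low}$ then follows since $M^{sub}$ is $so(8)$-invariant and $P^-_{high} \in U(so(8)).P^-_{low}$ (both generate the same irreducible $so(8)$-module $L_{so(8)}(2\omega_1)$, with $P^-_{high}$ as its highest weight vector). To prove $P^-_{high} \notin M^{sub}$ we carry out a weight-space analysis. By the Sugawara formula the highest-weight conformal weight of the $k$-th summand of $M^{sub}$ equals $k/2$, so on conformal weight grounds only the summands with $k \in \{0, 2, 4\}$ can contribute to the subspace of conformal weight $2$, $sl(2)$-weight $0$, and $so(8)$-weight $2\omega_1$. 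The summand $k = 4$ contributes nothing: the $sl(2)$-module $V(4\omega_1)$ carries no nonzero $sl(2)$-invariants. In the summand $k = 2$, the candidate space is seven-dimensional (three $\widehat{sl(2)}$-descendants of depth $1$ together with the four $\widehat{so(8)}$-Cartan twists of $f(0) \cdot (b_1^+)^2 \mathbf{1}$); imposing $e(0)$-annihilation collapses it to the line spanned by $P^+$, using the uniqueness (up to scalar) of $v_0$ among singular vectors of $V^{-4}(2\omega_1)$ at this depth and $\mathfrak h$-weight. The summand $k = 0$ coincides with $V_{-4}(sl(2)) \otimes \widetilde V_{-2}(D_4)$, and its $sl(2) \times so(8)$-HWVs at the given weight reduce, after separating the tensor factors, to $\mathbf{1} \otimes P^+$. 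Hence every $sl(2) \times so(8)$-HWV of the required weight in $M^{sub}$ is a scalar multiple of $P^+$; an explicit calculation in the $a_i^\pm$-basis then shows $P^-_{high}$ is not proportional to $P^+$, completing (4).

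The main technical obstacle lies in part (4): enumerating the candidate HWVs in each of the summands $k = 0, 2, 4$ and performing the final linear independence check against $P^-_{high}$ require careful but finite bookkeeping in the PBW basis of $M_{(8)}$.
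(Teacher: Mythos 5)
Parts (1)--(3) of your argument are essentially sound and consistent with what the paper does, though the paper gets all three at once by a single explicit computation: it rewrites $P^+$ in the $b_i^{\pm}$ variables and identifies it as $-\frac14\sum_{j=2}^{4}(\mathcal E_{\epsilon_1-\epsilon_j})_{(-1)}(\mathcal E_{\epsilon_1+\epsilon_j})_{(-1)}\vac$, which is visibly in $\widetilde V_{-2}(D_4)$ and is recognized as a nontrivial projection of the known singular vector of $V^{-2}(D_4)$ from \cite[Theorem 3.1]{P-Glasnik}; this simultaneously gives nonvanishing, membership in the commutant, and the $\widehat{so(8)}$-singular property. Your structural derivation of (2) from the vacuum-like nature of $v_0$ (annihilated by all of $sl(2)\otimes\C[t]$, not just by the affine raising operators) and of (3) from Proposition \ref{g-1} is a legitimate alternative.

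The genuine gap is in (4). Your strategy requires proving that the space of $sl(2)\times so(8)$ highest weight vectors of weight $(L_0,h,so(8))=(2,0,2\epsilon_1)$ inside $M^{sub}$ is exactly the line $\C P^+$, and the enumeration you sketch does not establish this. In the $k=2$ summand the depth-one subspace of weight $(0,2\epsilon_1)$ is not seven-dimensional: besides the three $\widehat{sl(2)}$-descendants and the four Cartan twists you list, it contains the vectors $u\otimes(\mathcal E_{\epsilon_1\pm\epsilon_j})_{(-1)}w_{\epsilon_1\mp\epsilon_j}$ for $j=2,3,4$, where $w_{\epsilon_1\mp\epsilon_j}$ are the weight vectors of $L_{so(8)}(2\epsilon_1)$ of weight $\epsilon_1\mp\epsilon_j$ --- six further candidates that survive the $so(8)$-weight constraint and must be fed into the highest-weight conditions. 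Moreover the factors $\widetilde L_{\widehat{sl(2)}}$, $\widetilde L_{\widehat{so(8)}}$ are only known to be some quotients of generalized Verma modules, so even a corrected count of their depth-one weight spaces needs justification, and the one-dimensionality of the resulting HWV space is asserted rather than proved. The paper avoids all of this with a separating-functional argument: writing a hypothetical $P^-_{low}\in M^{sub}$ of conformal weight $2$ as $P^-(0)+P^-(1)+P^-(2)$ according to the summands $k=0,2,4$, one shows via fusion rules and the singular-vector property of $P^+$ that $P^+_{(3)}P^-(i)=0$ for each $i$, whereas a direct computation gives $P^+_{(3)}P^-_{low}=\nu\vac$ with $\nu\ne0$; the statement for $P^-_{high}$ then follows because $P^-_{low}\in U(so(8)).P^-_{high}$ and $M^{sub}$ is $so(8)$-stable. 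I recommend replacing your enumeration by this pairing argument, or else completing the enumeration with the missing descendants and with an identification of the relevant quotient modules.
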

 
 \begin{cor}
$ P^- _{high}$  is a subsingular vector  in $  M_{(8)} ^{(sub)} $  and $ P^-_{high}  + M_{(8)} ^{(sub)}  \in  M_{(8)} / M_{(8)} ^{(sub)}$  generates $V_{-4} (sl(2)) \otimes \widetilde V_{-2} (D_4) $--module isomorphic to
 $ V_{-4} (sl(2)) \otimes   \overline L_{\widehat{so(8)}}  (- 4\Lambda_0 +2 \Lambda_1) , $
where  $\overline L_{\widehat{so(8)}}  (- 4\Lambda_0 +2 \Lambda_1)$ is a highest weight $ \widetilde V_{-2} (D_4) $--module.
 \end{cor}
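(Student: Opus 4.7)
The plan is to show that the image $\bar P^-_{high}$ in the quotient $M_{(8)}/M^{(sub)}_{(8)}$ is a singular vector for $V_{-4}(sl(2))\otimes \widetilde V_{-2}(D_4)$ with weight $(-4\Lambda_0,\,-4\Lambda_0+2\Lambda_1)$, and then to identify the cyclic submodule it generates. That $\bar P^-_{high}\ne 0$ is exactly Proposition~\ref{prop-new}(4), which provides the subsingular, not singular, status in $M_{(8)}$ itself.

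First I would check singularity in the quotient. The finite-level conditions $e_{(0)}P^-_{high}=h_{(0)}P^-_{high}=0$, annihilation by the finite positive roots of $so(8)$, and the $sl(2)\times so(8)$-weight $(0,\,2\omega_1)$ are supplied by the lemma preceding Proposition~\ref{prop-new}. For the strictly positive modes, note that for $x\in sl(2)\cup so(8)$ the vector $x_{(n)}P^-_{high}$ has conformal weight $2-n$, so it vanishes automatically for $n\ge 3$; for $n=2$ the result lies in $\C\vac\subset M^{(sub)}_{(8)}$ (the $k=0$ summand contains the vacuum); for $n=1$ one must check by a direct computation, using the explicit form of the generators of $sl(2)$ and $so(8)$ as bilinears in the $a_i^\pm$ together with the expression $P^-_{high}=\,:(\partial b_1^+)b^+_{n+1}:-\,:(\partial b^+_{n+1})b_1^+:$, that the resulting conformal-weight-$1$ vector lies in $M^{(sub)}_{(8)}$. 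The analogous statement for $f_{(0)}P^-_{high}$ (of $sl(2)$-weight $-2$ and conformal weight $2$) reduces to a direct Wick-type check.

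Next I would identify the cyclic submodule. Since $V_{-4}(sl(2))$ and $\widetilde V_{-2}(D_4)$ commute in $V_{-1/2}(sp(16))\subset M_{(8)}$, they act commutingly on the quotient, and the cyclic module generated by $\bar P^-_{high}$ factorizes as the external tensor product of the two cyclic submodules. On the $\widehat{sl(2)}$-side, $\bar P^-_{high}$ is a weight-$(-4\Lambda_0)$ singular vector and hence generates a quotient of the vacuum Weyl module $V^{-4}(sl(2))$; since the level is non-critical and $V^{-4}(sl(2))=V_{-4}(sl(2))$ is simple, the cyclic submodule is either $V_{-4}(sl(2))$ or $\{0\}$, and the non-vanishing of $\bar P^-_{high}$ rules out the latter. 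On the $\widehat{so(8)}$-side, the cyclic submodule is a highest-weight $\widetilde V_{-2}(D_4)$-module of weight $-4\Lambda_0+2\Lambda_1$, which by definition we name $\overline L_{\widehat{so(8)}}(-4\Lambda_0+2\Lambda_1)$. Combining the two gives the claimed isomorphism.

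The main obstacle is the direct verification in the first step that $x_{(1)}P^-_{high}$ and $f_{(0)}P^-_{high}$ land in $M^{(sub)}_{(8)}$: these targets live in the conformal-weight-$1$ and weight-$2$ parts of $M^{(sub)}_{(8)}$, spanned by the action of the generators of $V_{-4}(sl(2))\otimes \widetilde V_{-2}(D_4)$ on the vacuum together with the low-weight vectors in the $k=1,2$ summands, and the computation must explicitly exhibit the image as such a combination. Once this calculation is done, the module-structure statement is purely formal, depending only on the commutation of the two factors and the simplicity $V^{-4}(sl(2))=V_{-4}(sl(2))$.
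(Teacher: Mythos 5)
Your proposal follows essentially the same route as the paper: the corollary there rests precisely on the preceding Lemma (zero-mode highest-weight property of weight $(0,2\omega_1)$), the displayed computations $e(1)P^-_{high}=(b_1^+)^2$, $f(1)P^-_{high}=-(b_5^+)^2$, $e(2)P^-_{high}=f(2)P^-_{high}=0$, which place all positive-mode images inside $M_{(8)}^{(sub)}$ (since $(b_1^+)^2=(\varphi_{(-1)})^2\vac$ generates the $k=2$ summand), on Proposition \ref{prop-new}(4) for non-vanishing in the quotient, and on the simplicity $V^{-4}(sl(2))=V_{-4}(sl(2))$, exactly the ingredients you assemble. The one step you defer as "the main obstacle" --- that the mode-one images land in $M_{(8)}^{(sub)}$ --- is in fact automatic: the entire conformal-weight-one subspace of $M_{(8)}$ is $(sp(16))_{(-1)}\vac=\k_{(-1)}\vac\oplus\p_{(-1)}\vac$, and both pieces lie in $M_{(8)}^{(sub)}$ (in the $k=0$ summand and, via the zero-mode action of $\k$ on $(\varphi_{(-1)})^2\vac$, in the $k=2$ summand respectively), so no further Wick computation is needed.
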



 \begin{proof}
We recall  the following formulas for generators for  $\widetilde V_{-2} (D_4)$:
 \bea (\mathcal E_{\epsilon_j-\epsilon_i})_{(-1)}\vac &=&\ \ :a^+_ia^-_j:+\sqrt{-1}:a^+_{n+i}a^-_j: -\sqrt{-1}:a^+_{i}a^-_{n+j}:+:a^+_{n +i}a^-_{n+j}:  \nonumber \\ && -:a^+_{j}a^-_{i}:  -\sqrt{-1}:a^+_{j}a^-_{m+i}: +\sqrt{-1}:a^+_{n+j}a^-_{i}:-:a^+_{n+j}a^-_{n+i}: \nonumber  \\
=&&  ( a^+_ i  +\sqrt{-1} a^+_{n+i} ) ( a^-_j   -  \sqrt{-1}  a ^- _{n+j} ) - (  a^+_ j  - \sqrt{-1} a^+_{n+j}     )  (  a^ - _ i  +\sqrt{-1} a^- _{n+i} ) ) \nonumber  \\
=&&  -2  ( b_{n+i}  ^-  b_{n+j }   ^+   +  b_{ j } ^+  b_{i } ^ -   ),  \nonumber 
\eea
\bea (\mathcal E_{\epsilon_i +\epsilon_j} )_{(-1)}\vac  &=& \ \ :a^+_ja^-_i:-\sqrt{-1}:a^+_{n+j}a^-_{i}:-\sqrt{-1}:a^+_{j}a^-_{n+i}:-:a^+_{n+j}a^-_{n+i}:  \nonumber \\ && -:a^+_{i}a^-_{j}:+\sqrt{-1}:a^+_{i}a^-_{n+j}:+\sqrt{-1}:a^+_{n+i}a^-_{j}:+:a^+_{n+i}a^-_{n+j}:
\nonumber  \\
= &&  ( a^+_ j   - \sqrt{-1} a^+_{n+j} ) ( a^-_ i   -  \sqrt{-1}  a ^- _{n+i } ) -  (  a^+_ i    - \sqrt{-1} a^+_{n+i}     )  (  a^ - _ j    - \sqrt{-1} a^- _{n+j } ) ) \nonumber  \\
= && - 2 \sqrt{-1} (b_{j} ^ + b_{n+i } ^ +  - b_{i } ^+  b_{n+j} ^+    ) .\nonumber  
\eea
 The generators of $V_{-4}(sl(2))$ can be expressed as follows:
 \bea
e& =&  \sqrt{-1} \sum_{i=1} ^n b_i ^+ b_{n+i} ^-, \nonumber \\
f& =& - \sqrt{-1} \sum_{i=1} ^n b_i ^- b_{n+i} ^+, \nonumber \\
h &=& - \sum_{i=1} ^n  ( b_i ^+  b_i ^- -b_{n+i} ^+ b_{n+i} ^- ).\nonumber 
\eea
By direct calculation we get
\bea
 P^+  &=&   f(-1) ( (b_1 ^ + )_{(-1)} ) ^ 2 {\vac}  +  e(-1) ( (b_5 ^ + )_{(-1)} ) ^ 2 {\vac} -   \sqrt{-1} h(-1) ( (b_1 ^ + )_{(-1)} ) b_5^ +  \nonumber \\
    &=&  - 3 \sqrt{-1}  (  ( b_1 ^+) _{-2}  b_5^ +  - ( b_5 ^+) _{-2}  b_1^ + )     -  \sqrt{-1} \ \sum_{i=2} ^ 4           (       b_{ 1 } ^+  b_{i } ^ -  + b_{5 }   ^+  b_{4 +i}  ^-     )    (       b_{ 1 } ^+  b_{ 4+ i  } ^ +   - b_{5 }   ^+  b_{i }  ^+     ) \nonumber \\
 &=& - \frac{1}{4} \sum_{i=2} ^{4} ({\mathcal E}_{\varepsilon _1-\varepsilon_i} )_{(-1)} ({\mathcal E}_{\varepsilon _1+\varepsilon  _i} )_{(-1)}  \vac \in \widetilde  V_{-2} (D_4).  \label{formula-D4} \eea
 Note that  (\ref{formula-D4})  gives a non-trivial projection of  the singular vector in $V^{-2} (D_4)$  from \cite[Theorem 3.1]{P-Glasnik} in the case $n=1$, $\ell =4$.

 Assume that $P^- _{low}  \in M_{(m)} ^{(sub)} $. Since it has conformal  weight $2$ we should have
 $$ P^- _{low}  = P^-   { (0)} + P^- (1) + P ^- (2),$$
 where 
 \bea
 P^- (0) & \in &V_{-4} (sl(2)) \otimes  \widetilde V_{-4} (D_4)   \nonumber \\
  P^- (1) & \in & \widetilde L_{\widehat{sl(2)}}  (- 6 \Lambda_0 +2 \Lambda_1) \otimes \widetilde L_{\widehat{so(8)}}  (- 4\Lambda_0 +2 \Lambda_1),   \nonumber \\
    P^- (2) & \in &  \widetilde L_{\widehat{sl(2)}}  (- 8 \Lambda_0 +4 \Lambda_1) \otimes \widetilde L_{\widehat{so(8)}}  (- 6\Lambda_0 +4 \Lambda_1).   \nonumber \eea

  By using fusion rules and the  fact that $P ^+$ is singular vector in  $\widetilde V_{-4} (D_4) $ one easily sees that 
 $ P_{(3)} ^+ P^- (i)=  0$ for $i=0,1,2$. But 
 $$P_{(3)} ^+ P^- _{low} = \nu \vac  \quad \nu   \ne 0, $$
 a contradiction. 
This  proves that $P ^- _{low} \notin M_{(m)} ^{(sub)} $. \end{proof}

 Note that $P^- _{high} $ is subsingular for  $\widehat{sl(2)}$:  
 \bea
 e(0) P^- _{high} &=&  0, \nonumber \\
 e(1) P^- _{high} &=& (b_1 ^+ ) ^2,  \nonumber \\
 e(2) P ^- _{high}&= & 0,\nonumber \\
  f(0) P^- _{high} &=&  0, \nonumber \\
 f(1) P^-_{high} &=& -  (b_5 ^+ ) ^2,  \nonumber \\
 f(2) P ^- _{high} &= & 0.\nonumber 
 \eea
 So $P^-_{high},  P^-_{low}\notin \mbox{Com} (V_{-4}(sl(2)), M_{(8)})$.

  Finally we conclude this section with one observation which gives an argument why our subsingular vectors do not appear in the classical Howe setting.
 
\begin{rem} \label{observation}
By taking suitable conformal vector, one can realize the Zhu's algebra of  the Weyl vertex algebra $M_{(m)}$ as  the classical Weyl algebra $A_m$ with generators
$x_i$, $\frac{\partial}{\partial x_i }$ and commutation relation $[ \frac{\partial}{\partial x_i }, x_j] = \delta_{i,j}$.
It is easy to see that the Zhu's functor maps subsingular vector   $P^- _{high}$ to zero. In our opinion this explain why subsingular vectors which do appear in our analysis, do not appear in classical settings.
\end{rem}

 \subsection{An application to conformal embeddings  }
 Here we want to study conformal embedding  $A_3 \times Z \hookrightarrow  D_4$, $Z$ $1$--dimensional abelian,  at level $k$,  and prove that the subalgebra of $V_{-2} (D_4)$ generated by $A_3$ is simple.
 
 Note first  that in previous sections appeared two vertex algebras associated to $A_3$: $\overline{V}_{-2} (A_3) $ and $ \widetilde V_{-2} (D_3)= \widetilde V_{-2} (A_3)$. Recall that 
 $\overline{V}_{-2} (A_3)  $ is  the vertex subalgebra  of $ V_{-1} ( sl(8) ) \subset V_{-1/2}(C_8)$ generated by 
the elements  of $sl(4) $ in the conformal embedding  $sl(2) \times sl(4) \hookrightarrow  sl(8) $ at $k=-1/2$. On the other hand  $\widetilde V_{-2} (D_3)$ is a subalgebra of  $ V_{-1/2}(C_6)$ generated by the elements of $so(6)$  in the conformal embedding  $sl(2) \times so (6) \hookrightarrow  sp(12) $ at $k=-1/2$.

  Since  $\widetilde V_{-2} (D_3)$ is simple by Theorem  \ref{thm-simplicity}  and  $\overline{V}_{-2} (A_3) $ is not simple by Example  \ref{ex-non-simp} we conclude that  $\overline{V}_{-2} (A_3) \ne   V_{-2} (A_3)$.

  \begin{lemma}
The vertex subalgebra of $V_{-2} (D_4)$ generated by the elements in $A_3$ is simple.
 \end{lemma}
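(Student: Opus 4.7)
The plan is to adapt the argument used in Theorem \ref{thm-simplicity}(2) for the simplicity of $\widetilde V_{-2}(D_3)$. Let $\overline V$ denote the vertex subalgebra of $V_{-2}(D_4)$ generated by the image of the $A_3$ factor; as a quotient of $V^{-2}(A_3)=V^{-2}(D_3)$, its simplicity is equivalent to $\overline V\cong V_{-2}(A_3)$. By \cite[Theorem 8.2]{ArM-II} in the case $A_3=D_3$, the maximal ideal of $V^{-2}(A_3)$ is generated by a unique singular vector $v_{sing}$ of conformal weight $2$, so the task reduces to showing $v_{sing}=0$ in $\overline V$.

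To achieve this vanishing I would exploit the chain of Levi embeddings $A_2\times Z'\subset A_3\times Z\subset D_4$, where $Z,Z'$ are one-dimensional center factors of successive parabolic Levis. Theorem \ref{global}(3) supplies two conformal embeddings at level $-2$: the embedding $V_{-2}(A_2)\otimes M(1)\hookrightarrow V_{-2}(A_3)$ (which arises from $\widetilde V(-(n+1)/2, A_h\times A_{n-h-1}\times Z)\hookrightarrow V_{-(n+1)/2}(A_n)$ with $n=3$, $h=2$) and $\widetilde V(-2, A_3\times Z)\hookrightarrow V_{-2}(D_4)$. First I would verify, by a direct AP-criterion computation analogous to those of Section 3, that the composite Levi embedding $A_2\times Z'\times Z\hookrightarrow D_4$ is also conformal at $-2$, yielding $\omega_{D_4}=\omega_{A_2}+\omega_{Z'}+\omega_Z$ in $V_{-2}(D_4)$. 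Subtracting from the ambient identity $\omega_{D_4}=\omega_{A_3}+\omega_Z$ provided by the conformal embedding of $A_3\times Z$ then yields the Sugawara identity $\omega_{A_3}=\omega_{A_2}+\omega_{Z'}$ in $V_{-2}(D_4)$, hence in $\overline V$.

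Now the AP-criterion applied to $A_2\times Z'\subset A_3$ at level $-2$ ensures that $\omega_{A_3}-\omega_{A_2}-\omega_{Z'}$ is a conformal-weight-$2$ singular vector lying in the maximal ideal of $V^{-2}(A_3)$; by uniqueness of $v_{sing}$ at this weight, it must be a nonzero scalar multiple of $v_{sing}$. Its vanishing in $\overline V$ therefore forces $v_{sing}=0$ in $\overline V$, so $\overline V\cong V_{-2}(A_3)$ is simple.

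The main obstacle I anticipate is verifying the composite conformal embedding $A_2\times Z'\times Z\hookrightarrow D_4$ at $-2$, which is not directly listed in Theorem \ref{global}(3). This is a finite AP-criterion computation based on the decomposition of the nilradical of the relevant parabolic of $D_4$ as an $A_2\times Z'\times Z$-module together with its Casimir eigenvalues, entirely in the spirit of the verifications carried out throughout Section 3.
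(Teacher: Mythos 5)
Your route is genuinely different from the paper's. The paper argues via the explicit quadratic vector $P^+=-\frac14\sum_{i=2}^4(\mathcal E_{\epsilon_1-\epsilon_i})_{(-1)}(\mathcal E_{\epsilon_1+\epsilon_i})_{(-1)}\vac$, which is a nonzero singular vector of $\widetilde V_{-2}(D_4)$, and checks by direct computation that $w=(\mathcal E_{-\epsilon_1-\epsilon_4})_{(0)}P^+$ is a nontrivial projection of the singular vector of \cite[Theorem 8.2]{ArM-II} generating the maximal ideal of $V^{-2}(A_3)$; since $w$ lies in the ideal generated by $P^+$, it vanishes in $V_{-2}(D_4)$ and the claim follows. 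Your substitute --- Sugawara identities coming from the nested conformal Levi embeddings $A_2\times Z'\times Z\subset A_3\times Z\subset D_4$ at level $-2$ --- is a viable alternative, and the composite embedding that you flag as the main obstacle does in fact pass the AP-criterion: the $Z'$-charges on the two irreducible $A_2$-constituents of $\bigwedge^2\C^4$ are half the $Z'$-charge on the complement of $\mathfrak{gl}(3)$ in $sl(4)$, and since the abelian contribution to \eqref{numcheck} is quadratic in the charge, each component of the orthocomplement again sums to $1$. So the scheme $\omega_{A_3}-\omega_{A_2}-\omega_{Z'}=0$ in $V_{-2}(D_4)$ can be carried out.

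The genuine error is in your final identification. The vector $u=\omega_{A_3}-\omega_{A_2}-\omega_{Z'}$ has $\h$-weight zero, so it is neither a singular vector nor a scalar multiple of $v_{sing}$: the remark after the AP-criterion only asserts that $u$ lies in the maximal ideal of $V^{-2}(A_3)$, and $v_{sing}$ carries a nonzero $A_3$-weight (it generates a nontrivial irreducible $A_3$-module in conformal weight $2$), so ``uniqueness of the singular vector at conformal weight $2$'' cannot be invoked for the weight-zero, non-singular element $u$. The step is repairable: the conformal-weight-$2$ subspace of the maximal ideal of $V^{-2}(A_3)$ is exactly $U(A_3)\cdot v_{sing}$, an irreducible $A_3$-module, and $u$ is a nonzero element of it ($u\neq0$ in $V^{-2}(A_3)$ because the two Casimir tensors involved are linearly independent in $S^2(sl(4))$). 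Since the kernel of $V^{-2}(A_3)\to\overline V$ is an ideal, hence $A_3$-stable, killing $u$ forces it to contain all of $U(A_3)\cdot v_{sing}$ and in particular $v_{sing}$, which is the conclusion you need. With that correction, and with the composite AP-verification actually written out, your argument closes; what it buys over the paper's proof is that it avoids the explicit manipulation of $P^+$, at the cost of an extra conformal-embedding check and of relying on the irreducibility of the weight-$2$ piece of the maximal ideal rather than on an explicit formula for its generator.
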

 \begin{proof}
By direct calculation one sees that
 \bea && w= (\mathcal E_{-\epsilon_1-\epsilon_4})_{(0)} P^+ \in  \overline{V}_{-2} (A_3) \label{max-a3} \eea
 is a non-trivial projection of the singular vector in $V^{-2} (A_3)$ from \cite[Theorem 8.2]{ArM-II}.  
 Since vector $w$  generates the maximal ideal of $\overline{V}_{-2} (A_3)$, now relation (\ref{max-a3}) shows that $w$ belongs to the maximal ideal in  $\widetilde V_{-2} (D_4)$, and hence we get a non-vanishing homomorphism
 $V_{-2} (A_3) \rightarrow V_{-2}(D_4)$. The claim follows.
 \end{proof}

Thus we conclude:
 \begin{cor} \label{cor-non-simp-2} We have:
 \begin{itemize}
\item[(1)] $\overline{V}_{-2} (A_3) \otimes M(1) $ is  conformally embedded into $\widetilde V_{-2} (D_4)$, but it is not embedded into $V_{-2}(D_4)$.
\item[(2)]  $ {V}_{-2} (A_3) \otimes M(1) $ is  conformally embedded  into $V_{-2} (D_4)$.  
\end{itemize}
\end{cor}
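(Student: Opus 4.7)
My plan is to deduce part (1) directly from Theorem \ref{thm-simplicity}(1) specialized to $n=4$, which delivers the conformal embedding
$$\overline V_{-2}(A_3) \otimes M(1) \hookrightarrow \widetilde V_{-2}(D_4).$$
The negative half of part (1) — that this embedding does not transport to the simple quotient $V_{-2}(D_4)$ — will reduce to a simplicity mismatch. Example \ref{ex-non-simp} specialized to $n=2$, $m=4$ shows that $\overline V_{-2}(A_3)$ is not simple (the non-simple quotient $\mathcal W_{-2}$ there coincides with $\overline V_{-2}(A_3)$). On the other hand, the Lemma immediately preceding the Corollary shows that the vertex subalgebra of $V_{-2}(D_4)$ generated by the elements of $A_3$ is simple, hence isomorphic to $V_{-2}(A_3)$. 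Therefore any vertex algebra morphism $\overline V_{-2}(A_3) \to V_{-2}(D_4)$ that is the canonical inclusion on generators must factor through the simple quotient $V_{-2}(A_3)$ and thus kill the nontrivial maximal ideal of $\overline V_{-2}(A_3)$; this rules out any embedding.

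For part (2), I would descend the conformal embedding from part (1) through the quotient $\widetilde V_{-2}(D_4) \twoheadrightarrow V_{-2}(D_4)$. The preceding Lemma already identifies a copy of $V_{-2}(A_3)$ as the subalgebra of $V_{-2}(D_4)$ generated by the elements of $A_3$. The Heisenberg $M(1)$ corresponds to the central direction $Z$ of the Levi $A_3 \times Z \subset D_4$; its generator is the field attached to a Cartan element of $D_4$, which remains nonzero (and strongly commutes with $A_3$) in the simple quotient because Cartan elements embed in $V_{-2}(D_4)$ via $x \mapsto x_{(-1)}\mathbf{1}$, producing a genuine Heisenberg subalgebra at a non-critical level. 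These two subalgebras jointly embed $V_{-2}(A_3) \otimes M(1)$ into $V_{-2}(D_4)$. The conformal condition follows by pushing the equality $\omega_{\overline V_{-2}(A_3) \otimes M(1)} = \omega_{\widetilde V_{-2}(D_4)}$ (obtained in part (1)) through the quotient map, since quotients of affine vertex algebras respect the Sugawara vector.

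The principal obstacle is verifying that the Heisenberg subalgebra survives intact in $V_{-2}(D_4)$, i.e., that its generator does not lie in the maximal ideal of $\widetilde V_{-2}(D_4)$. This reduces to the non-vanishing of a specific Cartan field at level $-2$, which can be made concrete using the realization inside $M_{(8)}$ provided by the conformal embedding $sl(2) \times D_4 \subset C_8$ at level $-1/2$: there the Cartan elements of $D_4$ are manifestly nonzero bilinears in the bosonic generators $a_i^\pm$, so the image of $M(1)$ in the simple quotient is nonzero and the two-point function, controlled by the invariant form on $D_4$, is preserved.
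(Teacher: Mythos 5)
Your proposal is correct and follows essentially the same route as the paper: the positive half of (1) is Theorem \ref{thm-simplicity}(1) with $n=4$, the negative half combines the non-simplicity of $\overline{V}_{-2}(A_3)$ from Example \ref{ex-non-simp} with the simplicity of the $A_3$-generated subalgebra of $V_{-2}(D_4)$ from the preceding lemma, and (2) descends through the quotient exactly as the paper intends. Your extra check that the Heisenberg generator survives in the simple quotient is a harmless elaboration of a point the paper leaves implicit.
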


\vskip3pt
 \footnotesize{
  \noindent{\bf D.A.}:  Department of Mathematics, Faculty of Science, University of Zagreb, Bijeni\v{c}ka 30, 10 000 Zagreb, Croatia;\newline
{\tt adamovic@math.hr}
  
\noindent{\bf V.K.}: Department of Mathematics, MIT, 77
Mass. Ave, Cambridge, MA 02139;
{\tt kac@math.mit.edu}

\noindent{\bf P.MF.}: Politecnico di Milano, Polo regionale di Como,
Via Valleggio 11, 22100 Como,\newline
Italy; {\tt pierluigi.moseneder@polimi.it}

\noindent{\bf P.P.}: Dipartimento di Matematica, Sapienza Universit\`a di Roma, P.le A. Moro 2,
00185, Roma, Italy;\newline {\tt papi@mat.uniroma1.it}

\noindent{\bf O.P.}:  Department of Mathematics, Faculty of Science, University of Zagreb, Bijeni\v{c}ka 30, 10 000 Zagreb, Croatia;\newline
{\tt perse@math.hr}
}

\end{document}